\newcommand{\Hmm}[1]{\leavevmode{\marginpar{\tiny%
$\hbox to 0mm{\hspace*{-0.5mm}$\leftarrow$\hss}%
\vcenter{\vrule depth 0.1mm height 0.1mm width \the\marginparwidth}%
\hbox to 0mm{\hss$\rightarrow$\hspace*{-0.5mm}}$\\\relax\raggedright
#1}}}
\numberwithin{equation}{section}
\newtheorem{Thm}{Theorem}[section]
\newtheorem{Rmk}[Thm]{Remark}
\newtheorem{Cor}[Thm]{Corollary}
\newtheorem{Lem}[Thm]{Lemma}
\newtheorem{Pro}[Thm]{Proposition}
\theoremstyle{definition}
\newtheorem{Def}[Thm]{Definition}
\newcommand{\bel}[1]{\begin{equation}\label{#1}}
\newcommand{\be}{\begin{equation}}
\newcommand{\ba}{\begin{eqnarray}}
\newcommand{\ea}{\end{eqnarray}}
\newcommand{\qe}{\end{equation}}
\newcommand{\R}{{\mathbb R}}
\newcommand{\Z}{{\mathbb Z}}
\newcommand{\C}{{\mathbb C}}
\newtheorem{thesis}{Thesis}
\newcommand{\btl}[1]{\begin{thesis}\label{#1}}
\newcommand{\et}{\end{thesis}}
\theoremstyle{theorem}
\theoremstyle{corollary}
\theoremstyle{lemma}
\theoremstyle{definition}
\theoremstyle{proof}
\theoremstyle{remark}
\newcommand\blfootnote[1]{%
  \begingroup
  \renewcommand\thefootnote{}\footnote{#1}%
  \addtocounter{footnote}{-1}%
  \endgroup
}
\date{}
\begin{document}

\title[Cheeger inequalities for magnetic Laplacians]{Frustration index
  and Cheeger inequalities for discrete and continuous magnetic
  Laplacians} \author{Carsten Lange} \address{Fachbereich f\"{u}r
  Mathematik und Informatik, Freie Universit\"{a}t Berlin, D-14195
  Berlin, Germany and Fakult\"at f\"ur Mathematik, Technische
  Universit\"at M\"unchen,  D-85748 Garching, Germany}
\email{clange@math.fu-berlin.de}

\author{Shiping Liu}
\address{Department of Mathematical Sciences, Durham University, DH1 3LE Durham, United Kingdom}
\email{shiping.liu@durham.ac.uk}

\author{Norbert Peyerimhoff}
\address{Department of Mathematical Sciences, Durham University, DH1 3LE Durham, United Kingdom}
\email{norbert.peyerimhoff@durham.ac.uk}

\author{Olaf Post}
\address{Fachbereich IV-Mathematik, Universit\"{a}t Trier, D-54286 Trier, Germany}
\email{olaf.post@uni-trier.de}
\blfootnote{2010 Mathematics Subject Classification 05C50 (35P15, 58J50).}
\begin{abstract}
  We discuss a Cheeger constant as a mixture of the frustration index
  and the expansion rate, and prove the related Cheeger inequalities and
  higher order Cheeger inequalities for graph Laplacians with cyclic
  signatures, discrete magnetic Laplacians on finite graphs and magnetic
  Laplacians on closed Riemannian manifolds. In this
  process, we develop spectral clustering algorithms for partially
  oriented graphs and multi-way spectral clustering algorithms
  via metrics in lens spaces and complex projective spaces. As a
  byproduct, we give a unified viewpoint of Harary's structural
  balance theory of signed graphs and the gauge invariance of magnetic
  potentials.

 \smallskip
\noindent \keywordsname:  frustration index; magnetic Laplacian; Cheeger's inequality; lens space; complex projective space; gauge transformation; magnetic potential; coarea formula; mixed graph.

\end{abstract}

\maketitle

\section{Introduction}

Cheeger's inequality is one of the most fundamental and important
estimates in spectral geometry. It was first proved by Cheeger for the
Laplace-Beltrami operator on a Riemannian manifold
\cite{Cheeger1970} and later extended to the setting of discrete
graphs, see e.g., \cite{Alon1986,AM1985,Dodziuk1984,BKW12},
demonstrating the close relationship between the spectrum and the geometry
of the underlying space. This inequality has a tremendous impact in
discrete and continuous theories and is an important intersection
point for interactions between both communities.
For example, it stimulated research in discrete mathematics such as
spectral clustering algorithms for data mining \cite{Luxburg07}, or
the construction of expander graphs \cite{HLW06}. Cheeger inequalities have also been considered on metric graphs,
see, e.g., \cite{Nic87} and, using a coarea formula in the proof,
\cite{Po09}. We recently witness several fruitful interactions in the other
direction: Lee, Oveis Gharan and Trevisan's higher order Cheeger
inequalities \cite{LOT2013} on finite graphs were used by Miclo
\cite{Miclo2013} to prove that hyperbounded, ergodic, and self-adjoint
Markov operators admit a spectral gap, solving a $40$-year-old conjecture
of Simon and H{\o}egh-Krohn \cite{SH1972}. For further developments, see
\cite{Liu13,Wang2014}.
Another example is an improved Cheeger's inequality for finite graphs
by Kwok et al. \cite{KLLGT2013}, which was subsequently used to
establish an optimal dimension-free upper bound of eigenvalue ratios
for weighted closed Riemannian manifolds with nonnegative Ricci
curvature \cite{Liu14} (see also \cite{LP14}). This answers open questions of
Funano and Shioya \cite{Funano2013,FS2013}.

Spectral theory of discrete and continuous magnetic Laplacians
attracted a lot of attention and literature on this subject developed
rapidly, see, e.g.,
\cite{Shigekawa87,Sunada93,LiebLoss1993,Shubin94,Erdos96,Paternain01,Shubin01,FLM08,ColindeVTT11,MT12,HinzTeplyaev13,Golenia14}.
Shigekawa proved the following comparison result in
\cite{Shigekawa87}: the least eigenvalue of the magnetic Laplacian on
a closed Riemannian manifold is bounded from above by the least
eigenvalue of a related Schr\"{o}dinger operator.  He also proved
Weyl's asymptotic formula for magnetic Laplacians. Paternain
\cite{Paternain01} obtained an upper bound of the least eigenvalue in
terms of the so-called harmonic value and Ma\~{n}\'{e}'s critical
value of the corresponding Lagrangian. On finite planar graphs, Lieb and Loss
\cite{LiebLoss1993} solved physically motivated extremality problems for eigenvalue expressions of the discrete magnetic Laplacian.



In this paper, we discuss a definition of Cheeger constants (Definitions \ref{def:Cheeger consant discrete},
\ref{def:Cheeger constant higher discrete} and \ref{def:Cheeger constant manifold}) reflecting the nontriviality of the magnetic potentials in terms of
the frustration index (see Definitions \ref{def:frustration index} and
\ref{def:frustration manifold}) and the global connectivity of the
underlying space. This definition works for both discrete and continuous
magnetic Laplacians, and graph Laplacians with $k$-cyclic signatures
($k \in {\mathbb N}$).
Recall that discrete magnetic Laplacians can be considered as graph
Laplacians with a $U(1)$-signature. We would like to point out that
our definition of Cheeger constants provides invariances under switching
operations (Definition \ref{def:switching}) or gauge transformations
(equation (\ref{eq:gaugeTrans})). Furthermore, we prove the
corresponding Cheeger inequalities and higher order Cheeger
inequalities (Theorems \ref{thm:CheegerInequality},
\ref{thm:CheegerInequalityU(1)}, \ref{thm:HigherOrder
  CheegerInequality}, \ref{thm:CheegerManifold}, and
\ref{thm:higherManifold}).  We notice that our Theorem
\ref{thm:CheegerInequalityU(1)}, the Cheeger inequality for discrete
magnetic Laplacian, overlaps with a Cheeger inequality of Bandeira,
Singer and Spielman \cite[Theorem 4.1]{BSS13} in the framework of
graph connection Laplacian \cite{SingerWu}. See Remark \ref{rmk:BSS}
for a more detailed explanation. It is known in physics that ``a
magnetic field raises the energy'' \cite{LiebLoss1993}. Roughly
speaking, our estimates tell us that a magnetic field raises the
energy via raising the frustration index. We focus on finite graphs
and compact Riemannian manifolds in this paper.

Cheeger inequalities are essentially coarea inequalities. In the
proof, we obtain in particular coarea inequalities related to the
frustration index on graphs as well as on manifolds
(Lemmata~\ref{lemma:coarea} and \ref{lemma:coarea manifold}).

In fact, we were led to our Cheeger constant definition by an
investigation of graph Laplacians with $k$-cyclic signatures, aiming
at extending a previous spectral interpretation \cite{AtayLiu14} of
Harary's structural balance theory \cite{Harary53,Harary59} for graphs
with $(\pm 1)$-signatures. It turns out that the Cheeger inequalities
for graph Laplacians with $k$-cyclic signatures and their proofs
provide spectral clustering algorithms for partially oriented graphs
(alternatively called mixed graphs without loops and multiple edges
\cite{HararyPalmer66,ZhangLi02,Ries07,SadeghiLauritzen14}), aiming at
detecting interesting substructures. A partially oriented graph may
contain both oriented and unoriented edges. In the proof of such
inequalities, we develop a random $k$-partition argument, which is
algorithmic (see Lemma \ref{lemma:key} and Proposition
\ref{prop:clusteringImodify}). Recall that, in the setting of $(\pm 1)$-signed
graphs (i.e., $k=2$), the eigenfunctions are real valued and a bipartition
of the underlying graph can be given naturally according to the sign
of the eigenfunction. But here we have complex valued
eigenfunctions. Hence we do not have any natural $k$-partitions. That
is why new ideas are needed. The generally non-symmetric graph
Laplacians of partially oriented graphs are hardly useful for the
purpose of spectral clustering. Our idea is to associate to a
partially oriented graph and a natural number $k \in {\mathbb N}$ an
unoriented graph with a special $k$-cyclic signature. We then perform
spectral clustering algorithms employing eigenfunctions of the graph
Laplacian with the associated signature. According to our Cheeger
constant definition, we can obtain interesting $k$-cyclic
substructures. See Section \ref{section:Direct graphs} for details.

To prove higher order Cheeger inequalities, we develop new multi-way spectral clustering
algorithms using metrics on \emph{lens spaces} and \emph{complex projective spaces}. This
provides a deeper understanding of earlier spectral clustering algorithms via metrics on
real projective spaces presented in \cite{Liu13} and \cite{AtayLiu14}.
These clustering algorithms were initially designed to find almost bipartite subgraphs
of a given graph, \cite{Liu13}, and then extended to find almost balanced subgraphs
of a signed graph, \cite{AtayLiu14}. While all operators studied in \cite{Liu13,AtayLiu14}
are bounded, we show that finding proper metrics for clustering is also useful for unbounded
operators: the spectral clustering algorithms via metrics on complex projective spaces
are crucial to prove the higher order Cheeger inequalities of the magnetic Laplacian on
a closed Riemannian manifold (Lemma \ref{lemma:localization manifold}).

\medskip
The paper is organized as follows. In Section \ref{section:basics}, we set up
notation for the discrete setting and recall basic spectral theory of
related graph operators. In Section \ref{section:Cheeger constant}, we define
the frustration index and the (multi-way) Cheeger constants. We prove the
corresponding Cheeger's inequality in Section \ref{section:Cheeger inequality}
and higher order Cheeger inequalities in Section \ref{section:higher Cheeger}.
In Section \ref{section:Direct graphs}, we discuss applications of Cheeger
inequalities for spectral clustering on partially oriented graphs. In Section
\ref{section:manifold case}, we extend the results developed on
discrete graphs to magnetic Laplacians on closed Riemannian manifolds.

\section{Notations and basic spectral theory}\label{section:basics}

Throughout the paper, $G=(V,E)$ denotes an undirected simple finite
graph on $N$ vertices with vertex set $V$ and edge set $E$. We denote
edges of $G$ by $\{u,v\}$, and $u \sim v$ means that $u \in V$ and $v
\in V$ are connected by an edge. For any subset $\widetilde V\subseteq V$,
let $\widetilde G=(\widetilde V,\widetilde E)$ be the subgraph of~$G$
induced by~$\widetilde V$, that is, an edge $\{ u,v\}$ of~$\widetilde G$
is an edge of~$G$ with $u,v\in \widetilde V$. We tacitly associate to every
edge $e=\{u,v\} \in E$ a positive symmetric weight $w_{uv} = w_{vu} = w_e$
and define the weighted degree $d_u$ of a vertex $u\in V$ by
$d_u:=\sum_{v,v\sim u}w_{uv}$. For a positive measure $\mu: V \to
{\mathbb R}^+$ on $V$, we define the \emph{maximal $\mu$-degree} of the graph
$G$ as
\begin{equation}
  d_{\mu}:=\max_{u\in V}\left\{\frac{\sum_{v,v\sim u}w_{uv}}{\mu(u)}\right\}
  = \max_{u\in V}\left\{\frac{d_u}{\mu(u)}\right\}.
\end{equation}
Henceforth we always consider weighted graphs, unless stated
otherwise, but refer to them simply as graphs. We denote by $e=(u,v)$
the oriented edge starting at $u$ and terminating at $v$, and by $\bar
e=(v,u)$ the oriented edge with the reversed orientation. Let
$E^{or}:=\{(u,v), (v,u)\mid \{u,v\}\in E\}$ be the set of all oriented
edges.

\begin{Def} Let $G$ be a graph and $\Gamma$ be a group. A \emph{signature} of
  $G$ is a map $s: E^{or} \to \Gamma$ such that
  \begin{equation}\label{eq:signatureKEY}
    s(\bar e)=s(e)^{-1},
  \end{equation}
  where $s(e)^{-1}$ is the inverse of $s(e)$ in $\Gamma$. The \emph{trivial
  signature} $s \equiv 1$, where $1$ stands for the identity element of $\Gamma$, is denoted by $s_1$. For an oriented edge
  $e=(u,v) \in E^{or}$, we will also write $s_{uv}:=s(e)$ for
  convenience.
\end{Def}

For $k \in {\mathbb N}$, we use the standard combinatorial notation
$[k] = \{1,2,\dots,k\}$. In this paper, we will restrict ourselves
to the case that the signature group $\Gamma$ is the cyclic group
$S_k^1:=\{\xi^{j} \mid j \in [k] \}$ of order $k$, generated by
the primitive $k$-th root of unity $\xi:=e^{2\pi i/k}\in \mathbb{C}$,
and the case that $\Gamma$ is the unitary group
$U(1) = \{ z \in {\mathbb C} \mid |z| = 1 \}$. The notation $S_k^1$
emphasizes the fact that the elements in $S_k^1$ lie on the unit circle.

We consider the following Laplacian $\Delta_{\mu}^s$ associated to the
weighted graph $(G,w)$ with signature $s: E^{or} \to \Gamma$ and
vertex measure $\mu: V \to {\mathbb R}^+$. For any function $f: V\to
\mathbb{C}$, and any vertex $u\in V$, we have
\begin{equation} \label{eq:Deltasmu}
\Delta_{\mu}^sf(u):=\frac{1}{\mu(u)}\sum_{v,v\sim u}w_{uv}(f(u)-s_{uv}f(v)).
\end{equation}
Note that the summation in \eqref{eq:Deltasmu} over the vertices $v$
adjacent to $u$ can also be understood as a summation over the
oriented edges $e=(u,v) \in E^{or}$, and the signature is evaluated at
$(u,v)$.

The Laplacian $\Delta_{\mu}^s$ has the following decomposition
$$\Delta_{\mu}^s=(D_{\mu})^{-1}(D-A^s)$$
where $D$ and $D_{\mu}$ are the diagonal matrices with $D_{uu}=d_u$
and $(D_{\mu})_{uu}=\mu(u)$ for all $u\in V$ while $A^s$ is the
(weighted) signed adjacency matrix with
$$
A^s_{uv} := \begin{cases}
			0,				& \text{$u=v$ or $\{u,v\}\not\in E$},\\
			w_{uv}s_{uv},	& \text{$\{u,v\}\in E$.}
\end{cases}
$$
When $\Gamma=S_k^1$, we call this operator the
graph Laplacian with the $k$-cyclic signature. When $\Gamma=U(1)$,
this is the discrete magnetic Laplacian studied in Sunada
\cite{Sunada93} (see also Shubin \cite{Shubin94}.)  By
(\ref{eq:signatureKEY}), the matrix $\Delta^s_{\mu}$ is Hermitian, and
hence all its eigenvalues are real which can be listed with
multiplicity as follows:
\begin{equation}
  0\leq \lambda_1(\Delta_{\mu}^s)\leq \lambda_2(\Delta_{\mu}^s)\leq\cdots\leq\lambda_N(\Delta_{\mu}^s)\leq 2d_{\mu}.
\end{equation}

For any two functions $f, g: V\to \mathbb{C}$, we define their inner
product as
\begin{equation}
  \langle f, g\rangle_{\mu}:=\sum_{u\in V}f(u)\overline{g(u)}\mu(u).
\end{equation}
It is easy to check that
\begin{equation}
  \langle \Delta_{\mu}^s f, g\rangle_{\mu}=\sum_{\{u,v\}\in E}w_{uv}(f(u)-s_{uv}f(v))(\overline{g(u)-s_{uv}g(v)}).
\end{equation}
Note that the right hand side of the above equality is well-defined since $\Gamma \subseteq U(1)$. The corresponding \emph{Rayleigh quotient}
$\mathcal{R}_{\mu}^s(f)$ of a function $f: V\to \mathbb{C}$ is
\begin{equation} \label{eq:Rayleigh}
  \mathcal{R}_{\mu}^s(f):=\frac{\sum_{\{u,v\}\in E}w_{uv}|f(u)-s_{uv}f(v)|^2}{\sum_{u\in V}|f(u)|^2\mu(u)}.
\end{equation}
The Courant-Fisher-Weyl min-max principle tells that, for any $n \in
[N]$,
\begin{equation}\label{eq:minmax}
  \lambda_n(\Delta_{\mu}^s)=\min_{\substack{\langle f_p, f_q\rangle_{\mu}=0,\\p,q \in [n], p \neq q}}\max_{f\in \mathrm{span}\{f_1,\ldots, f_n\}}\mathcal{R}_{\mu}^s(f),
\end{equation}
where $f_1,\dots,f_n,f \not\equiv 0$.

\begin{Rmk}
  In the case of a graph $G$ with measure $\mu_d(u):=d_u$ for all $u \in V$
  and signature group $\Gamma=U(1)$ or $\Gamma = S^1_k$, $k$ even, equation
  (\ref{eq:minmax}) implies the following relations between eigenvalues
  \begin{equation}
    2-\lambda_{N-k+1}(\Delta_{\mu_d}^{-s})=\lambda_k(\Delta_{\mu_d}^s).
  \end{equation}
  Here $-s$ is the signature obtained by taking the negative values of
  $s$ (as complex numbers). This generalizes
  \cite[Lemma 1]{AtayLiu14} where $\Gamma = S^1_2 = \{\pm 1\}$.
\end{Rmk}

There is a natural operation, called switching, acting on the
signatures \cite{Zaslavsky82,ZaslavskyMatrices}.

\begin{Def}\label{def:switching} Let $G$ be a graph with signature $s$. For any function
  $\tau: V\to \Gamma$ we can define a new signature
  $s^{\tau}:E^{or}\to \Gamma$ as follows:
  \begin{equation}
    s^{\tau}(e)=\tau(u)s(e)\tau(v)^{-1} \quad \forall\, e=(u,v)\in E^{or}.
  \end{equation}
  We call the function $\tau$ a \emph{switching function}.  The signature $s$
  and $s'$ are said to be \emph{switching equivalent} if there exists a
  switching function $\tau$ such that $s'=s^{\tau}$.
\end{Def}

One can check that switching is indeed an equivalence relation on the
set of signatures. An important invariant of the switching operation
is the spectrum of $\Delta_{\mu}^s$. In fact, it holds that (see
e.g. \cite{ZaslavskyMatrices})
\begin{equation}\label{unitary_equivalence_of_laplace}
  \Delta_{\mu}^{s^{\tau}}=D(\tau)\Delta_{\mu}^sD(\tau)^{-1},
\end{equation}
where $D(\tau)$ is the diagonal matrix with entries
$D(\tau)_{uu}=\tau(u)$. This means that $\Delta_{\mu}^{s^{\tau}}$ and
$\Delta_{\mu}^s$ are unitarily equivalent and have the same spectrum.
In particular, if the signature $s: E^{or}\to \Gamma$ is switching
equivalent to the trivial signature $s_1$, the operator
$\Delta_{\mu}^s$ is unitarily equivalent to the classical graph
Laplacian. In this case we have
$\lambda_1(\Delta_{\mu}^s)=0$. We will show in Section
\ref{section:Cheeger inequality} that this is the only case that the
first eigenvalue vanishes. Observe that on a tree, any signature is
switching equivalent to the trivial signature.

\begin{Rmk}
  The concept of switching is developed in the study of Harary's
  balance theory for signed graphs \cite{Harary53}, i.e. graphs with
  signatures $s:E^{or}\to S_2^1=\{+1,-1\}$, which we briefly review in
  the next section. The corresponding terminology in the magnetic
  theory is the gauge transformation, see, e.g.,
  \cite{ColindeVTT11,Shigekawa87}. Note that switching is an operation
  acting on the signatures $s_{uv}:=e^{i\alpha_{uv}}$, while the gauge
  transformation is acting on the magnetic potentials $\alpha_{uv}$,
  where $(u,v)\in E^{or}$. We will only use the terminology of the
  magnetic theory in the manifold case, see Section
  \ref{section:manifold case}. Switching equivalent signatures are
  called cohomologous weight functions in \cite{Sunada93}.
\end{Rmk}

\section{Frustration index and Cheeger
  constants}\label{section:Cheeger constant}

One of our motivations for introducing the Cheeger constants is
Harary's structural balance theory \cite{Harary53}. Let $G$ be a
finite graph with (possibly non-abelian) signature group $\Gamma$ and
signature $s: E^{or} \to \Gamma$, and
$\mathcal{C}$ be a cycle, which is a graph of the sequence $(u_1,u_2), (u_2,u_3), \cdots, (u_{l-1}, u_l), (u_l,u_1)$ of distinct edges. Then the signature of $\mathcal{C}$ is the conjugacy class of
the element
\begin{equation*}
s_{u_1u_2}s_{u_2,u_3}\cdots
s_{u_{l-1}u_l}s_{u_lu_1}\in \Gamma.
\end{equation*}
Note that the signature of a
cycle is switching invariant.

\begin{Def}
  A signature $s:E^{or}\to \Gamma$ is said to be \emph{balanced} if the
  signature of every cycle of $G$ is (the conjugacy class of the) identity
  element $1\in \Gamma$.
\end{Def}

For convenience, we will also say that the graph $G$ or a subgraph of
$G$ is balanced if the signature restricted on it is balanced.
Since the signature of a cycle is switching invariant, the property of being
balanced is also switching invariant. We have the following
characterization of being balanced using switching operations.

\begin{Pro}\label{Pro:switching lemma}{\rm (\cite[Corollary
    3.3]{Zaslavsky82})} A signature $s: E^{or}\to \Gamma$ is balanced
  if and only if it is switching equivalent to the trivial signature
  $s_{1}$.
\end{Pro}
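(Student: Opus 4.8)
The plan is to prove the two implications separately; the ``if'' direction is immediate from the facts already recorded above, while the ``only if'' direction requires building a switching function by transport along walks from a base vertex.

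Suppose first that $s$ is switching equivalent to $s_1$. Since $s_1$ is obviously balanced (every cycle has trivial signature) and being balanced is switching invariant, $s$ is balanced; alternatively, writing $s = s_1^\tau$ one checks directly that the signature of any cycle $(u_1,u_2),\dots,(u_l,u_1)$ equals the telescoping product $\tau(u_1)\tau(u_2)^{-1}\tau(u_2)\tau(u_3)^{-1}\cdots\tau(u_l)\tau(u_1)^{-1}=1$. Either way this direction is routine.

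For the converse, assume $s$ is balanced. It suffices to argue on each connected component of $G$ separately, so assume $G$ is connected and fix a base vertex $u_0\in V$. For a vertex $v$ choose any walk $P=(u_0=x_0,x_1,\dots,x_m=v)$ from $u_0$ to $v$ and set
\[
  \tau(v) := s_{x_0x_1}s_{x_1x_2}\cdots s_{x_{m-1}x_m}\in\Gamma,
\]
with $\tau(u_0):=1$. The crucial point, which I expect to be the main obstacle, is that $\tau$ is well defined, i.e.\ independent of the chosen walk; equivalently, the signature product $s(W):=s_{e_1}\cdots s_{e_\ell}$ along any closed walk $W=(e_1,\dots,e_\ell)$ is $1$. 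I would prove this by induction on the length $\ell$. If some vertex occurs twice in the interior of $W$, then $W$ decomposes into a strictly shorter sub-loop $W_1$ and its strictly shorter complement $W_2$, with $s(W)=P\,s(W_1)\,Q$ and $s(W_2)=PQ$ for suitable products $P,Q$ of signatures; the inductive hypothesis gives $s(W_1)=1$ and $s(W_2)=1$, hence $s(W)=PQ=1$. Otherwise $W$ is either a backtracking pair $e,\bar e$, so that $s(W)=s(e)s(e)^{-1}=1$ by \eqref{eq:signatureKEY}, or (as $G$ is simple) a genuine cycle, in which case $s(W)=1$ because $s$ is balanced and the conjugacy class of the identity is $\{1\}$. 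Since reversing a walk inverts its signature product, applying this to the concatenation of $P$ with the reverse of a second walk $P'$ from $u_0$ to $v$ yields $s(P)=s(P')$, so $\tau$ is well defined. (In the situation actually used in this paper, $\Gamma$ is abelian, and the bookkeeping in the induction is correspondingly lighter.)

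Finally I would verify $s^\tau=s_1$. Given an oriented edge $e=(u,v)$, extend a walk from $u_0$ to $u$ by $e$ to a walk from $u_0$ to $v$; well-definedness of $\tau$ gives $\tau(v)=\tau(u)s_{uv}$, hence $s_{uv}=\tau(u)^{-1}\tau(v)$ and therefore $s^\tau(e)=\tau(u)s_{uv}\tau(v)^{-1}=1$. Thus $s^\tau=s_1$, so $s$ is switching equivalent to the trivial signature, completing the proof. The only genuine subtlety is the induction in the previous paragraph—locating the first repeated vertex and, in the non-abelian case, keeping the order of the group elements straight—everything else being a telescoping computation.
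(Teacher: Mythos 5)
The paper does not give a proof of this proposition; it simply cites Zaslavsky~\cite[Corollary 3.3]{Zaslavsky82}, so there is no in-paper argument to compare against. Your proof is correct and is the standard one. The ``if'' direction follows from the switching invariance of balance (noted in the paper just before the proposition) or, as you observe, a direct telescoping. For the ``only if'' direction, transporting signature products along walks from a base vertex in each connected component is equivalent to the usual spanning-tree proof (switch so that tree edges become trivial, then use balance to conclude the non-tree edges are trivial too). Your reduction of well-definedness to the vanishing of closed-walk products, and the induction splitting a closed walk at a repeated vertex down to cycles and backtracking pairs, is exactly the needed content, and you correctly handle the non-abelian case by noting that the conjugacy class of the identity is $\{1\}$, so the balance hypothesis gives $s(W)=1$ (not just conjugate to $1$) for a genuine cycle. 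One small point worth making explicit: the induction hypothesis must run over \emph{all} closed walks, not only those based at $u_0$, since the inner sub-loop $W_1$ is based at an intermediate vertex; your statement already quantifies over ``any closed walk,'' so this is fine, but it is the detail most likely to be misread.
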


\begin{Rmk}
  The concept of balance has been studied in the literature under
  various terminologies. For example, a balanced cycle is said to be
  satisfying Kirchhoff's Voltage Law in \cite{Gross74}. In
  \cite{ColindeVTT11}, the related concept to the signature of a cycle
  is the holonomy map. In magnetic theory, it is related to the
  magnetic flux \cite{LiebLoss1993}.
\end{Rmk}

We define the following frustration index to quantify how far a
signature on a subset is from being balanced.

\begin{Def}\label{def:frustration index}
  Let $G$ be a finite graph with signature $s$ and $V_1\subseteq V$ nonempty
  with induced subgraph~$(V_1,E_1)$. The \emph{frustration index} $\iota^s(V_1)$
  of $V_1$ is defined as
  \begin{align}\label{eq:frustration index}
    \iota^s(V_1):&=\min_{\tau: V_1\to
      \Gamma}\sum_{\{u,v\}\in E_1}w_{uv}|\tau(u)-s_{uv}\tau(v)| \\&=\min_{\tau: V_1\to
      \Gamma}\sum_{\{u,v\}\in E_1}w_{uv}|1-\tau(u)^{-1}s_{uv}\tau(v)|
  \end{align}
\end{Def}

A direct computation shows that the frustration index of a set is
switching invariant and, according to Proposition \ref{Pro:switching
  lemma}, we have
\begin{equation}\label{eq:frustrationproperty}
  \iota^s(V_1)=0 \Leftrightarrow \,\,\text{ the subgraph induced by } V_1 \,\text{ is balanced.}
\end{equation}
If $G$ is unweighted and $\Gamma=\{+1,-1\}$, then
\begin{equation}\label{eq:hararyindex}
  \iota^{s}(V)=2e_{min}^s(V),
\end{equation}
where $e_{min}^s(V)$ is the minimal number of edges that need to be
removed from $E$ in order to make $G=(V, E)$ balanced. The quantity $e_{min}^s(V)$ is exactly
the \emph{line index of balance} of Harary \cite{Harary59}. Having the work of Vannimenus and
Toulouse~\cite{VT77} in mind, Zaslavsky  suggested later the term "frustration index"
to Harary \cite{Zaslavsky14}.

We denote the \emph{boundary measure} of $V_1$ by
\begin{equation}
|E(V_1,V_1^c)|:=\sum_{u\in V_1}\sum_{v\in V_1^c}w_{uv},
\end{equation}
where $V_1^c$ is the complement of $V_1$ in $V$. The \emph{$\mu$-volume} of
$V_1$ is given by
\begin{equation}
\mathrm{vol}_{\mu}(V_1):=\sum_{u\in V_1}\mu(u).
\end{equation}

\begin{Def}\label{def:Cheeger consant discrete}
  Let $G$ be a finite graph with a signature $s$.  The \emph{Cheeger
  constant} $h_1^s(\mu)$ is defined as
  \begin{equation}\label{eq:Cheeger constant}
    h_1^s(\mu):=\min_{\emptyset\neq V_1\subseteq V}\phi_{\mu}^s(V_1),
  \end{equation}
  where
  \begin{equation}
    \phi^s_{\mu}(V_1):=\frac{\iota^s(V_1)+|E(V_1, V_1^c)|}{\mathrm{vol}_{\mu}(V_1)}.
  \end{equation}
\end{Def}

The choice of $V_1$ achieving the minimum in (\ref{eq:Cheeger
  constant}) can be viewed as a subset of vertices which balances the
two complementary goals of minimizing its frustration index and its
expansion, measured by the edges $E(V_1, V_1^c)$ connecting $V_1$ with
its complement.

A \emph{nontrivial $n$-subpartition} of $V$ is given by $n$ pairwise disjoint
nonempty subsets $V_1,\dots,V_n \subset V$ and a \emph{nontrivial
$n$-partition} additionally satisfies $\bigcup_{p \in [n]} V_p=V$. We
abbreviate a nontrivial $n$-(sub)partition $\{V_1, \ldots , V_n\}$ by
$\{V_p\}_{[n]}$. In the spirit of Miclo \cite{Miclo2008}, we define the
multi-way Cheeger constants as follows.

\begin{Def}\label{def:Cheeger constant higher discrete}
  Let $G$ be a finite graph with a signature $s$.  The \emph{$n$-way Cheeger
  constant} $h_n^s(\mu)$ of $G$ is defined as
  \begin{equation}\label{eq:HigherCheeger constant}
    h_n^s(\mu):=\min_{\{V_p\}_{[n]}}\  \max_{p \in [n]}\phi_{\mu}^s(V_p),
  \end{equation}
  where the minimum is taken over all nontrivial
  $n$-subpartitions $\{V_p\}_{[n]}$ of $V$.
\end{Def}

Observe that the $n$-way Cheeger constant of a graph $G$ is monotone
with respect to $n$, that is, $h_n^s(\mu)\leq h_{n+1}^s(\mu)$.

Using (\ref{eq:frustrationproperty}) and the fact that the frustration
index is switching invariant, we obtain the following properties of
the Cheeger constants.

\begin{Pro}\label{pro:Cheeger constant}
  The n-way Cheeger constants $h_n^s(\mu)$ of a graph $G$ are
  switching invariant. Moreover, $h_n^s(\mu)=0$ if and only if $G$
  consists of at least $n$ connected components and at least $n$
  of them are balanced.
\end{Pro}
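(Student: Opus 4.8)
The plan is to prove both directions of Proposition~\ref{pro:Cheeger constant} separately, using the switching invariance of the frustration index established earlier together with the characterization in equation (\ref{eq:frustrationproperty}).

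\textbf{Switching invariance.} First I would record the easy observation: by Definition~\ref{def:frustration index}, the frustration index $\iota^s(V_1)$ is switching invariant (this follows because replacing $s$ by $s^\sigma$ for a switching function $\sigma$ just reparametrizes the minimization over $\tau\colon V_1\to\Gamma$ via $\tau\mapsto \sigma^{-1}\tau$, or more precisely $\tau\mapsto\tau\sigma$, on the induced subgraph). The boundary measure $|E(V_1,V_1^c)|$ and the volume $\vol_\mu(V_1)$ do not involve $s$ at all, so $\phi^s_\mu(V_1)$ is switching invariant for every nonempty $V_1$. Hence each quantity $\max_{p\in[n]}\phi^s_\mu(V_p)$ appearing in (\ref{eq:HigherCheeger constant}) is unchanged under switching, and therefore so is the minimum $h^s_n(\mu)$ over all nontrivial $n$-subpartitions.

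\textbf{The ``if'' direction.} Suppose $G$ has at least $n$ connected components, at least $n$ of which are balanced; call $n$ of these balanced components $V_1,\dots,V_n$. They are pairwise disjoint and nonempty, so $\{V_p\}_{[n]}$ is a nontrivial $n$-subpartition. Since each $V_p$ is a connected component, it has no edges to its complement, so $|E(V_p,V_p^c)|=0$. Since the induced subgraph on $V_p$ is balanced, (\ref{eq:frustrationproperty}) gives $\iota^s(V_p)=0$. Thus $\phi^s_\mu(V_p)=0$ for all $p$, whence $h^s_n(\mu)\le\max_p\phi^s_\mu(V_p)=0$, and since $h^s_n(\mu)\ge 0$ trivially (all three ingredients are nonnegative), we get $h^s_n(\mu)=0$.

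\textbf{The ``only if'' direction.} This is the step I expect to require the most care. Assume $h^s_n(\mu)=0$. Since $V$ is finite, the minimum in (\ref{eq:HigherCheeger constant}) is attained, so there is a nontrivial $n$-subpartition $V_1,\dots,V_n$ with $\phi^s_\mu(V_p)=0$ for every $p\in[n]$. As $\vol_\mu(V_p)>0$ and both $\iota^s(V_p)\ge 0$ and $|E(V_p,V_p^c)|\ge 0$, this forces $\iota^s(V_p)=0$ and $|E(V_p,V_p^c)|=0$ for each $p$. The vanishing of $|E(V_p,V_p^c)|$ means no edge of $G$ leaves $V_p$; since $G$ is connected on each of its components, $V_p$ must be a union of connected components of $G$. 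The vanishing of $\iota^s(V_p)$ together with (\ref{eq:frustrationproperty}) says the subgraph induced on $V_p$ is balanced, hence every connected component contained in $V_p$ is balanced (a subgraph of a balanced graph is balanced, as the cycle-signature condition is inherited). Now I would argue that we may assume, after possibly shrinking, that each $V_p$ is a single balanced connected component: pick one connected component $C_p\subseteq V_p$; the $C_p$ are still pairwise disjoint nonempty balanced subsets, each a connected component of $G$. This produces $n$ distinct balanced connected components of $G$ (distinctness because the $V_p$, and hence the $C_p$, are disjoint), so in particular $G$ has at least $n$ connected components and at least $n$ of them are balanced. The only subtlety worth spelling out is why distinct indices give distinct components — this is immediate from disjointness of the $V_p$ — and why passing to a single component preserves balancedness, which follows from the switching characterization in Proposition~\ref{Pro:switching lemma} or directly from the definition of balanced. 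This completes the proof.
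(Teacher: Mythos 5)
Your proof is correct and takes exactly the approach the paper intends: the paper does not write out a proof of Proposition~\ref{pro:Cheeger constant} but merely remarks that it follows from \eqref{eq:frustrationproperty} and the switching invariance of the frustration index, and your argument is the natural way to flesh out that remark. The one step worth being slightly more careful about is that, for $\Gamma = U(1)$, the minimum in \eqref{eq:frustration index} is over a continuum, so one invokes compactness of $U(1)^{|V_1|}$ to see that it is attained; after that, the minimum in \eqref{eq:HigherCheeger constant} is over the finitely many $n$-subpartitions of $V$ and is attained, which is what you need for the ``only if'' direction.
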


If $s_b: E^{or}\to \Gamma$ denotes a balanced signature, then
$h_{1}^{s_b}(\mu)=0$ becomes trivial and
\begin{equation}
  h_2^{s_b}(\mu)=\min_{\{V_1, V_2\}}\ \max_{p\in [2]}\ \phi_{\mu}^{s_b}(V_p)
		= \min_{\substack{\emptyset\neq V_1\subseteq V\\\mathrm{vol}_{\mu}(V_1)\leq \frac{1}{2}\mathrm{vol}_{\mu}(V)}} 		
			\frac{|E(V_1, V_1^c)|}{\mathrm{vol}_{\mu}(V_1)},
\end{equation}
that is, $h_2^{s_b}(\mu)$ reduces to the classical Cheeger constant.

\begin{Rmk}
  Due to equation (\ref{eq:hararyindex}), the $n$-way Cheeger
  constant in (\ref{eq:HigherCheeger constant}) reduces to the signed
  Cheeger constant introduced on signed graphs \cite{AtayLiu14} with
  signature group $\Gamma=\{+1,-1\}$. We mention that the signed
  Cheeger constant in \cite{AtayLiu14} is a unification of the
  classical Cheeger constant, the non-bipartiteness parameter in
  \cite{DR1994}, the bipartiteness ratio in \cite{Trevisan2012}, and
  the dual Cheeger constant in \cite{BJ}.
\end{Rmk}

For $n \in [N]$ and any signature $s: E^{or}\to \Gamma$, we observe
\begin{equation}\label{eq:3.11}
  h_n^{s_b}(\mu)\leq h_n^{s}(\mu).
\end{equation}
In fact, let $\{\widetilde{V}_p\}_{[n]}$ be the nontrivial $n$-subpartition of $V$ that achieves $h_n^s(\mu)$, i.e. $h_n^s(\mu)=\max_{p\in [n]}\phi_\mu^s(\widetilde{V}_p)$, we have $\phi_\mu^{s_b}(\widetilde{V}_p)\leq \phi_\mu^s(\widetilde{V}_p)$ since $\iota^{s_b}(\widetilde{V}_p)=0\leq \iota^{s_b}(\widetilde{V}_p)$. Hence, (\ref{eq:3.11}) follows by Definition \ref{def:Cheeger constant higher discrete}.
The inequality (\ref{eq:3.11}) is similar, in spirit, with Kato's inequality for noncompact
spaces \cite[Lemma 1.2, Corollary 1.3]{DodziukMathai06}
(alternatively, also called the diamagnetic inequality for both
compact and noncompact spaces in \cite{LiebLoss1993}) where the bottom
of the spectrum increases when a balanced signature is replaced by an
unbalanced signature.

For $n=1$ we have the following result. Recalling $h_1^{s_b}(\mu)=0$,
Proposition \ref{prop:spielm} tells us that this change of the first
Cheeger constant (by choosing an unbalanced signature) can be quite
large.

\begin{Pro} \label{prop:spielm} Let $G$ be an unweighted connected
  finite $d$-regular graph and $M = \max_{v \in V} \mu(v)$. Then, for
  every $k \ge 2$, there exists a $k$-cyclic signature $s_0: E^{or}\to
  S_k^1$ such that
  \begin{equation}\label{eq:existLargeCheeger}
    h_1^{s_0}(\mu)\geq \frac{d-2\sqrt{d-1}}{2M}.
  \end{equation}
\end{Pro}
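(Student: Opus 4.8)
The plan is to bound $h_1^{s_0}(\mu)$ from below by $\tfrac12\lambda_1(\Delta_\mu^{s_0})$ via the easy half of the Cheeger inequality, then by $\tfrac1{2M}(d-\lambda_{\max}(A^{s_0}))$ using that $G$ is $d$-regular, and finally to pick $s_0$ by the method of interlacing families so that the signed adjacency matrix $A^{s_0}$ has $\lambda_{\max}(A^{s_0})\le 2\sqrt{d-1}$; in other words, I would establish the chain
\[
  h_1^{s_0}(\mu)\ \ge\ \tfrac12\lambda_1(\Delta_\mu^{s_0})\ \ge\ \tfrac1{2M}\bigl(d-\lambda_{\max}(A^{s_0})\bigr)\ \ge\ \tfrac1{2M}\bigl(d-2\sqrt{d-1}\bigr)
\]
and verify the three inequalities in turn.

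The first inequality holds for every signature: since $S_k^1\subseteq U(1)$, this is the easy half of the Cheeger inequality, $\lambda_1(\Delta_\mu^{s})\le 2h_1^{s}(\mu)$ (Theorems~\ref{thm:CheegerInequality} and~\ref{thm:CheegerInequalityU(1)}), and it may also be re-derived directly by feeding into the Rayleigh quotient~\eqref{eq:Rayleigh} the function that equals a frustration-minimizing switching function $\tau\colon V_1\to\Gamma$ on $V_1$ and $0$ on $V_1^c$, using $|\tau(u)-s_{uv}\tau(v)|^2\le 2|\tau(u)-s_{uv}\tau(v)|$ for unit-modulus values and the min--max principle~\eqref{eq:minmax}. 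The second inequality uses that $G$ is unweighted and $d$-regular, so $D=d\cdot\mathrm{Id}$, and that $\sum_u|f(u)|^2\mu(u)\le M\sum_u|f(u)|^2$; combined with $\langle\Delta_\mu^{s}f,f\rangle_\mu=\langle(d\cdot\mathrm{Id}-A^{s})f,f\rangle$ (counting measure on the right-hand side) and~\eqref{eq:minmax}, this gives $\lambda_1(\Delta_\mu^{s})\ge\tfrac1M(d-\lambda_{\max}(A^{s}))$, where $\lambda_{\max}(A^{s})$ denotes the largest eigenvalue of the Hermitian matrix $A^{s}$. Thus the whole statement reduces to finding a signature $s_0\colon E^{or}\to S_k^1$ with $\lambda_{\max}(A^{s_0})\le 2\sqrt{d-1}$.

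To produce such an $s_0$ I would use the method of interlacing families of Marcus, Spielman and Srivastava. The key computation, valid for every $k\ge 2$, is that the average of $\det(x\cdot\mathrm{Id}-A^{s})$ over signatures $s$ whose values $s_{uv}\in S_k^1$ are independent and uniformly distributed on the edges equals the matching polynomial $\mu_G(x)=\sum_{j\ge 0}(-1)^j m_j(G)\,x^{N-2j}$ of $G$: in the permutation expansion of the determinant, each cycle of length $\ge 3$ contributes a product of independent factors $\mathbb{E}[s_{u_iu_{i+1}}]=0$, while a transposition $(u\,v)$ contributes $s_{uv}s_{vu}=|s_{uv}|^2=1$, so only the involutions --- i.e.\ the matchings of $G$ --- survive. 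By the Heilmann--Lieb theorem $\mu_G$ is real-rooted with all roots in $[-2\sqrt{d-1},2\sqrt{d-1}]$ (here $d$-regularity is used), so, provided the family $\{\det(x\cdot\mathrm{Id}-A^{s})\}_s$ is an interlacing family, some $s_0$ has $\lambda_{\max}(A^{s_0})$ no larger than the largest root of $\mu_G$, hence $\lambda_{\max}(A^{s_0})\le 2\sqrt{d-1}$. Feeding this into the chain above finishes the proof.

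The hard part will be the interlacing-family property of $\{\det(x\cdot\mathrm{Id}-A^{s})\}_s$ when $k\ge 3$. For $k=2$ one has $S_2^1=\{+1,-1\}$ and the needed fact is exactly the bipartite-Ramanujan theorem of Marcus, Spielman and Srivastava; and for every even $k$ it suffices to use a $\pm1$ signature, since $\{+1,-1\}\subseteq S_k^1$. For odd $k$, however, the entries of $A^{s}$ are genuinely complex and the per-edge perturbations are indefinite Hermitian matrices of rank two, so one has to verify that the associated characteristic polynomials still admit a common interlacing at each node of the averaging tree --- this is precisely where the real-stability machinery behind the interlacing-families method (or its extension to cyclic covering graphs) must be invoked. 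I would also note that the bound is non-trivial only for $d\ge 3$, since $d-2\sqrt{d-1}=(\sqrt{d-1}-1)^2$ vanishes when $d=2$.
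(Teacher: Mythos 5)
Your proposal matches the paper's proof: the paper also lower-bounds $h_1^{s_0}(\mu)$ by $\tfrac12\lambda_1(\Delta_\mu^{s_0})$ via the easy half of Cheeger's inequality, implicitly uses $d$-regularity together with $\mu\le M$ to reduce to exhibiting a $k$-cyclic signature $s_0$ with $\lambda_{\max}(A^{s_0})\le 2\sqrt{d-1}$, and cites~\cite[Theorem~2]{LPV14} for the existence of such a signature. The interlacing-family/matching-polynomial argument you sketch --- including the common-interlacing subtlety for odd $k$, where the entries of $A^s$ are genuinely complex --- is precisely what \cite{LPV14} carries out (extending \cite{MSS}), so what you flag as ``the hard part'' is exactly the content of the cited theorem rather than a gap in the overall strategy.
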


\begin{proof}
  Extending a result of~\cite{MSS}, it is shown in~\cite[Theorem 2]{LPV14}
  that there exists a $k$-cyclic signature $s_0$ such that the maximal
  eigenvalue of the matrix $A^{s_0}$ is no greater than $2\sqrt{d-1}$. The
  estimate (\ref{eq:existLargeCheeger}) is then an immediate consequence of
  this result, combined with Cheeger's inequality (\ref{eq:Cheeger inequality}),
  given at the beginning of the next section.
\end{proof}

\section{Cheeger's inequality}\label{section:Cheeger inequality}

In this section, we prove Cheeger's inequality relating
$\lambda_1(\Delta_{\mu}^s)$ to the first Cheeger constant
$h_1^{s}(\mu)$ for graph Laplacians with cyclic signatures (Theorem
\ref{thm:CheegerInequality}) and for discrete magnetic Laplacians
(Theorem \ref{thm:CheegerInequalityU(1)}).

\begin{Thm}\label{thm:CheegerInequality}
  Let $G$ be a finite graph with signature $s: E^{or}\to
  S_k^1$. Then we have
  \begin{equation}\label{eq:Cheeger inequality}
    \frac{1}{2}\lambda_1(\Delta_{\mu}^s)\leq h_1^s(\mu)\leq 2\sqrt{2d_{\mu}\lambda_1(\Delta_{\mu}^s)}.
  \end{equation}
\end{Thm}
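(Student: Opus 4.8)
The plan is to prove the two inequalities separately. The lower bound comes from turning a near-optimal set into a test function for the Rayleigh quotient, and the upper bound from a coarea (Cheeger-type) argument applied to a first eigenfunction, with the extra feature that the switching functions occurring in $\iota^s$ must take values in the \emph{finite} group $S_k^1$.

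\textbf{Lower bound.} To see $\tfrac12\lambda_1(\Delta_\mu^s)\le h_1^s(\mu)$, I would fix a nonempty $V_1\subseteq V$, choose $\tau\colon V_1\to S_k^1$ attaining $\iota^s(V_1)$, and extend it by $0$ to a function $f\colon V\to\mathbb C$. Splitting the numerator of $\mathcal R_\mu^s(f)$ into edges with both ends in $V_1$, edges with exactly one end in $V_1$, and edges inside $V_1^c$, and using $|z|^2\le 2|z|$ for $z=\tau(u)-s_{uv}\tau(v)$ (which has $|z|\le2$) together with the vanishing of $f$ on $V_1^c$, one finds numerator $\le 2\iota^s(V_1)+|E(V_1,V_1^c)|$ and denominator $\mathrm{vol}_\mu(V_1)$. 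Hence $\lambda_1(\Delta_\mu^s)\le\mathcal R_\mu^s(f)\le 2\phi_\mu^s(V_1)$ by the min--max principle~(\ref{eq:minmax}) with $n=1$, and taking the infimum over $V_1$ gives the claim.

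\textbf{Upper bound.} For $h_1^s(\mu)\le 2\sqrt{2d_\mu\lambda_1(\Delta_\mu^s)}$ I would start from an eigenfunction $f$ for $\lambda_1(\Delta_\mu^s)$, write $f(u)=|f(u)|\tau_0(u)$ with $\tau_0(u)\in U(1)$, and use the super-level sets $V_t:=\{u:|f(u)|^2>t\}$. To obtain admissible switching functions on these sets I would round the phases to $S_k^1$: for a Haar-random $\omega\in U(1)$ let $\tau_\omega(u)\in S_k^1$ be the $k$-th root of unity closest to $\omega\tau_0(u)$. Since multiplication by $s_{uv}\in S_k^1$ is an isometry permuting $S_k^1$, it commutes with this rounding, so $|\tau_\omega(u)-s_{uv}\tau_\omega(v)|$ depends on the edge only through the angular distance $\theta_{uv}\in[0,\pi]$ between $\tau_0(u)$ and $s_{uv}\tau_0(v)$, and a short averaging computation gives $\mathbb E_\omega|\tau_\omega(u)-s_{uv}\tau_\omega(v)|\le\theta_{uv}$. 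As $\tau_\omega|_{V_t}$ competes in $\iota^s(V_t)$, integrating in $t$, averaging in $\omega$, and using the layer-cake identities
\begin{equation*}
  \int_0^\infty\mathrm{vol}_\mu(V_t)\,dt=\sum_{u}\mu(u)|f(u)|^2,\qquad \int_0^\infty|E(V_t,V_t^c)|\,dt=\sum_{\{u,v\}\in E}w_{uv}\bigl||f(u)|^2-|f(v)|^2\bigr|,
\end{equation*}
and $\int_0^\infty\sum_{\{u,v\}\in E:\,u,v\in V_t}w_{uv}|\tau_\omega(u)-s_{uv}\tau_\omega(v)|\,dt=\sum_{\{u,v\}\in E}w_{uv}|\tau_\omega(u)-s_{uv}\tau_\omega(v)|\min(|f(u)|^2,|f(v)|^2)$, yields a coarea inequality expressing $\int_0^\infty(\iota^s(V_t)+|E(V_t,V_t^c)|)\,dt$ as a single edge sum.

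\textbf{Key estimate and main obstacle.} The step I expect to be the crux is the pointwise edge inequality
\begin{equation*}
  \mathbb E_\omega|\tau_\omega(u)-s_{uv}\tau_\omega(v)|\min(|f(u)|^2,|f(v)|^2)+\bigl||f(u)|^2-|f(v)|^2\bigr|\ \le\ 2\,|f(u)-s_{uv}f(v)|\bigl(|f(u)|+|f(v)|\bigr),
\end{equation*}
which must control the frustration term and the boundary term of a single edge simultaneously. I would derive it from the identity $|f(u)-s_{uv}f(v)|^2=(|f(u)|-|f(v)|)^2+|f(u)||f(v)|\,|\tau_0(u)-s_{uv}\tau_0(v)|^2$, the rounding bound together with $|\tau_0(u)-s_{uv}\tau_0(v)|=2\sin(\theta_{uv}/2)\ge\tfrac2\pi\theta_{uv}$ (so $\theta_{uv}\le\tfrac\pi2|\tau_0(u)-s_{uv}\tau_0(v)|$), and $|f(u)|+|f(v)|\ge 2\sqrt{|f(u)||f(v)|}$; the constant that comes out is $1+\tfrac\pi4<2$, so rounding to the finite group $S_k^1$ costs nothing. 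Summing the edge estimate, using Cauchy--Schwarz, then $(|f(u)|+|f(v)|)^2\le 2(|f(u)|^2+|f(v)|^2)$, $\sum_u|f(u)|^2d_u\le d_\mu\sum_u|f(u)|^2\mu(u)$, and $\sum_{\{u,v\}\in E}w_{uv}|f(u)-s_{uv}f(v)|^2=\lambda_1(\Delta_\mu^s)\sum_u|f(u)|^2\mu(u)$, converts the coarea inequality into
\begin{equation*}
  \int_0^\infty\bigl(\iota^s(V_t)+|E(V_t,V_t^c)|\bigr)\,dt\ \le\ 2\sqrt{2d_\mu\lambda_1(\Delta_\mu^s)}\int_0^\infty\mathrm{vol}_\mu(V_t)\,dt.
\end{equation*}
Since $\int_0^\infty\mathrm{vol}_\mu(V_t)\,dt=\sum_u\mu(u)|f(u)|^2>0$, there is a $t_0$ with $V_{t_0}\ne\emptyset$ and $\phi_\mu^s(V_{t_0})\le 2\sqrt{2d_\mu\lambda_1(\Delta_\mu^s)}$, hence $h_1^s(\mu)\le 2\sqrt{2d_\mu\lambda_1(\Delta_\mu^s)}$. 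For the $U(1)$-signature version (Theorem~\ref{thm:CheegerInequalityU(1)}) no rounding is needed: one may take $\tau_\omega\equiv\tau_0$, and the displayed identity gives the edge estimate directly with constant $2$.
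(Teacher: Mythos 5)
Your proof is correct, and at bottom it follows the same route as the paper's: localize at a first eigenfunction, introduce a random rotation of the phase, round to $S_k^1$, and run a coarea/layer-cake argument together with Cauchy--Schwarz. In fact the paper's random sectorial decomposition $Q_j^\theta$ in \eqref{eq:notationQ} is exactly your ``round $\omega\tau_0(u)$ to the nearest $k$-th root of unity'' under the reparametrization $\theta\leftrightarrow\omega$, and the equivariance you use, $R(\omega s_{uv}\tau_0(v))=s_{uv}R(\omega\tau_0(v))$, is precisely the paper's $s_{uv}Y_{\sqrt t,\theta}(f(v))=Y_{\sqrt t,\theta}(s_{uv}f(v))$. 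The lower bound via the extension of a minimizing switching function by zero and $|\tau(u)-s_{uv}\tau(v)|^2\le 2|\tau(u)-s_{uv}\tau(v)|$ is also the paper's argument (display \eqref{eq:lowerbound proof}) with the infimum taken at the end rather than at the start.

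Where you genuinely deviate is in how the per-edge angular estimate is obtained. The paper folds the radial cutoff and the angular rounding into a single function $Y_{\sqrt t,\theta}$ and proves the per-edge bound (Lemma \ref{lemma:key}) by an explicit linear interpolation between $|1-\xi^l|$ and $|1-\xi^{l+1}|$, bounded via $|1-\xi^{l+1}|\le 2|1-\xi^l|$. You instead separate the radial layer-cake from the angular averaging and prove $\mathbb E_\omega|\tau_\omega(u)-s_{uv}\tau_\omega(v)|\le\theta_{uv}$ together with $\theta_{uv}\le\tfrac{\pi}{2}|\tau_0(u)-s_{uv}\tau_0(v)|$ and $\min(|f(u)|,|f(v)|)\le\sqrt{|f(u)||f(v)|}\le\tfrac12(|f(u)|+|f(v)|)$; chaining these through the polar identity for $|f(u)-s_{uv}f(v)|^2$ yields the edge bound with constant $1+\tfrac{\pi}{4}<2$, slightly better than the paper's $2$. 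This is a cleaner computation — in particular it avoids the small awkwardness in Lemma \ref{lemma:key} when the interpolation index $l=0$ — and since the eventual Cauchy--Schwarz step only uses the constant $2$, the final inequality is unchanged. Two points deserve a fuller write-up: the averaging bound $\mathbb E_\omega|R(\omega z_1)-R(\omega z_2)|\le\theta$ (where $\theta$ is the arc-length between $z_1,z_2\in U(1)$) is the real crux and should be proved, not just asserted; it does hold, by noting that as $\omega$ rotates the number of $S_k^1$-cell boundaries between $\omega z_1$ and $\omega z_2$ is $m$ or $m+1$ where $\theta=\tfrac{2\pi m}{k}+r$, with probabilities $1-\tfrac{kr}{2\pi}$ and $\tfrac{kr}{2\pi}$, so $\mathbb E\le 2\sin\bigl(\tfrac{\pi m}{k}\bigr)+\tfrac{kr}{2\pi}\bigl(2\sin\tfrac{\pi(m+1)}{k}-2\sin\tfrac{\pi m}{k}\bigr)\le\tfrac{2\pi m}{k}+r=\theta$. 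Also, in your $U(1)$ aside the same chain with $\tau_\omega\equiv\tau_0$ actually gives constant $\tfrac12+1=\tfrac32$, not $2$, which is exactly what is needed for Theorem \ref{thm:CheegerInequalityU(1)}.
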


We start with preparations for the proof of Theorem
\ref{thm:CheegerInequality}. Let $B_r(0):=\{z\in \mathbb{C}\mid |z|<
r\}$ be the open disk in $\mathbb{C}$ with center $0$ and radius
$r$. For $\theta\in [0, 2\pi)$ and $k \in {\mathbb N}$, we define the
following $k$ disjoint sectorial regions
\begin{equation}\label{eq:notationQ}
  Q_{j}^{\theta}:=\left\{re^{i\alpha}\in \overline{B_1(0)}\left|\,\, r\in (0,1], \alpha\in \left[\theta+\frac{2\pi j}{k}, \theta+\frac{2\pi (j+1)}{k}\right)\right.\right\},
\end{equation}
where $j=0,1,\ldots, k-1$. Then for any $t\in (0,1]$, we define the function
$Y_{t, \theta}: \overline{B_1(0)}\to \mathbb{C}$ as
\begin{equation}
  Y_{t, \theta}(z):=\left\{
    \begin{array}{ll}
      \xi^j, & \hbox{if $z\in Q_j^{\theta}\setminus B_{t}(0)$,} \\
      0, & \hbox{if $z\in B_{t}(0)$,}
    \end{array}
  \right.
\end{equation}
where $\xi$ denotes the $k$-th primitve root of unity.

\smallskip

The following lemma plays a key role.

\begin{Lem}\label{lemma:key}
  For any two points $z_1, z_2\in \overline{B_1(0)}$, we have
  \begin{equation}\label{eq:keylemma}
    \frac{1}{2\pi}\int_0^{2\pi}\int_0^1 \left| Y_{\sqrt{t}, \theta}(z_1)-
      Y_{\sqrt{t}, \theta}(z_2) \right|\, dt\, d\theta\, \leq \,
    2\, |z_1-z_2|\, (|z_1|+|z_2|).
  \end{equation}
\end{Lem}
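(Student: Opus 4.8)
The plan is to estimate the integrand $\left|Y_{\sqrt t,\theta}(z_1)-Y_{\sqrt t,\theta}(z_2)\right|$ pointwise in $(t,\theta)$, separating the contribution of the ``radial cut'' (a point lands in $B_{\sqrt t}(0)$ while the other does not, or both are outside) from that of the ``angular cut'' (both points are outside $B_{\sqrt t}(0)$ but fall into different sectors $Q_j^\theta$). Since $Y_{\sqrt t,\theta}$ takes values in $\{0\}\cup S_k^1$, the integrand is bounded by $2$ always, but we need the quantitative gain $|z_1-z_2|(|z_1|+|z_2|)$, so I would write, with $r_i=|z_i|$ and $r_{\min}=\min(r_1,r_2)$, $r_{\max}=\max(r_1,r_2)$,
\begin{equation*}
  \left|Y_{\sqrt t,\theta}(z_1)-Y_{\sqrt t,\theta}(z_2)\right|
  \le \mathbbm{1}_{\{t \le r_{\max}^2\}}\cdot\Bigl(\mathbbm{1}_{\{t > r_{\min}^2\}}\cdot 1 + \mathbbm{1}_{\{t\le r_{\min}^2\}}\cdot \mathbbm{1}_{\{z_1,z_2 \text{ in different sectors}\}}\cdot 1\Bigr)\cdot(\text{bound}),
\end{equation*}
where on the first piece the crude bound $2$ is used and on the second piece the bound $2$ is used as well.

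First I would handle the radial part: integrating $\mathbbm{1}_{\{r_{\min}^2 < t \le r_{\max}^2\}}$ over $t\in(0,1]$ and then over $\theta\in[0,2\pi)$ (where it is $\theta$-independent) gives $\frac{1}{2\pi}\cdot 2\pi\cdot(r_{\max}^2-r_{\min}^2) = (r_{\max}-r_{\min})(r_{\max}+r_{\min}) \le |z_1-z_2|(|z_1|+|z_2|)$, using $r_{\max}-r_{\min}=\bigl||z_1|-|z_2|\bigr|\le |z_1-z_2|$. With the prefactor $2$ this contributes $\le 2|z_1-z_2|(|z_1|+|z_2|)$; in fact this already gives the whole right-hand side, so the angular part must be absorbed into the remaining slack, which means I should be more careful and not spend the full factor here. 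The cleaner route: on the radial piece use that one of the two points has modulus $\le \sqrt t \le r_{\max}$, so the two points are separated by at least... — actually the right bookkeeping is to note $r_{\max}^2 - r_{\min}^2$ already has the desired form and keep the constant $2$ in reserve for the angular term below, i.e.\ bound the radial contribution by $(r_{\max}+r_{\min})\bigl||z_1|-|z_2|\bigr| \le |z_1-z_2|(|z_1|+|z_2|)$ with constant $1$.

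Next, the angular part. Fix $t \le r_{\min}^2$, so both $z_i$ lie outside $B_{\sqrt t}(0)$ with $r_i \ge \sqrt t$. Writing $z_i = r_i e^{i\alpha_i}$, the points lie in different sectors $Q_j^\theta$ precisely when $\theta$ lies (mod $2\pi/k$) in the arc between $-\alpha_1$ and $-\alpha_2$; the measure of such $\theta$ in $[0,2\pi)$ is at most $\min\bigl(|\alpha_1-\alpha_2|_{\mathrm{arc}},\, 2\pi/k\bigr)\le |\alpha_1-\alpha_2|_{\mathrm{arc}}$ where $|\cdot|_{\mathrm{arc}}$ denotes angular distance on the circle. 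Hence
\begin{equation*}
  \frac{1}{2\pi}\int_0^{2\pi}\mathbbm{1}_{\{z_1,z_2\text{ in different sectors of }\theta\}}\,d\theta \le \frac{|\alpha_1-\alpha_2|_{\mathrm{arc}}}{2\pi}.
\end{equation*}
Integrating the crude bound $2$ over $t\in(0,r_{\min}^2]$ and then over $\theta$ gives an angular contribution $\le 2\,r_{\min}^2\cdot \frac{|\alpha_1-\alpha_2|_{\mathrm{arc}}}{2\pi}\cdot\frac{1}{1}$; I would then bound $r_{\min}|\alpha_1-\alpha_2|_{\mathrm{arc}}$ by (a constant times) the chord length $|z_1-z_2|$ — precisely, if both points have modulus $\ge r_{\min}$ then the chord subtending angle $|\alpha_1-\alpha_2|_{\mathrm{arc}}$ at radius $r_{\min}$ has length $2 r_{\min}\sin(|\alpha_1-\alpha_2|_{\mathrm{arc}}/2)$, and elementary geometry gives $r_{\min}|\alpha_1-\alpha_2|_{\mathrm{arc}} \le \pi\, |z_1-z_2|$ (using $\sin x \ge (2/\pi)x$ on $[0,\pi/2]$, noting the arc distance is $\le\pi$). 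This yields the angular contribution $\le \frac{2}{2\pi}\cdot r_{\min}\cdot \pi|z_1-z_2|\cdot r_{\min}\cdot\frac{1}{r_{\min}} = r_{\min}|z_1-z_2| \le |z_1-z_2|\,|z_1|$. Adding the radial contribution $\le |z_1-z_2|(|z_1|+|z_2|)$ we would get $\le 2|z_1-z_2|(|z_1|+|z_2|)$, as claimed.

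\textbf{Main obstacle.} The delicate point is the constant bookkeeping: both the radial slice and the angular slice, estimated crudely, each already produce something of the size of the full right-hand side, so one cannot afford the factor $2$ twice. The fix is to sharpen one of the two estimates — most naturally the radial one, recognizing $r_{\max}^2-r_{\min}^2 = (r_{\max}+r_{\min})\bigl||z_1|-|z_2|\bigr|$ exactly rather than bounding it by $2(\dots)$ — so that the two contributions sum to exactly $2|z_1-z_2|(|z_1|+|z_2|)$. The other mildly technical point is the geometric inequality $r_{\min}\,|\alpha_1-\alpha_2|_{\mathrm{arc}} \le \pi|z_1-z_2|$ relating an arc length at the smaller radius to the actual chord between the two points at possibly different radii; this follows by first projecting the farther point radially inward to radius $r_{\min}$ (which does not increase the distance) and then applying $\mathrm{chord} \ge (2/\pi)\cdot\mathrm{arc}$ for the concentric case.
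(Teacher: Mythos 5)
There is a genuine gap in the angular-cut estimate, and it cannot be repaired without changing the approach. The measure of $\theta\in[0,2\pi)$ for which $z_1$ and $z_2$ land in \emph{different} sectors $Q_j^\theta$ is not at most $|\alpha_1-\alpha_2|_{\mathrm{arc}}$; it is $\min\bigl(k\,|\alpha_1-\alpha_2|_{\mathrm{arc}},\,2\pi\bigr)$. The separation condition is periodic in $\theta$ with period $2\pi/k$, contributes a set of measure about $\min(|\alpha_1-\alpha_2|_{\mathrm{arc}},2\pi/k)$ \emph{per period}, and there are $k$ periods in $[0,2\pi)$. Your estimate drops the factor $k$ when passing from ``$\theta\bmod 2\pi/k$'' to ``$\theta\in[0,2\pi)$''.

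Once that factor is restored, pairing the correct probability with the crude pointwise bound $|Y_{\sqrt t,\theta}(z_1)-Y_{\sqrt t,\theta}(z_2)|\le 2$ no longer closes. Take $k$ large, $z_1=1$, $z_2=e^{2\pi i/k}$, so $r_{\min}=1$, $|\alpha_1-\alpha_2|_{\mathrm{arc}}=2\pi/k$ and $|z_1-z_2|=2\sin(\pi/k)$. For every $\theta$ and every $t<1$ the two points lie in \emph{different} (adjacent) sectors, so the correct probability is $1$, and the angular contribution under your bookkeeping becomes $2\cdot r_{\min}^2\cdot 1=2$, which vastly exceeds the target $2|z_1-z_2|(|z_1|+|z_2|)\approx 8\pi/k$. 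The lemma is nonetheless true in this example because when the sectors are adjacent the integrand is not $2$ but $|1-\xi|=2\sin(\pi/k)$, small in exact inverse proportion to the large probability. This cancellation between ``high probability of separation for fine sectors'' and ``small jump across adjacent sectors'' is the essential content of the lemma and is lost if you replace the integrand by its uniform bound.

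The paper therefore does not decouple probability from integrand size. It integrates in $t$ first (obtaining $|\xi^{j_1}-\xi^{j_2}|\,|z_2|^2+(|z_1|^2-|z_2|^2)$, which matches your radial/angular split), and then computes the $\theta$-average of $|\xi^{j_1(\theta)}-\xi^{j_2(\theta)}|$ directly: writing $\alpha_{z_1z_2}\in[2\pi l/k,\,2\pi(l+1)/k)$, the value is $|1-\xi^l|$ or $|1-\xi^{l+1}|$ with explicit $\theta$-weights, the average is $\le 2|1-\xi^l|$, and this is controlled via $|z_1-z_2|\ge|z_2|\,|1-\xi^l|$. Your radial estimate and the chord--arc geometry are fine; it is the angular average that needs this sharper treatment rather than the crude bound by $2$.
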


\begin{proof} W.l.o.g., we can assume that $|z_1| \ge |z_2|$ with
  $z_1\in Q_{j_1}^{\theta}$ and $z_2\in Q_{j_2}^{\theta}$. Then we
  have
  \begin{equation}
    |Y_{\sqrt{t}, \theta}(z_1)-Y_{\sqrt{t}, \theta}(z_2)|=\left\{
      \begin{array}{ll}
        |\xi^{j_1}-\xi^{j_2}|, & \hbox{if $\sqrt{t}\leq |z_2|$,} \\
        1, & \hbox{if $|z_2|<\sqrt{t}\leq |z_1|$,} \\
        0, & \hbox{if $|z_1|<\sqrt{t}$.}
      \end{array}
    \right.
  \end{equation}
  Hence,
  \begin{equation}
    \int_0^1\left| Y_{\sqrt{t}, \theta}(z_1)-Y_{\sqrt{t}, \theta}(z_2)\right|\, dt=|\xi^{j_1}-\xi^{j_2}|\cdot|z_2|^2+(|z_1|^2-|z_2|^2).
  \end{equation}
  Let $\alpha_{z_1z_2}\in [0, \pi]$ be the angle between the two rays
  joining $z_1, z_2$ to the origin. If $2\pi l/k\leq \alpha_{z_1z_2}<
  2\pi (l+1)/k$ for some integer $0\leq l<k/2$, the term
  $|\xi^{j_1}-\xi^{j_2}|$ is equal to either $|1-\xi^l|$ or
  $|1-\xi^{l+1}|$, hence we calculate
  \begin{equation*}
  \frac{1}{2\pi}\int_0^{2\pi}\int_0^1 \left| Y_{\sqrt{t},
        \theta}(z_1)- Y_{\sqrt{t}, \theta}(z_2) \right|\, dt\, d\theta =
  \phantom{\frac{k\left(\alpha_{z_1z_2}-2\pi l/k\right)}{2\pi}
    \left(|1-\xi^{l+1}|\cdot|z_2|^2 + |z_1|^2 - |z_2|^2 \right)}
  \end{equation*}
  \begin{eqnarray*}
  &=& \left(\frac{k\alpha_{z_1z_2}}{2\pi}-l\right)
    \left(|1-\xi^{l+1}|\cdot|z_2|^2 + |z_1|^2 - |z_2|^2 \right)\\
  && +\left(l+1-\frac{k\alpha_{z_1z_2}}{2\pi}\right)\cdot
    \left(|1-\xi^{l}|\cdot|z_2|^2+|z_1|^2-|z_2|^2\right)\\
  &\le& 2|1-\xi^l|\cdot|z_2|^2\, +\, \left(|z_1|^2-|z_2|^2\right),
  \end{eqnarray*}
  where we used $|1-\xi^{l+1}|\leq |1-\xi|+|1-\xi^l|\leq 2|1-\xi^l|$.
  Observe that we have
  \begin{equation}\label{eq:keyest1}
    |z_1-z_2|\geq \left|\frac{z_1}{|z_1|}|z_2|-z_2\right|\geq
    |z_2|\cdot|1-\xi^l|
  \end{equation}
  and
  \begin{equation}\label{eq:keyest2}
    |z_1|^2-|z_2|^2=(|z_1|-|z_2|)\cdot(|z_1|+|z_2|)\leq
    |z_1-z_2|\cdot(|z_1|+|z_2|).
  \end{equation}
  Therefore, we obtain
  \begin{equation}
    \frac{1}{2\pi}\int_0^{2\pi}\int_0^1 \left|Y_{\sqrt{t}, \theta}(z_1)-
    Y_{\sqrt{t}, \theta}(z_2)\right|\, dt\, d\theta \leq
    2|z_1-z_2|\cdot|z_2|+|z_1-z_2|\cdot(|z_1|+|z_2|),
  \end{equation}
  which implies (\ref{eq:keylemma}).
\end{proof}

Lemma \ref{lemma:key} can be considered as an extension of \cite[Lemma
5]{AtayLiu14} and \cite[Section 3.2]{Trevisan2012}. The novel point
here is that we introduce an extra degree of randomness in the
argument of $z$ in order to handle the difficulty caused by cyclic
signatures. Actually, this provides a random $k$-partition
parametrized by an angle $\theta$, which will be discussed further in
Section \ref{section:Direct graphs}. This lemma is a version of a
coarea inequality, which becomes transparent from the following direct
consequence.

For any non-zero function $f: V\to \mathbb{C}$ defined on the vertices
of a graph $G$ and any $t\in [0, \max_{u\in V}|f(u)|]$, we define the
following non-empty subset of $V$:
\begin{equation}\label{eq:notationVft}
  V^f(t):=\{u\in V \, \mid \, t \leq |f(u)| \}.
\end{equation}

\begin{Lem}[Coarea inequality]\label{lemma:coarea}
  Let $s: E^{or}\to S_k^1$ be a signature of $G$. For any
  function $f:V\to \mathbb{C}$ with $\max_{u\in V}|f(u)|=1$, we have
  \begin{multline}\label{eq:coarea inequality}
    \int_0^1 \iota^s\left(V^f(\sqrt{t})\right)\, + \,
      \left|E\left(V^f(\sqrt{t}), (V^f(\sqrt{t}))^c
        \right)\right| \, dt \\ \leq 2\sum_{\{u,v\}\in
      E}w_{uv}\,
    \left|f(u)-s_{uv}f(v)\right|\cdot\left(|f(u)|+|f(v)|\right).
  \end{multline}
\end{Lem}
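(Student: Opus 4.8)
The plan is to deduce the coarea inequality from Lemma~\ref{lemma:key}, applied edge by edge to the pair of points $z_1=f(u)$ and $z_2=s_{uv}f(v)$. The first thing I would record is an \emph{equivariance} property of the level maps $Y_{\sqrt t,\theta}$ under the signature. Since $s$ takes values in $S_k^1$, multiplication by $s_{uv}=\xi^m$ rotates $\mathbb C$ by the integer multiple $2\pi m/k$ of the sector opening angle $2\pi/k$; hence it permutes the sectors $Q_0^\theta,\dots,Q_{k-1}^\theta$ cyclically and preserves the disk $B_{\sqrt t}(0)$ (it preserves moduli). Consequently, for all $t\in(0,1]$, $\theta\in[0,2\pi)$ and $z\in\overline{B_1(0)}$,
\[
  Y_{\sqrt t,\theta}(s_{uv}z)=s_{uv}\,Y_{\sqrt t,\theta}(z).
\]
Writing $g_{t,\theta}:=Y_{\sqrt t,\theta}\circ f\colon V\to\{0\}\cup S_k^1$, this yields $\bigl|Y_{\sqrt t,\theta}(f(u))-Y_{\sqrt t,\theta}(s_{uv}f(v))\bigr|=\bigl|g_{t,\theta}(u)-s_{uv}g_{t,\theta}(v)\bigr|$.

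Fix an edge $\{u,v\}\in E$. Both $f(u)$ and $s_{uv}f(v)$ have modulus at most $\max_{w\in V}|f(w)|=1$, so they lie in $\overline{B_1(0)}$ and Lemma~\ref{lemma:key} applies with $z_1=f(u)$, $z_2=s_{uv}f(v)$, giving
\[
  \frac{1}{2\pi}\int_0^{2\pi}\!\!\int_0^1 \bigl|g_{t,\theta}(u)-s_{uv}g_{t,\theta}(v)\bigr|\,dt\,d\theta
  \;\le\; 2\,|f(u)-s_{uv}f(v)|\,\bigl(|f(u)|+|f(v)|\bigr),
\]
using $|s_{uv}f(v)|=|f(v)|$. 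Multiplying by $w_{uv}$ and summing over $\{u,v\}\in E$ reproduces exactly the right-hand side of \eqref{eq:coarea inequality}.

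It then remains to bound, pointwise in $(t,\theta)$, the integrand $\sum_{\{u,v\}\in E}w_{uv}\bigl|g_{t,\theta}(u)-s_{uv}g_{t,\theta}(v)\bigr|$ from below by $\iota^s\bigl(V^f(\sqrt t)\bigr)+\bigl|E\bigl(V^f(\sqrt t),(V^f(\sqrt t))^c\bigr)\bigr|$; the lemma then follows by dividing by $2\pi$, integrating over $\theta\in[0,2\pi)$ and $t\in[0,1]$, and interchanging summation and integration (all terms are nonnegative). Fix $t$ and put $V_t:=V^f(\sqrt t)$. If $u\in V_t$ then $|f(u)|\ge\sqrt t$, so $f(u)\notin B_{\sqrt t}(0)$ and $g_{t,\theta}(u)\in S_k^1$; if $u\notin V_t$ then $g_{t,\theta}(u)=0$. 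Split $E$ into the edges of the subgraph induced by $V_t$, the edges joining $V_t$ to $V_t^c$, and the edges inside $V_t^c$. On the first family, $g_{t,\theta}|_{V_t}$ is an admissible switching function $V_t\to S_k^1$, so by Definition~\ref{def:frustration index} these edges contribute at least $\iota^s(V_t)$. Each cut edge $\{u,v\}$ with $u\in V_t$, $v\in V_t^c$ contributes $w_{uv}\bigl|g_{t,\theta}(u)-s_{uv}\cdot 0\bigr|=w_{uv}$, so the cut edges contribute $|E(V_t,V_t^c)|$. The remaining edges contribute $0$. Adding these gives the required lower bound.

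The only genuinely delicate point — and the one I would flag as the crux — is the equivariance identity $Y_{\sqrt t,\theta}(s_{uv}z)=s_{uv}Y_{\sqrt t,\theta}(z)$. It is precisely where the \emph{cyclic} structure of the signature group enters: it lets the signature be moved ``inside'' the level map, so that Lemma~\ref{lemma:key} — stated purely in terms of the geometry of $\overline{B_1(0)}$ — can be invoked at the translated point $s_{uv}f(v)$ rather than at $f(v)$. Everything else is bookkeeping (Fubini, the trichotomy of edges, and the definition of the frustration index as a minimum over switching functions); one should also note the harmless fact that $t=0$ is a null set, so the non-definition of $Y_{0,\theta}$ is irrelevant.
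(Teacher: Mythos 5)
Your proof is correct and follows essentially the same route as the paper: the equivariance $Y_{\sqrt t,\theta}(s_{uv}z)=s_{uv}Y_{\sqrt t,\theta}(z)$, the pointwise lower bound on $\sum_{\{u,v\}}w_{uv}\bigl|Y_{\sqrt t,\theta}(f(u))-s_{uv}Y_{\sqrt t,\theta}(f(v))\bigr|$ by $\iota^s(V^f(\sqrt t))+|E(V^f(\sqrt t),(V^f(\sqrt t))^c)|$ via the edge trichotomy and the definition of the frustration index, and Lemma~\ref{lemma:key} applied to $z_1=f(u)$, $z_2=s_{uv}f(v)$. You are somewhat more explicit than the paper (justifying the equivariance and spelling out the third, zero-contribution class of edges), but the argument is the same.
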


\begin{proof}
  First observe that
  \begin{multline}
    \frac{1}{2\pi}\int_0^{2\pi}\int_0^1\sum_{\{u,v\}\in E}w_{uv}\,
    \left| Y_{\sqrt{t}, \theta}(f(u))-
      s_{uv}Y_{\sqrt{t}, \theta}(f(v))\right|\, dt\, d\theta\\
    \geq \int_0^1 \iota^s\left(V^f(\sqrt{t})\right)\, +\,
      \left|E(V^f(\sqrt{t}),
        (V^f(\sqrt{t}))^c)\right|\, dt.\label{eq:estimatemodify}
  \end{multline}
In fact, the summation in the integrand of the LHS of the above inequality can be split into two parts: The summation over edges connecting two vertices from $V^f(\sqrt{t})$ and $V^f(\sqrt{t})^c$, respectively. This part equals to $\left|E(V^f(\sqrt{t}), (V^f(\sqrt{t}))^c)\right|$; The summation over edges connecting two vertices from $V^f(\sqrt{t})$. This part is bounded from below by $\iota^s\left(V^f(\sqrt{t})\right)$ by Definition \ref{def:frustration index}.
  
Notice further that
  \begin{equation}
    s_{uv}Y_{\sqrt{t},\theta}(f(v))=Y_{\sqrt{t},\theta}(s_{uv}f(v)),
  \end{equation}
the inequality (\ref{eq:coarea inequality}) follows directly from
  Lemma \ref{lemma:key}.
\end{proof}

The Coarea Inequality is particularly useful to prove Lemma
\ref{lemma:clusteringI}.

\begin{Lem}\label{lemma:clusteringI}
  Let $s: E^{or}\to S_k^1$ be a signature of $G$ and $f:V\to
  \mathbb{C}$ be a nonzero function. Then there exists $t'\in [0,
  \max_{u\in V}|f(u)|^2]$ such that
  \begin{equation} \label{eq:phisR}
    \phi^s_{\mu}(V^f(\sqrt{t'}))\leq 2\sqrt{2d_{\mu}\mathcal{R}_{\mu}^s(f)},
  \end{equation}
  where $\mathcal{R}_{\mu}^s(f)$ was defined in \eqref{eq:Rayleigh}.
\end{Lem}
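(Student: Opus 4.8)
The plan is to derive (\ref{eq:phisR}) from the Coarea Inequality (Lemma \ref{lemma:coarea}) by an averaging argument. First I would normalize: since both $\phi_\mu^s$ and $\mathcal{R}_\mu^s$ are scale-invariant in $f$, I may assume $\max_{u\in V}|f(u)|=1$, so that Lemma \ref{lemma:coarea} applies directly. Then the left-hand side of (\ref{eq:coarea inequality}) is $\int_0^1 \bigl(\iota^s(V^f(\sqrt t)) + |E(V^f(\sqrt t),(V^f(\sqrt t))^c)|\bigr)\,dt$, and I want to compare this to $\int_0^1 \mathrm{vol}_\mu(V^f(\sqrt t))\,dt$.

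The key observation is that $\int_0^1 \mathrm{vol}_\mu(V^f(\sqrt t))\,dt = \sum_{u\in V}\mu(u)\int_0^1 \mathbbm{1}_{\{\sqrt t \le |f(u)|\}}\,dt = \sum_{u\in V}\mu(u)|f(u)|^2 = \|f\|_\mu^2$. So if no $t'$ worked, i.e. $\phi_\mu^s(V^f(\sqrt t)) > 2\sqrt{2d_\mu \mathcal{R}_\mu^s(f)}$ for a.e. $t$ in $[0,\max|f|^2]=[0,1]$, then $\iota^s(V^f(\sqrt t)) + |E(\cdots)| > 2\sqrt{2d_\mu \mathcal{R}_\mu^s(f)}\,\mathrm{vol}_\mu(V^f(\sqrt t))$ for a.e. such $t$; integrating gives
\begin{equation*}
\int_0^1 \iota^s(V^f(\sqrt t)) + |E(V^f(\sqrt t),(V^f(\sqrt t))^c)|\,dt \;>\; 2\sqrt{2d_\mu \mathcal{R}_\mu^s(f)}\,\|f\|_\mu^2.
\end{equation*}
Combining this with Lemma \ref{lemma:coarea} forces
\begin{equation*}
2\sqrt{2d_\mu \mathcal{R}_\mu^s(f)}\,\|f\|_\mu^2 \;<\; 2\sum_{\{u,v\}\in E}w_{uv}|f(u)-s_{uv}f(v)|\cdot(|f(u)|+|f(v)|),
\end{equation*}
so it remains to bound the right-hand side from above by $2\sqrt{2d_\mu \mathcal{R}_\mu^s(f)}\,\|f\|_\mu^2$ to reach a contradiction.

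The main work, and the step I expect to be the real obstacle, is the Cauchy–Schwarz estimate on that edge sum. I would apply Cauchy–Schwarz to split
\begin{equation*}
\sum_{\{u,v\}\in E}w_{uv}|f(u)-s_{uv}f(v)|\cdot(|f(u)|+|f(v)|) \le \Bigl(\sum_{\{u,v\}\in E}w_{uv}|f(u)-s_{uv}f(v)|^2\Bigr)^{1/2}\Bigl(\sum_{\{u,v\}\in E}w_{uv}(|f(u)|+|f(v)|)^2\Bigr)^{1/2}.
\end{equation*}
The first factor is $\bigl(\mathcal{R}_\mu^s(f)\bigr)^{1/2}\|f\|_\mu$. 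For the second, I would use $(|f(u)|+|f(v)|)^2 \le 2(|f(u)|^2+|f(v)|^2)$ and then $\sum_{\{u,v\}\in E}w_{uv}(|f(u)|^2+|f(v)|^2) = \sum_{u\in V}d_u|f(u)|^2 \le d_\mu \sum_{u\in V}\mu(u)|f(u)|^2 = d_\mu\|f\|_\mu^2$, giving the second factor $\le \sqrt{2d_\mu}\,\|f\|_\mu$. Multiplying, the edge sum is at most $\sqrt{2d_\mu \mathcal{R}_\mu^s(f)}\,\|f\|_\mu^2$, and doubling yields exactly the bound needed to contradict the displayed strict inequality. Hence some $t'\in[0,\max_u|f(u)|^2]$ must satisfy (\ref{eq:phisR}). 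One small point to handle carefully is the degenerate case where $\mathrm{vol}_\mu(V^f(\sqrt t))=0$, which cannot occur for $t$ in the open interval since $V^f(\sqrt t)$ is nonempty whenever $\sqrt t\le\max_u|f(u)|$, so the division is legitimate.
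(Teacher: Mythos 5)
Your proof is correct and follows essentially the same route as the paper's: normalize to $\max_u|f(u)|=1$, combine the coarea inequality with the identity $\int_0^1\mathrm{vol}_\mu(V^f(\sqrt t))\,dt=\|f\|_\mu^2$ to get an averaging bound, and then apply Cauchy--Schwarz together with $\sum_u d_u|f(u)|^2\le d_\mu\|f\|_\mu^2$. The only cosmetic difference is that the paper phrases the averaging step directly (there exists $t'$ with $\phi_\mu^s(V^f(\sqrt{t'}))\le I$ where $I$ is the ratio of integrals) rather than by contradiction, and your closing remark is even slightly stronger than needed since $V^f(\sqrt t)$ is nonempty for all $t\in[0,1]$ because the definition of $V^f$ uses a non-strict inequality.
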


\begin{proof}
  Since $f$ is non-zero, we may assume (after rescaling) that
  $\max_{u\in V}|f(u)|=1$. Moreover,
  \begin{equation}
    |Y_{\sqrt{t},\theta}(f(u))|=\left\{
      \begin{array}{ll}
        1, & \hbox{if $|f(u)|\geq \sqrt{t}$,} \\
        0, & \hbox{otherwise,}
      \end{array}
    \right.
  \end{equation}
  implies
  \begin{equation}\label{eq:area formula}
    \int_0^1\mathrm{vol}_{\mu}(V^f(\sqrt{t}))\, dt = \int_0^1\sum_{u\in V}\left|Y_{\sqrt{t},\theta}(f(u))\right|\, \mu(u)\, dt = \sum_{u\in V}|f(u)|^2\mu(u).
  \end{equation}
  Now we consider the quotient
  \begin{equation}
    I:=\frac{\int_0^1 \iota^s(V^f(\sqrt{t}))+\left|E(V^f(\sqrt{t}), (V^f(\sqrt{t}))^c)\right|\, dt}{\int_0^1\mathrm{vol}_{\mu}(V^f(\sqrt{t}))dt}.
  \end{equation}
  Therefore, there exists $t'\in [0,1]$ such that
  \begin{equation}\label{eq:lowerI}
    I\geq \phi^s_{\mu}(V^f(\sqrt{t'})).
  \end{equation}
  On the other hand, Lemma \ref{lemma:coarea}, (\ref{eq:area
    formula}), and the Cauchy-Schwarz inequality imply
  \begin{align*}
    I&\leq \frac{2\sum_{\{u,v\}\in E}w_{uv}|f(u)-s_{uv}f(v)|\cdot(|f(u)|+|f(v)|)}{\sum_{u\in V}|f(u)|^2\mu(u)}\\
    &\leq \frac{2\sqrt{\sum_{\{u,v\}\in
          E}w_{uv}|f(u)-s_{uv}f(v)|^2}\, \sqrt{\sum_{\{u,v\}\in
          E}w_{uv}(|f(u)|+|f(v)|)^2}}{\sum_{u\in V}|f(u)|^2\mu(u)}.
  \end{align*}
  Since
  \begin{align*}
    \sum_{\{u,v\}\in E}w_{uv}\left(|f(u)|+|f(v)|\right)^2&\leq 2\sum_{\{u,v\}\in E}w_{uv}(|f(u)|^2+|f(v)|^2)\\
    &=2\sum_{u\in V}\sum_{v,v\sim u}w_{uv}|f(u)|^2,
  \end{align*}
  we conclude that
  \begin{equation}\label{eq:upperI}
    I\leq 2\sqrt{2d_{\mu}\mathcal{R}_{\mu}^s(f)}.
  \end{equation}

  Combining the estimates (\ref{eq:lowerI}) and (\ref{eq:upperI})
  proves the lemma.
\end{proof}

\begin{proof}[Proof of Theorem \ref{thm:CheegerInequality}]
  The upper estimate in (\ref{eq:Cheeger inequality}) follows from
  Lemma \ref{lemma:clusteringI} by setting $f$ to be the eigenfunction
  corresponding to the eigenvalue $\lambda_1(\Delta_{\mu}^s)$.

  It remains to prove the lower estimate of $h_1^s(\mu)$ in
  (\ref{eq:Cheeger inequality}). Let $\widetilde V$ be the subset of~®$V$ that
  achieves the Cheeger constant $h_1^s(\mu)$ in (\ref{eq:Cheeger constant})
  with induced subgraph $(\widetilde V, \widetilde E)$ and
  $\widetilde \tau: \widetilde V\to S_k^1$ be the switching function that
  achieves the frustration index $\iota^s(\widetilde V)$
  in (\ref{eq:frustration index}). Define the function $\widetilde f: V\to \mathbb{C}$
  via:
  \begin{equation}
	\widetilde f(u):=
   		\begin{cases}
			\widetilde \tau(u), & \text{if $u\in \widetilde V$,} \\
        	0, 					& \text{otherwise.}
		\end{cases}
  \end{equation}
  Using (\ref{eq:minmax}) and the estimate
  $|\widetilde \tau(u)-s_{uv}\widetilde \tau(v)|\leq 2$, we obtain
  \begin{align}
    \lambda_1(\Delta_{\mu}^s)&\leq \mathcal{R}_{\mu}^s({\widetilde f})\notag\\
	& =   \frac{\sum_{\{u,v\}\in \widetilde E}w_{uv}|\widetilde \tau(u)-s_{uv}\widetilde \tau(v)|^2+|E(\widetilde V,  \widetilde V^c)|}
			   {\mathrm{vol}_{\mu}(\widetilde V)}\notag\\
    &\leq \frac{2\iota^s(\widetilde V)+|E(\widetilde V,\widetilde V^c)|}
			   {\mathrm{vol}_{\mu}(\widetilde V)}\notag\\
	& \leq 2h_1^s(\mu).\label{eq:lowerbound proof}
  \end{align}
\end{proof}

\begin{Rmk}
  Since the signature is $S_k^1$-valued, the constant $2$ in
  (\ref{eq:lowerbound proof}) can be slightly improved to be
  $|1-\xi^{(k-1)/2}|$ when $k$ is odd.
\end{Rmk}

For $\Gamma=U(1)$ we have the following Cheeger's inequality.

\begin{Thm}\label{thm:CheegerInequalityU(1)}
  Let $G$ be a finite graph with signature $s: E^{or} \to U(1)$. Then
  \begin{equation}\label{eq:Cheeger inequalityU(1)}
    \frac{1}{2}\lambda_1(\Delta_{\mu}^s)\leq h_1^s(\mu)\leq \frac{3}{2}\sqrt{2d_{\mu}\lambda_1(\Delta_{\mu}^s)}.
  \end{equation}
\end{Thm}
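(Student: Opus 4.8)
The plan is to follow the proof of Theorem~\ref{thm:CheegerInequality} step by step, the only substantial change being that the sectorial rounding functions $Y_{t,\theta}$ are replaced by the single \emph{radial normalization} map $\widetilde Y_t\colon\overline{B_1(0)}\to\mathbb{C}$ with $\widetilde Y_t(z)=z/|z|$ when $z\ne 0$ and $|z|\ge t$, and $\widetilde Y_t(z)=0$ otherwise. Since $U(1)$ is a connected group, the argument of a nonzero complex number already is an element of $U(1)$, so no auxiliary averaging over an angle $\theta$ is needed; this is the main structural simplification over the $S_k^1$ case. The lower bound $\frac12\lambda_1(\Delta_\mu^s)\le h_1^s(\mu)$ requires no change at all: with $\widetilde V$ achieving $h_1^s(\mu)$ and $\widetilde\tau\colon\widetilde V\to U(1)$ achieving $\iota^s(\widetilde V)$, the function $\widetilde f$ equal to $\widetilde\tau$ on $\widetilde V$ and to $0$ elsewhere has $|\widetilde\tau(u)|=1$, so the chain of estimates~(\ref{eq:lowerbound proof}) -- using $|\widetilde\tau(u)-s_{uv}\widetilde\tau(v)|\le 2$ and the min-max principle~(\ref{eq:minmax}) -- applies verbatim.

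For the upper bound I would first prove the $U(1)$-analogue of Lemma~\ref{lemma:key}: for all $z_1,z_2\in\overline{B_1(0)}$,
\[
  \int_0^1\bigl|\widetilde Y_{\sqrt t}(z_1)-\widetilde Y_{\sqrt t}(z_2)\bigr|\,dt
  \ \le\ \tfrac32\,|z_1-z_2|\bigl(|z_1|+|z_2|\bigr).
\]
Assuming $|z_1|\ge|z_2|$ and writing $\alpha\in[0,\pi]$ for the angle between the rays from $0$ to $z_1$ and to $z_2$, a direct evaluation (exactly as in the proof of Lemma~\ref{lemma:key}, but with the $\theta$-dependent term $|\xi^{j_1}-\xi^{j_2}|$ replaced by the deterministic $|\tfrac{z_1}{|z_1|}-\tfrac{z_2}{|z_2|}|=2\sin(\alpha/2)$) gives
\[
  \int_0^1\bigl|\widetilde Y_{\sqrt t}(z_1)-\widetilde Y_{\sqrt t}(z_2)\bigr|\,dt = 2\sin(\alpha/2)\,|z_2|^2+\bigl(|z_1|^2-|z_2|^2\bigr).
\]
One then invokes the geometric estimate $|z_1-z_2|\ge\bigl|\tfrac{z_1}{|z_1|}|z_2|-z_2\bigr|$ of~(\ref{eq:keyest1}), whose right-hand side equals $2\sin(\alpha/2)\,|z_2|$, together with $|z_1|^2-|z_2|^2\le|z_1-z_2|(|z_1|+|z_2|)$ from~(\ref{eq:keyest2}), to bound the integral by $|z_1-z_2|\bigl(|z_1|+2|z_2|\bigr)$, and finally uses $|z_1|+2|z_2|\le\tfrac32(|z_1|+|z_2|)$ since $|z_2|\le|z_1|$. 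This last bookkeeping step is precisely where the constant $\tfrac32$ in~(\ref{eq:Cheeger inequalityU(1)}) originates; note that it is smaller than the constant $2$ of Theorem~\ref{thm:CheegerInequality}, so one cannot obtain~(\ref{eq:Cheeger inequalityU(1)}) merely by approximating $s$ by $S_k^1$-valued signatures and letting $k\to\infty$.

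With this lemma in hand the remaining steps are routine adaptations of Lemmata~\ref{lemma:coarea} and~\ref{lemma:clusteringI}. Since $|\widetilde Y_{\sqrt t}(f(u))|=1$ exactly when $|f(u)|\ge\sqrt t$, the level set $V^f(\sqrt t)$ is unchanged, the area formula $\int_0^1\vol_\mu(V^f(\sqrt t))\,dt=\sum_{u\in V}|f(u)|^2\mu(u)$ still holds, and -- using $\widetilde Y_{\sqrt t}(s_{uv}f(v))=s_{uv}\widetilde Y_{\sqrt t}(f(v))$ for $s_{uv}\in U(1)$ and the fact that the restriction of $\widetilde Y_{\sqrt t}(f)$ to $V^f(\sqrt t)$ is a $U(1)$-valued switching function of the induced subgraph -- the argument of Lemma~\ref{lemma:coarea} yields
\[
  \int_0^1 \iota^s\bigl(V^f(\sqrt t)\bigr)+\bigl|E\bigl(V^f(\sqrt t),(V^f(\sqrt t))^c\bigr)\bigr|\,dt
  \ \le\ \tfrac32\sum_{\{u,v\}\in E}w_{uv}\,|f(u)-s_{uv}f(v)|\bigl(|f(u)|+|f(v)|\bigr).
\]
Feeding this into the Cauchy--Schwarz computation of Lemma~\ref{lemma:clusteringI} produces a threshold $t'$ with $\phi_\mu^s\bigl(V^f(\sqrt{t'})\bigr)\le\tfrac32\sqrt{2d_\mu\,\mathcal{R}_\mu^s(f)}$, and choosing $f$ to be an eigenfunction for $\lambda_1(\Delta_\mu^s)$ gives the asserted upper bound $h_1^s(\mu)\le\tfrac32\sqrt{2d_\mu\lambda_1(\Delta_\mu^s)}$. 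I expect the only genuinely delicate point to be the very first one: recognizing that for $\Gamma=U(1)$ the single map $z\mapsto z/|z|$ plays the role that the whole family $\{Y_{t,\theta}\}_{\theta}$ played for $\Gamma=S_k^1$, and then tracking the constants carefully enough to land on $\tfrac32$ rather than $2$.
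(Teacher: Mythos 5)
Your proposal is correct and follows essentially the same route as the paper: you define the same radial cutoff map (the paper calls it $X_t$, you call it $\widetilde Y_t$), prove the same key lemma with the same constant $3/2$ via the same estimates (\ref{eq:keyest1}) and (\ref{eq:keyest2}), and the only cosmetic difference is that you bound the integral by $|z_1-z_2|(|z_1|+2|z_2|)$ before invoking $|z_2|\le|z_1|$, whereas the paper applies $|z_2|\le\frac12(|z_1|+|z_2|)$ directly to the first term -- these are trivially the same. The remaining coarea/Cauchy--Schwarz/eigenfunction steps you sketch are exactly what the paper omits with the phrase that the proof is "very similar" to that of Theorem~\ref{thm:CheegerInequality}.
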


The constant in the upper bound of (\ref{eq:Cheeger inequalityU(1)}) is
slightly better than the constant in (\ref{eq:Cheeger
  inequality}). This is due to Lemma \ref{lemma:keyU(1)} below.

For any $t\in (0,1]$, we define $X_{t}: \overline{B_1(0)}\to
\mathbb{C}$ as
\begin{equation}
  X_t(z):=\left\{
    \begin{array}{ll}
      z/|z|, & \hbox{if $z\in \overline{B_1(0)}\setminus B_{t}(0)$,} \\
      0, & \hbox{if $z\in B_{t}(0)$.}
    \end{array}
  \right.
\end{equation}

\begin{Lem}\label{lemma:keyU(1)}
  For any two points $z_1,z_2\in \overline{B_1(0)}$, we have
  \begin{equation}\label{eq:keyU(1)}
    \int_0^1\left|X_{\sqrt{t}}(z_1)-X_{\sqrt{t}}(z_2)\right|\, dt\leq \frac{3}{2}|z_1-z_2|(|z_1|+|z_2|).
  \end{equation}
\end{Lem}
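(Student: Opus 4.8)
The plan is to imitate the computation in Lemma \ref{lemma:key}, but exploiting the fact that the target group is now all of $U(1)$ rather than the finite set $S_k^1$; this is what lets us replace the ``quantized'' jump $|\xi^{j_1}-\xi^{j_2}|$ by the exact chord length $|z_1/|z_1| - z_2/|z_2||$ and gain the better constant $3/2$ in place of $2$. First I would, without loss of generality, assume $|z_1|\ge|z_2|$, so that for a given threshold $\sqrt t$ the function $X_{\sqrt t}$ sends $z_i$ to $z_i/|z_i|$ once $\sqrt t \le |z_i|$ and to $0$ otherwise. As in the previous lemma this gives a three-case description:
\begin{equation*}
  \bigl|X_{\sqrt t}(z_1)-X_{\sqrt t}(z_2)\bigr|
  =\begin{cases}
    \bigl|\tfrac{z_1}{|z_1|}-\tfrac{z_2}{|z_2|}\bigr|, & \sqrt t\le|z_2|,\\[1mm]
    1, & |z_2|<\sqrt t\le|z_1|,\\[1mm]
    0, & |z_1|<\sqrt t.
  \end{cases}
\end{equation*}

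Next I would integrate in $t$ over $[0,1]$. Since $\sqrt t \le |z_2| \iff t\le|z_2|^2$ and similarly for $z_1$, the integral collapses to the elementary sum
\begin{equation*}
  \int_0^1\bigl|X_{\sqrt t}(z_1)-X_{\sqrt t}(z_2)\bigr|\,dt
  =\Bigl|\tfrac{z_1}{|z_1|}-\tfrac{z_2}{|z_2|}\Bigr|\cdot|z_2|^2 + \bigl(|z_1|^2-|z_2|^2\bigr).
\end{equation*}
(There is no $\theta$-averaging here, which is why the $U(1)$ estimate is cleaner.) The second summand is handled exactly as in \eqref{eq:keyest2}: $|z_1|^2-|z_2|^2 = (|z_1|-|z_2|)(|z_1|+|z_2|) \le |z_1-z_2|\,(|z_1|+|z_2|)$. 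For the first summand I would use the analogue of \eqref{eq:keyest1}, namely $|z_1-z_2| \ge \bigl|\tfrac{z_1}{|z_1|}|z_2| - z_2\bigr| = |z_2|\cdot\bigl|\tfrac{z_1}{|z_1|}-\tfrac{z_2}{|z_2|}\bigr|$, which gives $\bigl|\tfrac{z_1}{|z_1|}-\tfrac{z_2}{|z_2|}\bigr|\cdot|z_2|^2 \le |z_1-z_2|\cdot|z_2|$.

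Combining the two bounds yields $\int_0^1|X_{\sqrt t}(z_1)-X_{\sqrt t}(z_2)|\,dt \le |z_1-z_2|\,|z_2| + |z_1-z_2|\,(|z_1|+|z_2|) = |z_1-z_2|\,(|z_1|+2|z_2|)$, and since $|z_2|\le|z_1|$ one has $|z_1|+2|z_2| \le \tfrac32(|z_1|+|z_2|)$, giving \eqref{eq:keyU(1)}. I should also note the degenerate cases where $z_1=0$ or $z_2=0$: then $z_i/|z_i|$ is undefined, but the corresponding term drops out of the integral (the region $B_t(0)$ maps to $0$), so the estimate holds trivially; I would dispatch this in a sentence. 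I do not expect any genuine obstacle here — the only point requiring a little care is the chord-length inequality $|z_1-z_2|\ge|z_2|\,|\frac{z_1}{|z_1|}-\frac{z_2}{|z_2|}|$, which is the statement that projecting $z_1$ radially onto the circle of radius $|z_2|$ does not increase its distance to $z_2$, and the final numerical step $|z_1|+2|z_2|\le\frac32(|z_1|+|z_2|)$, valid precisely because we arranged $|z_2|\le|z_1|$.
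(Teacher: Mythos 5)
Your proof is correct and follows essentially the same route as the paper's: the same three-case decomposition of $|X_{\sqrt t}(z_1)-X_{\sqrt t}(z_2)|$, the same evaluation of the $t$-integral, and the same two estimates \eqref{eq:keyest1} and \eqref{eq:keyest2}. The only cosmetic difference is that you collect terms into $|z_1-z_2|(|z_1|+2|z_2|)$ before invoking $|z_2|\le|z_1|$, while the paper applies $|z_2|\le\tfrac12(|z_1|+|z_2|)$ to the first summand directly; the two are algebraically identical.
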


\begin{proof}
  W.l.o.g., we assume that $|z_1|\ge|z_2|>0$. Observe that
  \begin{equation}
    \int_0^1\left|X_{\sqrt{t}}(z_1)-X_{\sqrt{t}}(z_2)\right|\, dt\leq\left|\frac{z_1}{|z_1|}-\frac{z_2}{|z_2|}\right||z_2|^2+(|z_1|^2-|z_2|^2).
  \end{equation}
  Recalling (\ref{eq:keyest1}), we have
  \begin{equation}
    \left|\frac{z_1}{|z_1|}-\frac{z_2}{|z_2|}\right||z_2|^2\leq |z_1-z_2||z_2|\leq\frac{1}{2}|z_1-z_2|(|z_1|+|z_2|).
  \end{equation}
  Combining this with (\ref{eq:keyest2}) proves the lemma.
\end{proof}

With this lemma at hand, the proofs of Theorem
\ref{thm:CheegerInequalityU(1)} and Theorem
\ref{thm:CheegerInequality} are very similar. We omit the details but
mention the following analogue of Lemma \ref{lemma:clusteringI}.

\begin{Lem}\label{lemma:clusteringIU(1)}
  Let $s: E^{or}\to U(1)$ be a signature of $G$ and $f:V\to \mathbb{C}$ be a
  nonzero function. Then there exists $t'\in [0,
  \max_{u\in V}|f(u)|^2]$ such that
  \begin{equation}
    \phi^s_{\mu}(V^f(\sqrt{t'}))\leq \frac{3}{2}\sqrt{2d_{\mu}\mathcal{R}_{\mu}^s(f)}.
  \end{equation}
\end{Lem}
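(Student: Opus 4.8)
The plan is to mimic exactly the proof of Lemma~\ref{lemma:clusteringI}, replacing the random sectorial rounding function $Y_{\sqrt t,\theta}$ by the $U(1)$-valued rounding function $X_{\sqrt t}$ and using Lemma~\ref{lemma:keyU(1)} in place of Lemma~\ref{lemma:key}. First I would normalize: since $f\not\equiv 0$, after rescaling assume $\max_{u\in V}|f(u)|=1$. Note that $|X_{\sqrt t}(f(u))|$ equals $1$ if $|f(u)|\ge\sqrt t$ and $0$ otherwise, so exactly as in \eqref{eq:area formula} we get the area formula $\int_0^1\mathrm{vol}_\mu(V^f(\sqrt t))\,dt=\sum_{u\in V}|f(u)|^2\mu(u)$.

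Next I would establish the $U(1)$-analogue of the coarea inequality (Lemma~\ref{lemma:coarea}): for $f$ with $\max_u|f(u)|=1$,
\begin{equation*}
  \int_0^1 \iota^s\bigl(V^f(\sqrt t)\bigr)+\bigl|E(V^f(\sqrt t),(V^f(\sqrt t))^c)\bigr|\,dt
  \le \tfrac32\sum_{\{u,v\}\in E}w_{uv}\,|f(u)-s_{uv}f(v)|\cdot(|f(u)|+|f(v)|).
\end{equation*}
This follows by the same two-part splitting as in Lemma~\ref{lemma:coarea}: for each fixed $t$, the sum $\sum_{\{u,v\}\in E}w_{uv}|X_{\sqrt t}(f(u))-s_{uv}X_{\sqrt t}(f(v))|$ splits into edges across the cut $V^f(\sqrt t)$ versus its complement — contributing $|E(V^f(\sqrt t),(V^f(\sqrt t))^c)|$ since the rounded values have modulus $1$ and $0$ respectively — and edges inside $V^f(\sqrt t)$, on which $X_{\sqrt t}\circ f$ restricts to a $U(1)$-valued function witnessing (an upper bound for) the frustration index $\iota^s(V^f(\sqrt t))$ by Definition~\ref{def:frustration index}. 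Using $s_{uv}X_{\sqrt t}(f(v))=X_{\sqrt t}(s_{uv}f(v))$ (which holds because $|s_{uv}|=1$), one integrates over $t\in[0,1]$ and applies Lemma~\ref{lemma:keyU(1)} edgewise to bound the right side.

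Finally I would form the quotient
\begin{equation*}
  I:=\frac{\int_0^1 \iota^s(V^f(\sqrt t))+|E(V^f(\sqrt t),(V^f(\sqrt t))^c)|\,dt}{\int_0^1\mathrm{vol}_\mu(V^f(\sqrt t))\,dt}.
\end{equation*}
By the mean-value argument there is $t'\in[0,1]$ with $I\ge\phi^s_\mu(V^f(\sqrt{t'}))$. On the other hand, the coarea inequality above, the area formula, the Cauchy--Schwarz inequality applied to $\sum w_{uv}|f(u)-s_{uv}f(v)|(|f(u)|+|f(v)|)$, and the bound $\sum_{\{u,v\}\in E}w_{uv}(|f(u)|+|f(v)|)^2\le 2\sum_{u\in V}\sum_{v\sim u}w_{uv}|f(u)|^2\le 2d_\mu\sum_{u\in V}|f(u)|^2\mu(u)$ together give $I\le\frac32\sqrt{2d_\mu\mathcal R^s_\mu(f)}$. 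Combining the two bounds on $I$ yields the claim. Rescaling back to a general nonzero $f$ does not affect $\phi^s_\mu$, $\mathcal R^s_\mu$, or the range $t'\in[0,\max_u|f(u)|^2]$. There is no real obstacle here: the only point requiring a little care is checking that the sub-level-set splitting in the coarea step goes through verbatim for $U(1)$-valued rounding (it does, since $|X_{\sqrt t}|\in\{0,1\}$ just as for $Y$), and that the improved constant $\tfrac32$ propagates correctly from Lemma~\ref{lemma:keyU(1)} — which it does, linearly, through the Cauchy--Schwarz step.
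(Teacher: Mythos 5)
Your proposal is correct and takes essentially the same route the paper intends: the paper omits this proof, stating it is ``very similar'' to that of Lemma~\ref{lemma:clusteringI} with Lemma~\ref{lemma:keyU(1)} replacing Lemma~\ref{lemma:key}, and you carry out precisely that substitution, with the $U(1)$-valued rounding $X_{\sqrt t}$ (no $\theta$-averaging), the same two-part edge splitting in the coarea step, the same area formula, and the same Cauchy--Schwarz argument, correctly propagating the improved constant $3/2$.
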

\begin{Rmk}\label{rmk:BSS}
	We notice that the inequality (\ref{eq:Cheeger inequalityU(1)}) for $\Gamma=U(1)$ overlaps with a Cheeger
	inequality for a connection Laplacian of~$G$ discussed by Bandeira, Singer and Spielman \cite{BSS13}
	to solve a partial synchronization problem. The connection Laplacian~$\mathcal{L}$ is defined for a
	simple graph~$G$ where a matrix $O_{uv}\in O(l)$ is assigned to each $(u,v)\in E^{or}$ such that
	$O_{vu}=(O_{uv})^{-1}$. 
	For any vector-valued function $f: V\to \mathbb{R}^l$ and any vertex $u\in V$, we then have
	\begin{equation}
		\mathcal{L}f(u):=\frac{1}{d_u}\sum_{v,v\sim u}w_{uv}(f(u)-O_{uv}f(v))\in \mathbb{R}^l.
	\end{equation}
	For a graph~$G$ with signature $s: E^{or} \to U(1)$ we consider the particular positive measure~$\mu$ on~$V$
	defined as $\mu(u):=d_u$ and rewrite the value $s_{uv}:=a_{uv}+ib_{uv}\in U(1)$ for each $(u,v)\in E^{or}$ as
	\begin{equation}
		\begin{pmatrix}
			a_{uv} & -b_{uv} \\
	    	b_{uv} & a_{uv} \\
		\end{pmatrix}
		\in SO(2).
	\end{equation}
	If we also rewrite a complex valued function $f:=f_1+if_2$ as an $\R^2$-valued function $f:=(f_1, f_2)^T$,
	the discrete magnetic Laplacian~$\Delta_{\mu}^s$ translates into a connection Laplacian $\mathcal{L}^s$
	with eigenvalues
	\begin{equation}
		0\leq \lambda_1(\Delta_{\mu}^s)=\lambda_1(\Delta_{\mu}^s)\leq\cdots\leq \lambda_N(\Delta_{\mu}^s)=\lambda_N(\Delta_{\mu}^s).
	\end{equation}
	Thus, each eigenvalue $\lambda_i(\Delta_{\mu}^s)$ of~$\Delta_{\mu}^s$ is an eigenvalue of $\mathcal{L}^s$ with
	doubled multiplicity. If we denote the Euclidean norm in $\mathbb{R}^l$ by $\Vert\cdot\Vert$, Bandeira,
	Singer and Spielman define a \emph{(partial) $\ell_1$ frustration constant} as
	\begin{equation}
		\eta^*_{G,1}
			:=	\min_{\tau:V\to \mathbb{S}^{l-1}\cup \{0\}}
				\frac{\sum_{u,v\in V}w_{uv}\Vert \tau(u)-O_{uv}\tau(v)\Vert}{\sum_{u\in V}d_u\Vert\tau(u)\Vert},
	\end{equation}
	and prove that
	\begin{equation}\label{eq:Cheeger_BSS}
		\lambda_1(\mathcal{L})\leq \eta^*_{G,1}\leq \sqrt{10\lambda_1(\mathcal{L})}.
	\end{equation}
 	If we assign elements of $SO(2)$ to edges of~$G$ (instead of $O(2)$), we observe that
	\begin{equation}
		\eta^*_{G,1}=2h_1^s(\mu), \qquad\text{and}\qquad \lambda_1(\mathcal{L}^s)=\lambda_1(\Delta_{\mu}^s).
	\end{equation}
	Hence, inequality (\ref{eq:Cheeger_BSS}) leads to inequality (\ref{eq:Cheeger inequalityU(1)}).
	Finally, Bandeira, Singer and Spielman have a refined analysis for (\ref{eq:keyU(1)}) that improves the
	constant $3/2$ in (\ref{eq:keyU(1)}) and (\ref{eq:Cheeger inequalityU(1)}) to $\sqrt{5}/2$, \cite[Appendix A]{BSS13}.
\end{Rmk}

A direct corollary of Theorems \ref{thm:CheegerInequality} and
\ref{thm:CheegerInequalityU(1)} as well as Proposition \ref{pro:Cheeger
  constant} is the following characterization of the case that the
first eigenvalue vanishes.

\begin{Cor}\label{cor:Cheeger constamt}
  $\lambda_1(\Delta_{\mu}^s)=0$ if and only if the underlying graph
  has a balanced connected component.
\end{Cor}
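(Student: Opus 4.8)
The plan is to read this off from the two Cheeger inequalities together with the $n=1$ instance of Proposition~\ref{pro:Cheeger constant}. First I would record that, irrespective of whether $\Gamma=S_k^1$ or $\Gamma=U(1)$, Theorems~\ref{thm:CheegerInequality} and~\ref{thm:CheegerInequalityU(1)} provide a two-sided bound of the form
\[
  \tfrac12\,\lambda_1(\Delta_\mu^s)\ \le\ h_1^s(\mu)\ \le\ C\sqrt{2d_\mu\,\lambda_1(\Delta_\mu^s)},
\]
with $C=2$ or $C=\tfrac32$. Since $d_\mu$ is a fixed positive finite number, these two estimates immediately yield the equivalence $\lambda_1(\Delta_\mu^s)=0\iff h_1^s(\mu)=0$: the left-hand inequality gives ``$h_1^s(\mu)=0\Rightarrow\lambda_1(\Delta_\mu^s)=0$'', and the right-hand inequality gives the converse implication.

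Next I would apply Proposition~\ref{pro:Cheeger constant} with $n=1$. For $n=1$ the condition ``$G$ consists of at least $n$ connected components and at least $n$ of them are balanced'' simply says that $G$ has a balanced connected component. Chaining this with the equivalence from the previous paragraph gives exactly the asserted statement: $\lambda_1(\Delta_\mu^s)=0$ if and only if the underlying graph has a balanced connected component. This completes the proof.

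I do not expect any genuine obstacle, as the corollary is a formal consequence of results already in hand; the only thing to be careful about is to invoke the correct Cheeger inequality according to the signature group. For completeness I might also remark that the equivalence can be seen directly without Cheeger's inequality: if $\lambda_1(\Delta_\mu^s)=0$ then some nonzero $f$ has $\mathcal{R}_\mu^s(f)=0$, i.e.\ $f(u)=s_{uv}f(v)$ along every edge, which forces $|f|$ to be constant and nonzero on some connected component and $\tau:=f/|f|$ to be a switching function trivializing $s$ there, so that component is balanced by Proposition~\ref{Pro:switching lemma}; conversely a balanced component carries a switching function $\tau$ trivializing $s$, and extending $\tau$ by $0$ outside that component produces a test function with Rayleigh quotient $0$. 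In the write-up, however, I would keep the short argument via Theorems~\ref{thm:CheegerInequality}, \ref{thm:CheegerInequalityU(1)} and Proposition~\ref{pro:Cheeger constant}.
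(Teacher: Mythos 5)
Your proof is correct and takes essentially the same route as the paper: the paper derives the corollary directly from Theorems~\ref{thm:CheegerInequality} and~\ref{thm:CheegerInequalityU(1)} combined with Proposition~\ref{pro:Cheeger constant}, exactly as you do, and also remarks (as you do in your closing paragraph) that the equivalence can alternatively be read off the min-max principle~\eqref{eq:minmax}.
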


We remark that Corollary \ref{cor:Cheeger constamt} can also be easily derived by the min-max principle (\ref{eq:minmax}).

\section{Spectral clustering via Lens spaces and complex projective
  spaces}\label{section:higher Cheeger}

In this section, we prove the following higher order Cheeger
inequalities.

\begin{Thm}\label{thm:HigherOrder CheegerInequality}
  There exists an absolute constant $C>0$ such that for any finite
  graph $G$ with signature $s$ and all $n \in [N]$,
  we have
  \begin{equation}\label{eq:HigherOrder Cheeger inequality}
    \frac{1}{2}\lambda_n(\Delta_{\mu}^s)\leq h_n^s(\mu)\leq Cn^3\sqrt{d_{\mu}\lambda_n(\Delta_{\mu}^s)}.
  \end{equation}
\end{Thm}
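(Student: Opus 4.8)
The plan is to follow the by-now-standard route to higher order Cheeger inequalities, adapted to the cyclic-signature setting via the randomized $k$-partition device (the functions $Y_{t,\theta}$) from Lemma~\ref{lemma:key}. The lower bound $\frac12\lambda_n(\Delta_\mu^s)\le h_n^s(\mu)$ is the easy half: given a nontrivial $n$-subpartition $\{V_p\}_{[n]}$ achieving $h_n^s(\mu)$, for each $p$ pick the switching function $\widetilde\tau_p:V_p\to S_k^1$ realizing $\iota^s(V_p)$ and set $\widetilde f_p:=\widetilde\tau_p$ on $V_p$ and $0$ elsewhere. These $\widetilde f_p$ have pairwise disjoint supports, hence are $\langle\cdot,\cdot\rangle_\mu$-orthogonal, and the same computation as in the proof of Theorem~\ref{thm:CheegerInequality} (using $|\widetilde\tau_p(u)-s_{uv}\widetilde\tau_p(v)|\le 2$) gives $\mathcal R_\mu^s(\widetilde f_p)\le 2\phi_\mu^s(V_p)\le 2h_n^s(\mu)$ for every $p$. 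Then every $f$ in the span of $\widetilde f_1,\dots,\widetilde f_n$ also satisfies $\mathcal R_\mu^s(f)\le 2h_n^s(\mu)$ because the Rayleigh quotient of an orthogonal sum of disjointly supported functions is a weighted average of the individual Rayleigh quotients; by the min-max principle~\eqref{eq:minmax} this forces $\lambda_n(\Delta_\mu^s)\le 2h_n^s(\mu)$.

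For the upper bound I would first reduce to an eigenfunction localization statement. Let $f_1,\dots,f_n$ be $\langle\cdot,\cdot\rangle_\mu$-orthonormal eigenfunctions for $\lambda_1(\Delta_\mu^s)\le\dots\le\lambda_n(\Delta_\mu^s)$. Using a spectral clustering / localization argument, one produces $n$ functions $g_1,\dots,g_n:V\to\mathbb C$ with pairwise disjoint supports such that each $g_p$ lies in (a small enlargement of) the span of $f_1,\dots,f_n$ and $\mathcal R_\mu^s(g_p)\le C_1 n^2\,\lambda_n(\Delta_\mu^s)$ for an absolute constant $C_1$. This is exactly the analogue, in our signed/complex setting, of the Lee--Oveis~Gharan--Trevisan localization theorem; in the body of the paper it is presumably Lemma~\ref{lemma:localization} (the discrete counterpart of Lemma~\ref{lemma:localization manifold}), which one may invoke. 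The point is that the localization step is insensitive to the signature: it only uses the combinatorial/metric structure of how the mass of the eigenfunctions distributes over $V$, together with the triangle inequality $|f(u)-s_{uv}f(v)|\ge \big||f(u)|-|f(v)|\big|$ and the fact that the energy form is bounded below by the ``absolute-value'' Dirichlet form, so the $g_p$ produced by cutting off $f$ on metric balls have controlled Rayleigh quotient.

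Then I would feed each localized function $g_p$ into the coarea machinery. Because $g_p$ is supported on a set $V_p^\circ$ disjoint from the others, I apply Lemma~\ref{lemma:clusteringI} to $g_p$: there is a level $t_p'$ with $\phi_\mu^s\big(V^{g_p}(\sqrt{t_p'})\big)\le 2\sqrt{2d_\mu\,\mathcal R_\mu^s(g_p)}$, and the superlevel set $V^{g_p}(\sqrt{t_p'})\subseteq V_p^\circ$ is contained in the support of $g_p$. Hence the $n$ sets $U_p:=V^{g_p}(\sqrt{t_p'})$, $p\in[n]$, form a nontrivial $n$-subpartition of $V$ with
\begin{equation*}
  \max_{p\in[n]}\phi_\mu^s(U_p)\ \le\ 2\sqrt{2d_\mu\,\max_{p}\mathcal R_\mu^s(g_p)}\ \le\ 2\sqrt{2 C_1}\,n\sqrt{d_\mu\,\lambda_n(\Delta_\mu^s)},
\end{equation*}
which gives the claimed bound $h_n^s(\mu)\le C n^{3}\sqrt{d_\mu\lambda_n(\Delta_\mu^s)}$ (indeed with the better power $n$ if one tracks constants carefully; the statement allows the crude $n^3$, which leaves plenty of room for a less sharp localization lemma). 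The main obstacle is the localization step itself: in the unsigned case the eigenfunctions are real and one partitions the line $\mathbb R$, whereas here $f_p:V\to\mathbb C$ and, more importantly, one must localize while respecting the signature so that the cut-off functions stay (approximately) in the eigenspace span and retain small Rayleigh quotient. This is precisely where the metric on the lens space $S^{2n-1}/S_k^1$ (for $\Gamma=S_k^1$) or the complex projective space $\mathbb{CP}^{n-1}$ (for $\Gamma=U(1)$) enters — one maps $u\mapsto (f_1(u),\dots,f_n(u))$ into $\mathbb C^n$, projects to the appropriate quotient to kill the gauge/switching ambiguity, and runs the random-partition argument of Lemma~\ref{lemma:key} there; making that quantitative, with the energy estimate surviving the projection, is the technical heart of the section and is carried out in Proposition~\ref{prop:clusteringImodify} and the surrounding lemmata, which I would cite rather than reprove.
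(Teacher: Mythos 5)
Your upper-bound strategy is the same as the paper's: map $u\mapsto F(u)=(f_1(u),\dots,f_n(u))$ into $\mathbb C^n$, descend to the lens space $S^{2n-1}/S_k^1$ (resp.\ $\mathbb{CP}^{n-1}$) to quotient out the signature ambiguity, run a padded random partition there, cut off $F$ via Lemma~\ref{lemma:localization}, take a coordinate of each localized map, and feed it into Lemma~\ref{lemma:clusteringI} (resp.\ \ref{lemma:clusteringIU(1)}). One quantitative caveat: the paper's decomposition (Theorem~\ref{thm:decomposition} with $\epsilon\sim n^{-5/2}$ together with Lemma~\ref{lemma:localization} and the $\mu_F(T_p)\geq\mu_F(\mathcal V_F)/(2n)$ lower bound) yields $\mathcal R_\mu^s(\Psi_p)\leq C n^6\mathcal R_\mu^s(F)$, not $Cn^2$ as you assert; after the square root this gives exactly the $n^3$ of the statement, so your ``indeed with the better power $n$'' is not supported by the cited machinery. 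That does not affect validity of the theorem as stated, but the claimed $n^2$ localization bound should not be presented as established.

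The lower bound argument has a genuine gap. You write that every $f$ in the span of $\widetilde f_1,\dots,\widetilde f_n$ satisfies $\mathcal R_\mu^s(f)\leq 2h_n^s(\mu)$ ``because the Rayleigh quotient of an orthogonal sum of disjointly supported functions is a weighted average of the individual Rayleigh quotients.'' This is false in general. Disjoint supports make the denominators add, but the quadratic form $\sum_{\{u,v\}}w_{uv}|f(u)-s_{uv}f(v)|^2$ has nonzero cross terms whenever an edge joins $\widetilde V_p$ to $\widetilde V_q$ with $p\neq q$, which is unavoidable since the optimal $n$-subpartition need not be a union of connected components separated by empty cuts. Concretely, for $u\in\widetilde V_p$, $v\in\widetilde V_q$, the term $|a_p\widetilde\tau_p(u)-s_{uv}a_q\widetilde\tau_q(v)|^2$ contributes to the numerator of $\mathcal R_\mu^s(\widetilde f_a)$ but is not captured by any weighted average of $\mathcal R_\mu^s(\widetilde f_p)$ and $\mathcal R_\mu^s(\widetilde f_q)$. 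The correct route (which the paper follows) is to split the edge sum into edges inside one $\widetilde V_p$, edges between distinct $\widetilde V_p$, $\widetilde V_q$, and edges meeting the uncovered complement $V^*=(\bigcup_p\widetilde V_p)^c$, bound the cross terms by $2(|a_p|^2+|a_q|^2)$, and absorb them into $|E(\widetilde V_p,\widetilde V_p^c)|$; this gives $\mathcal R_\mu^s(\widetilde f_a)\leq 2\max_p\phi_\mu^s(\widetilde V_p)$ directly by the mediant inequality. The conclusion you reach is true, but the justification offered would not survive scrutiny and must be replaced by this direct computation.
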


Note that in Theorem \ref{thm:HigherOrder CheegerInequality} the
signature group $\Gamma$ can be either $S_k^1$ or $U(1)$.

The upper bound of $h_n^s(\mu)$ in (\ref{eq:HigherOrder Cheeger
  inequality}) is the essential part of Theorem \ref{thm:HigherOrder
  CheegerInequality} and its proof relies on the development of a
proper spectral clustering algorithm for the operator
$\Delta_{\mu}^s$.  In other words, we aim to find an $n$-subpartition
$\{ V_p \}_{[n]}$ with small constants $\phi_{\mu}^s(V_p)$, based on
the information contained in the eigenfunctions of the operator
$\Delta_{\mu}^s$.

Let $f_i$ be an orthonormal family of eigenfunctions
corresponding to $\lambda_i(\Delta_{\mu}^s)$ for $i \in [n]$. We consider
the following map:
\begin{equation}\label{fct:F}
  F: V\to \mathbb{C}^n, \quad F(u) = (f_1(u), f_2(u), \ldots, f_n(u)).
\end{equation}
Since $\lambda_n(\Delta_{\mu}^s)=\mathcal{R}_{\mu}^s(f_n)$, the
Rayleigh quotient of $F$ is also bounded by $\lambda_n(\Delta_\mu^s)$:
\begin{align}
  \mathcal{R}_{\mu}^s(F)
	:=& \frac{\sum_{\{u,v\}\in E}w_{uv}\Vert F(u)-s_{uv}F(v)\Vert^2}
			{\sum_{u\in V}\mu(u)\Vert F(u)\Vert^2} \notag\\
  	 =& \frac{\sum_{p\in[n]}\sum_{\{u,v\}\in E}w_{uv}|f_p(u)-s_{uv}f_p(v)|^2}
			{\sum_{p\in[n]}\sum_{u\in V}\mu(u)|f_p(u)|^2}\notag\\
   \le& \lambda_n(\Delta_\mu^s),\label{eq:forfinalproof}
\end{align}
where $\Vert\cdot\Vert$ stands for the standard Hermitian norm in
$\mathbb{C}^n$. Our goal is to construct $n$ maps $\Psi_p: V \to {\mathbb
  C}^n$, $p \in [n]$, with pairwise disjoint supports such that
\begin{compactenum}
\item each $\Psi_p$ can be viewed as a localization of $F$, i.e., $\Psi_p$ is the product of $F$ and a cut-off function $\eta: V\to \mathbb{R}$ (see (\ref{eq:localizationF}) below), 
\item each Rayleigh quotient satisfies
  $\mathcal{R}_{\mu}^s(\Psi_p)\leq C(n) \mathcal{R}_{\mu}^s(F)$, where
  $C(n)$ is a constant only depending on $n$.
\end{compactenum}
Then, applying Lemmas \ref{lemma:clusteringI} and \ref{lemma:clusteringIU(1)}
will finish the proof.

This strategy is adapted from the proof of the higher order Cheeger
inequalities for unsigned graphs due to Lee, Oveis Gharan, and
Trevisan \cite{LOT2013}. A critical new point here is to find a proper
metric on the space of points $\{F(u)|u\in V\}\subset \mathbb{C}^n$
for the spectral clustering algorithm. In other words, we need a
proper metric to localize the map $F$. The original algorithm in
\cite{LOT2013} used a spherical metric. The second author \cite{Liu13}
studied a spectral clustering via metrics on real projective spaces to
prove higher order dual Cheeger inequalities for unsigned
graphs. Later in \cite{AtayLiu14}, the above two algorithms and,
hence, the corresponding two kinds of inequalities, were unified in
the framework of Harary's signed graphs, i.e., graphs with signatures
$s: E^{or}\to \{+1,-1\}$. In particular, the metrics on real
projective spaces were shown to be the proper metrics for clustering
in the framework of signed graphs. In our current more general setting
of graphs with signatures $s:E^{or}\to \Gamma$, where $\Gamma=S_k^1$
or $\Gamma=U(1)$, the new metrics will be defined on lens spaces and
complex projective spaces.

\subsection{Lens spaces and complex projective spaces}
In this subsection, we provide metrics of lens spaces and complex
projective spaces for the spectral clustering algorithms in the case
of $\Gamma=S_k^1$ and $\Gamma=U(1)$, respectively. Both lens spaces
and complex projective spaces are important objects in geometry and
topology. See, e.g., \cite[Chapter 5]{Jost05} for details about these
spaces.

Let $\mathbb{S}^{2n-1}:=\{\mathbf{z}\in \mathbb{C}^n\mid
\Vert\mathbf{z}\Vert=1\}$ be the unit sphere in the space
$\mathbb{C}^n$. Then $\Gamma \subset {\mathbb C}$ acts on
$\mathbb{S}^{2n-1}$ by scalar multiplication.
For any two points
$\mathbf{z}_1, \mathbf{z}_2\in \mathbb{S}^{2n-1}\subset \mathbb{C}^n$,
we define the following equivalence relation:
\begin{equation}
  \mathbf{z}_1\sim\mathbf{z}_2\,\,\,\Leftrightarrow\,\,\,\exists\,\gamma\in \Gamma \,\,\text{such that}\,\mathbf{z}_1=\gamma\mathbf{z}_2.
\end{equation}
For $\Gamma = S_k^1$, the corresponding quotient space
$\mathbb{S}^{2n-1}/\Gamma$ is the lens space
$L(k;1,\ldots,1)$, while for $\Gamma=U(1)$, the quotient
space $\mathbb{S}^{2n-1}/\Gamma$ is the complex projective space
$\mathbb{C}P^{n-1}$. Let $[\mathbf{z}]$ denote the equivalence class
of $\mathbf{z}\in \mathbb{S}^{2n-1}$. We consider the following metric
on $\mathbb{S}^{2n-1}/\Gamma$:
\begin{equation}\label{eq:roughmetric}
  d([\mathbf{z}_1], [\mathbf{z}_2]):=\min_{\gamma\in \Gamma}\Vert \mathbf{z}_1-\gamma\mathbf{z}_2\Vert.
\end{equation}

The space $\mathbb{S}^{2n-1}/\Gamma$ can also be endowed with a
distance $d_{quot}$ which is induced from the standard Riemannian
metric on $\mathbb{S}^{2n-1} \subset {\mathbb R}^{2n}$. This induced
metric has positive Ricci curvature. If $\Gamma = S_k^1$, the
sectional curvature of this metric is constant equal to $1$, and if
$\Gamma = U(1)$, this metric is the well-known Fubini-Study
metric. The two metrics $d$ and $d_{quot}$ on
$\mathbb{S}^{2n-1}/\Gamma$ are equivalent, i.e., there exist two
constants $c_1, c_2 > 0$ such that for all $[z_1], [z_2] \in
S^{2n-1}/\Gamma$,
\begin{equation}\label{eq:equivalentMetric}
  c_1d_{quot}([\mathbf{z}_1], [\mathbf{z}_2])\leq d([\mathbf{z}_1], [\mathbf{z}_2])\leq c_2 d_{quot}([\mathbf{z}_1], [\mathbf{z}_2]).
\end{equation}

Recall the
concept of the metric doubling constant $\rho_{\mathbb{X}}$ of a
metric space $(\mathbb{X}, d_{\mathbb{X}})$. This constant is the
infimum of all numbers $\rho$ such that every ball $B$ in $\mathbb{X}$
can be covered by $\rho$ balls of half the radius of $B$.

\begin{Pro}\label{pro:doubling}
  The metric doubling constant $\rho_\Gamma$ of
  $(\mathbb{S}^{2n-1}/\Gamma,d)$ satisfies
  \begin{equation}\label{eq:metricdoubling}
    \log_2\rho_\Gamma\leq Cn,
  \end{equation} where $C$ is an absolute constant.
\end{Pro}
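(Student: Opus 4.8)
The plan is to estimate the metric doubling constant $\rho_\Gamma$ of the quotient metric space $(\mathbb{S}^{2n-1}/\Gamma, d)$ by transferring the question, via the metric equivalence \eqref{eq:equivalentMetric}, to the Riemannian quotient $(\mathbb{S}^{2n-1}/\Gamma, d_{quot})$, whose geometry is well understood: it carries a Riemannian metric of positive Ricci curvature (constant sectional curvature $1$ for $\Gamma = S_k^1$, Fubini--Study for $\Gamma = U(1)$), and in both cases the Ricci curvature is bounded below by a constant \emph{independent of $n$} only up to the right normalization — in fact $\Ric \ge (2n-2) g$ in the $\mathbb{S}^{2n-1}$ case and $\Ric = (2n+2) g$ for $\mathbb{C}P^{n-1}$ with the standard normalizations, so the relevant dimensionless comparison is governed by the real dimension $m = 2n-1$ (resp. $2n-2$). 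The standard fact I would invoke is that a complete Riemannian manifold with $\Ric \ge 0$ (hence, locally, with $\Ric \ge -(m-1)K$ for any $K>0$) is a metric doubling space with doubling constant bounded by $2^{m}$, as an immediate consequence of the Bishop--Gromov volume comparison inequality: a ball of radius $r$ has volume at most that of the Euclidean $r$-ball, a ball of radius $r/2$ about any point inside has volume at least a fixed fraction $2^{-m}$ of a comparison ball, and a maximal $(r/2)$-separated net in $B(x,r)$ therefore has at most $5^{m}$ (or, with a more careful packing argument, $2^{O(m)}$) points, each of whose $(r/2)$-balls covers $B(x,r)$.

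Concretely, I would first fix the normalization: put the round metric $g_0$ on $\mathbb{S}^{2n-1}$, which descends to $g_{quot}$ on the quotient, and record that in both cases $\Ric_{g_{quot}} \ge c\,(m-1)\, g_{quot}$ for a universal constant $c>0$ — this is where the constant-curvature computation (sectional curvature $\equiv 1$ for the lens space) and the Fubini--Study curvature computation (sectional curvatures between $1$ and $4$ after rescaling) enter, giving $\Ric \ge (m-1)g$ in the first case and $\Ric \ge (m-1)g$ after rescaling in the second. Since the quotient is compact, Bishop--Gromov comparison against the simply connected space form of curvature $0$ (valid because $\Ric \ge 0$) yields: for all $x$ and all $r \le \diam$, $\frac{\vol B(x,r)}{\vol B(x,r/2)} \le 2^{m}$. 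Choosing a maximal collection of points $x_1, \dots, x_\ell \in B(x,r)$ that are pairwise at distance $\ge r/2$, the balls $B(x_i, r/4)$ are disjoint and contained in $B(x, 2r)$, while $B(x, 2r) \subseteq \bigcup_i B(x_i, r/2) \cup \cdots$ — more precisely the balls $B(x_i, r/2)$ cover $B(x,r)$ by maximality — and a volume count $\ell \cdot \min_i \vol B(x_i, r/4) \le \vol B(x, 2r)$ together with two applications of Bishop--Gromov bounds $\ell \le 2^{O(m)}$. This gives $\log_2 \rho_{quot} \le C' m \le C'' n$.

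Finally I would pass from $d_{quot}$ to $d$ using \eqref{eq:equivalentMetric}: a $d$-ball of radius $R$ is sandwiched, $B_{d_{quot}}(z, R/c_2) \subseteq B_d(z, R) \subseteq B_{d_{quot}}(z, R/c_1)$, so covering the larger $d_{quot}$-ball of radius $R/c_1$ by $d_{quot}$-balls of radius $R/(2c_2)$ — which requires at most $\rho_{quot}^{\,\lceil \log_2(2c_2/c_1) \rceil}$ of them by iterating the doubling property $\lceil \log_2(2c_2/c_1)\rceil$ times — and then enlarging each to the corresponding $d$-ball of radius $R/2$, produces a cover of $B_d(z,R)$ by $\rho_{quot}^{\,O(1)}$ balls of half the $d$-radius. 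Hence $\log_2 \rho_\Gamma \le O(1) \cdot \log_2 \rho_{quot} \le Cn$. The main obstacle, and the step deserving the most care, is making the comparison constants $c_1, c_2$ in \eqref{eq:equivalentMetric} genuinely dimension-free (not merely existent for each fixed $n$); I would handle this by noting that near the diagonal both $d$ and $d_{quot}$ are comparable to the ambient chordal distance $\Vert \mathbf{z}_1 - \gamma \mathbf{z}_2\Vert$ minimized over $\gamma$, with the comparison $\frac{2}{\pi}|\theta| \le |e^{i\theta}-1| \le |\theta|$-type estimates holding uniformly, and that globally the diameters of both spaces are bounded by universal constants; alternatively one can absorb any dimension dependence of $c_1/c_2$ into the exponent, since $\log_2(2c_2/c_1)$ would then itself be $O(n)$ and the final bound $O(n)\cdot\log_2\rho_{quot} = O(n^2)$ would need to be avoided — so the clean route really does require the uniform metric comparison, which is the technical heart of the argument.
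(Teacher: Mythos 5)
Your proposal is correct and follows essentially the same route as the paper: pass from $d$ to the Riemannian quotient metric $d_{quot}$ via the equivalence \eqref{eq:equivalentMetric}, invoke nonnegative Ricci curvature and Bishop--Gromov volume comparison to get a volume-doubling bound $\bar C^n$, and convert to a metric-doubling bound by the standard packing argument. The one point you flag that the paper leaves implicit --- that the constants $c_1,c_2$ in \eqref{eq:equivalentMetric} must be dimension-free --- is a legitimate concern, and your resolution (the chordal vs.\ geodesic comparison $\frac{2}{\pi}\theta \le 2\sin(\theta/2)\le\theta$ is uniform in $n$, and the infimum over $\gamma\in\Gamma$ preserves it on the quotient) is the right way to close it.
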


\begin{proof}
  Due to the equivalence (\ref{eq:equivalentMetric}), we only need to
  consider the metric space
  $(\mathbb{S}^{2n-1}/\Gamma,d_{quot})$. Since
  $\mathbb{S}^{2n-1}/\Gamma$ with its standard metric has nonnegative
  Ricci curvature, the Bishop-Gromov comparison theorem guarantees
  \begin{equation}
    \frac{\mathrm{vol}(B_r([\mathbf{z_1}]))}{\mathrm{vol}(B_{r/2}([\mathbf{z_1}]))}\leq \bar{C}^n,
  \end{equation}
  for some absolute constant $\bar{C}$. (Note that the real dimension
  of the lens space is $2n-1$ and of the complex projective space is
  $2n-2$.) A standard argument implies now the claim of the
  proposition. For details see, e.g., \cite[p.67]{CoiWei1971} or
  \cite[Section 2.2]{Liu13}.
\end{proof}

The metric $d$ on $\mathbb{S}^{2n-1}/\Gamma$ induces a pseudo metric
on the space $\mathbb{C}^n\setminus \{0\}$, which -- by abuse of
notation -- will again be denoted by $d$:
\begin{equation}
  d(\mathbf{z}_1, \mathbf{z}_2):=d\left(\left[\frac{\mathbf{z}_1}{\Vert\mathbf{z}_1\Vert}\right],\left[\frac{\mathbf{z}_2}{\Vert\mathbf{z}_2\Vert}\right]\right).
\end{equation}

The following obvious property is the reason why we use the metric $d$
on $S^{2n-1}/\Gamma$ from (\ref{eq:roughmetric}). This reason will
become clear in the next subsection \ref{subsec:localization}.

\begin{Pro}\label{pro:reasonfor metric}
  For every pair $\mathbf{z}_1,\mathbf{z}_2\in \mathbb{C}^n\setminus \{0\}$
  and every $\gamma\in \Gamma$, we have
  \begin{equation}
    d(\mathbf{z}_1,\mathbf{z}_2)=d(\mathbf{z}_1,\gamma\mathbf{z}_2).
  \end{equation}
\end{Pro}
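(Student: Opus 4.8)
The plan is to unwind the two definitions involved and rely on the one structural fact that $\Gamma\subseteq U(1)$, so that every $\gamma\in\Gamma$ has modulus one. The statement is essentially immediate once this is exploited, so the ``proof'' is really just a matter of bookkeeping with representatives.

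First I would record the elementary observation that for any $\gamma\in\Gamma$ and any $\mathbf{z}_2\in\mathbb{C}^n\setminus\{0\}$ one has $\Vert\gamma\mathbf{z}_2\Vert=|\gamma|\cdot\Vert\mathbf{z}_2\Vert=\Vert\mathbf{z}_2\Vert$, so that normalization commutes with the $\Gamma$-action:
\begin{equation*}
  \frac{\gamma\mathbf{z}_2}{\Vert\gamma\mathbf{z}_2\Vert}=\gamma\cdot\frac{\mathbf{z}_2}{\Vert\mathbf{z}_2\Vert}.
\end{equation*}
Since $\mathbf{z}_2/\Vert\mathbf{z}_2\Vert\in\mathbb{S}^{2n-1}$ and $\gamma\in\Gamma$, the point on the right-hand side lies in the same $\Gamma$-orbit as $\mathbf{z}_2/\Vert\mathbf{z}_2\Vert$, hence defines the same class in $\mathbb{S}^{2n-1}/\Gamma$:
\begin{equation*}
  \left[\frac{\gamma\mathbf{z}_2}{\Vert\gamma\mathbf{z}_2\Vert}\right]=\left[\gamma\cdot\frac{\mathbf{z}_2}{\Vert\mathbf{z}_2\Vert}\right]=\left[\frac{\mathbf{z}_2}{\Vert\mathbf{z}_2\Vert}\right].
\end{equation*}

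Then I would simply substitute this identity into the definition of the induced pseudo metric on $\mathbb{C}^n\setminus\{0\}$, obtaining
\begin{equation*}
  d(\mathbf{z}_1,\gamma\mathbf{z}_2)=d\left(\left[\frac{\mathbf{z}_1}{\Vert\mathbf{z}_1\Vert}\right],\left[\frac{\gamma\mathbf{z}_2}{\Vert\gamma\mathbf{z}_2\Vert}\right]\right)=d\left(\left[\frac{\mathbf{z}_1}{\Vert\mathbf{z}_1\Vert}\right],\left[\frac{\mathbf{z}_2}{\Vert\mathbf{z}_2\Vert}\right]\right)=d(\mathbf{z}_1,\mathbf{z}_2),
\end{equation*}
which is the claim. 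Alternatively, one can argue directly at the level of formula \eqref{eq:roughmetric}: $d([\mathbf{z}_1/\Vert\mathbf{z}_1\Vert],[\gamma\mathbf{z}_2/\Vert\mathbf{z}_2\Vert])=\min_{\gamma'\in\Gamma}\Vert\mathbf{z}_1/\Vert\mathbf{z}_1\Vert-\gamma'\gamma\,\mathbf{z}_2/\Vert\mathbf{z}_2\Vert\Vert$, and as $\gamma'$ ranges over the group $\Gamma$ so does $\gamma'\gamma$, so the minimum is unchanged. There is no genuine obstacle here; the only point worth flagging is that the argument uses precisely that $\Gamma$ consists of unit-modulus scalars (were some $\gamma$ to have $|\gamma|\neq 1$, normalization would no longer commute with the action and the conclusion could fail), which is exactly why the metric $d$ of \eqref{eq:roughmetric}, rather than $d_{quot}$, is the convenient one to work with in the next subsection.
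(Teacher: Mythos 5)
Your proof is correct, and since the paper states this as an "obvious property" without writing out an argument, your unwinding of the definitions is exactly the intended justification: normalization commutes with the $\Gamma$-action because $|\gamma|=1$, so $\gamma\mathbf{z}_2$ and $\mathbf{z}_2$ represent the same class in $\mathbb{S}^{2n-1}/\Gamma$.
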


The considerations of the next two subsections prepare the ground for the study of the Rayleigh quotient $\mathcal{R}_{\mu}^s(F)$ of the map
$F: V\to \mathbb{C}^n$ defined in~(\ref{fct:F}).

\subsection{Localization of the map $F$
  } \label{subsec:localization}

We endow the support
$V_F:=\{u\in V|F(u)\neq 0\}$ with the pseudo metric $d_F$ induced by $d$ via
\begin{equation}\label{eq:pseudometric}
 d_F(u,v):=d(F(u), F(v)).
\end{equation}
Given a subset $S\subseteq V$ and $\epsilon>0$, we first define a cut-off function
$\eta: V\to \mathbb{R}$ by
\begin{equation}\label{eq:cutoff}
  \eta(u):=
    \begin{cases}
      0, 												& \text{if $F(u)=0$,} \\
      \max\{0, 1-\frac{1}{\epsilon}d_F(u, S\cap V_F)\}, & \text{otherwise}
    \end{cases}
\end{equation}
and then localize~$F$ via $\eta$ as
\begin{equation}\label{eq:localizationF}
  \Psi:=\eta F: V\to \mathbb{C}^n.
\end{equation}
Note that the $\epsilon$-neighborhood $N_{\epsilon}(S\cap V_F,
d_F):=\{u\in V|d_F(u, S\cap V_F)<\epsilon\}$ of $S\cap V_F$ contains
the support of the map $\Psi$.

In the next lemma, $G_F=(V_F,E_F)$ denotes the induced subgraph on $V_F$ of~$G$.
\begin{Lem}\label{lemma:keyForLocal}
  If $\{u,v\}\in E_F$ and $\Vert F(v)\Vert\leq \Vert F(u)\Vert$ then
  \begin{equation}\label{eq:keyForLocal}
    d(F(u), F(v))\Vert F(v)\Vert\leq \Vert F(u)-s_{uv}F(v)\Vert.
  \end{equation}
\end{Lem}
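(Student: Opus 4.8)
The plan is to reduce the inequality to a statement about the metric $d$ on $\mathbb{S}^{2n-1}/\Gamma$ and then exploit Proposition \ref{pro:reasonfor metric}, which tells us that $d$ does not change under multiplication of one argument by an element of $\Gamma$. Write $\mathbf{z}_1 = F(u)$ and $\mathbf{z}_2 = F(v)$; both are nonzero since $\{u,v\}\in E_F$, and by hypothesis $\|\mathbf{z}_2\|\le\|\mathbf{z}_1\|$. The left-hand side of \eqref{eq:keyForLocal} is, by the definition of $d_F$ and the induced pseudo metric, equal to
\begin{equation*}
  d\!\left(\left[\frac{\mathbf{z}_1}{\|\mathbf{z}_1\|}\right],\left[\frac{\mathbf{z}_2}{\|\mathbf{z}_2\|}\right]\right)\|\mathbf{z}_2\|
  = \min_{\gamma\in\Gamma}\left\|\frac{\mathbf{z}_1}{\|\mathbf{z}_1\|}-\gamma\frac{\mathbf{z}_2}{\|\mathbf{z}_2\|}\right\|\,\|\mathbf{z}_2\|
  = \min_{\gamma\in\Gamma}\left\|\frac{\|\mathbf{z}_2\|}{\|\mathbf{z}_1\|}\,\mathbf{z}_1-\gamma\mathbf{z}_2\right\|.
\end{equation*}

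Next I would compare this with $\|\mathbf{z}_1-s_{uv}\mathbf{z}_2\|$, noting that $s_{uv}\in\Gamma$ is one admissible choice of $\gamma$, but with the "wrong" scaling factor in front of $\mathbf{z}_1$. The key estimate is therefore a one-variable convexity/projection fact: for a real scalar $0\le t\le 1$ and vectors $a,b$ in a Hilbert space, $\|ta-b\|\le\|a-b\|$ provided $\|b\|\le\|a\|$. Indeed, $\mapsto \|sa-b\|^2 = s^2\|a\|^2 - 2s\,\mathrm{Re}\langle a,b\rangle + \|b\|^2$ is a quadratic in $s$ with minimum at $s^* = \mathrm{Re}\langle a,b\rangle/\|a\|^2 \le \|b\|/\|a\| \le 1$, so on the interval $[s^*,1]$ the function is increasing; applying this with $a=\mathbf{z}_1$, $b=s_{uv}\mathbf{z}_2$ (so $\|b\|=\|\mathbf{z}_2\|\le\|\mathbf{z}_1\|=\|a\|$) and $t=\|\mathbf{z}_2\|/\|\mathbf{z}_1\|\in(0,1]$ gives
\begin{equation*}
  \left\|\frac{\|\mathbf{z}_2\|}{\|\mathbf{z}_1\|}\,\mathbf{z}_1-s_{uv}\mathbf{z}_2\right\|\le\|\mathbf{z}_1-s_{uv}\mathbf{z}_2\|.
\end{equation*}

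Combining the two displays, the minimum over $\gamma\in\Gamma$ is bounded by the value at $\gamma=s_{uv}$, which in turn is bounded by $\|\mathbf{z}_1-s_{uv}\mathbf{z}_2\| = \|F(u)-s_{uv}F(v)\|$, which is exactly \eqref{eq:keyForLocal}. I expect the only genuinely substantive point to be the scalar projection inequality $\|ta-b\|\le\|a-b\|$ for $t\in[0,1]$ and $\|b\|\le\|a\|$ — everything else is unwinding definitions and using that $s_{uv}\in\Gamma$ together with Proposition \ref{pro:reasonfor metric}. A minor subtlety worth a sentence is that one must use $\|b\|\le\|a\|$ (not merely $t\le1$) to guarantee $s^*\le 1$; without the norm hypothesis the inequality can fail, which is why the lemma assumes $\|F(v)\|\le\|F(u)\|$.
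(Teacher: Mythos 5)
Your proof is correct and follows essentially the same route as the paper: bound the minimum over $\gamma\in\Gamma$ defining $d$ by the particular choice $\gamma=s_{uv}$ (the paper achieves the same effect by first invoking Proposition \ref{pro:reasonfor metric} to reduce to the trivial signature), and then apply the projection inequality $\bigl\Vert\tfrac{\Vert b\Vert}{\Vert a\Vert}a-b\bigr\Vert\le\Vert a-b\Vert$ when $\Vert b\Vert\le\Vert a\Vert$. The only difference is that you supply a clean one-variable quadratic argument for that last inequality, whereas the paper simply cites its earlier unproved observation \eqref{eq:keyest1}; your explanation of why the norm hypothesis (not just $t\le 1$) is needed is exactly the right point to emphasize.
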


\begin{proof}
  Observe that we only need to prove
  \begin{equation}\label{eq:keyForLocalProof}
    d(F(u), F(v))\Vert F(v)\Vert\leq \Vert F(u)-F(v)\Vert
  \end{equation}
  for any pair of points $F(u), F(v)\in \mathbb{C}^n\setminus\{0\}$
  with $\Vert F(v)\Vert\leq \Vert F(u)\Vert$: we can replace
  $F(v)$ in (\ref{eq:keyForLocalProof}) by $s_{uv}F(v)$ and use
  Proposition \ref{pro:reasonfor metric} to obtain
  (\ref{eq:keyForLocal}). By the definition of the metric $d$, we obtain
  (\ref{eq:keyForLocalProof}) as follows:
  \begin{align*}
    d(F(u), F(v))\Vert F(v)\Vert
		\leq& \left\Vert\frac{F(u)}{\Vert F(u)\Vert}-\frac{F(v)}{\Vert F(v)\Vert}\right\Vert\Vert F(v)\Vert\\
		\leq& \Vert F(u)-F(v)\Vert,
  \end{align*}
  where we used the estimate~(\ref{eq:keyest1}) for the latter inequality.
\end{proof}

Lemma \ref{lemma:keyForLocal} enables us to prove the following
result.

\begin{Lem}\label{lemma:localization}
  For any $\{u,v\}\in E$, we have
  \begin{equation}\label{eq:localization}
    \Vert\Psi(u)-s_{uv}\Psi(v)\Vert\leq\left(1+\frac{1}{\epsilon}\right) \Vert F(u)-s_{uv}F(v)\Vert.
  \end{equation}
\end{Lem}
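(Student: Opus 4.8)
The plan is to prove the pointwise inequality \eqref{eq:localization} by a case analysis on whether the endpoints $u,v$ lie in $V_F$ or not, and in the nontrivial case to combine the triangle inequality with the Lipschitz property of the cut-off $\eta$ and with Lemma~\ref{lemma:keyForLocal}. First I would dispose of the degenerate cases. If both $F(u)=0$ and $F(v)=0$, then $\Psi(u)=\Psi(v)=0$ and both sides vanish. If exactly one of them, say $F(v)=0$ and $F(u)\neq 0$, then $\Psi(v)=0$ and $\Vert\Psi(u)-s_{uv}\Psi(v)\Vert = \eta(u)\Vert F(u)\Vert \le \Vert F(u)\Vert = \Vert F(u)-s_{uv}F(v)\Vert$ since $0\le \eta(u)\le 1$, and this is even stronger than \eqref{eq:localization}. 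So from now on assume $\{u,v\}\in E_F$, i.e.\ $F(u)\neq 0$ and $F(v)\neq 0$, and by symmetry (note $|s_{uv}|=1$, so $\Vert F(u)-s_{uv}F(v)\Vert = \Vert F(v)-s_{vu}F(u)\Vert$) we may assume $\Vert F(v)\Vert \le \Vert F(u)\Vert$.

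The core computation starts from writing
\[
  \Psi(u)-s_{uv}\Psi(v) = \eta(u)F(u)-s_{uv}\eta(v)F(v)
    = \eta(u)\bigl(F(u)-s_{uv}F(v)\bigr) + \bigl(\eta(u)-\eta(v)\bigr)s_{uv}F(v),
\]
so that by the triangle inequality and $\eta(u)\le 1$, $|s_{uv}|=1$,
\[
  \Vert\Psi(u)-s_{uv}\Psi(v)\Vert \le \Vert F(u)-s_{uv}F(v)\Vert + |\eta(u)-\eta(v)|\,\Vert F(v)\Vert.
\]
Next I would bound $|\eta(u)-\eta(v)|$. Since $t\mapsto\max\{0,1-\tfrac1\epsilon t\}$ is $\tfrac1\epsilon$-Lipschitz and $t\mapsto d_F(t,S\cap V_F)$ is $1$-Lipschitz with respect to $d_F$ (being a distance function), and since $u,v\in V_F$, we get $|\eta(u)-\eta(v)| \le \tfrac1\epsilon\, d_F(u,v) = \tfrac1\epsilon\, d(F(u),F(v))$. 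Therefore
\[
  |\eta(u)-\eta(v)|\,\Vert F(v)\Vert \le \frac1\epsilon\, d(F(u),F(v))\,\Vert F(v)\Vert \le \frac1\epsilon\,\Vert F(u)-s_{uv}F(v)\Vert,
\]
where the last step is exactly Lemma~\ref{lemma:keyForLocal}, using $\Vert F(v)\Vert\le\Vert F(u)\Vert$. Adding the two contributions gives $\Vert\Psi(u)-s_{uv}\Psi(v)\Vert \le \bigl(1+\tfrac1\epsilon\bigr)\Vert F(u)-s_{uv}F(v)\Vert$, which is \eqref{eq:localization}.

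The only point that needs a little care — the ``main obstacle,'' though it is minor — is the treatment of the $1$-Lipschitz property of $u\mapsto d_F(u,S\cap V_F)$ on $V_F$: this uses that $d_F$ is a genuine pseudometric on $V_F$ (so the triangle inequality holds), which is inherited from $d$ being a metric on $\mathbb{S}^{2n-1}/\Gamma$, and one should note that the set $S\cap V_F$ over which the infimum defining $d_F(\cdot,S\cap V_F)$ is taken is the same for both $u$ and $v$; if $S\cap V_F=\emptyset$ the convention is that this distance is $+\infty$ and then $\eta\equiv 0$ on $V_F$, making both sides of \eqref{eq:localization} zero again. One also has to make sure the algebraic identity for $\Psi(u)-s_{uv}\Psi(v)$ above is valid when one endpoint is not in $V_F$, but that case was already handled separately, so in the displayed splitting we always have both $\eta(u),\eta(v)$ defined via the ``otherwise'' branch. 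With these remarks the proof is complete.
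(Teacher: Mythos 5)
Your proof is correct and follows essentially the same route as the paper: split $\Psi(u)-s_{uv}\Psi(v)=\eta(u)\bigl(F(u)-s_{uv}F(v)\bigr)+\bigl(\eta(u)-\eta(v)\bigr)s_{uv}F(v)$, use the $\tfrac1\epsilon$-Lipschitz property of $\eta$ in $d_F$, and invoke Lemma~\ref{lemma:keyForLocal}. In fact your w.l.o.g.\ normalization $\Vert F(v)\Vert\le\Vert F(u)\Vert$ is the one that makes the last application of Lemma~\ref{lemma:keyForLocal} literally correct (the paper states the opposite inequality, apparently a typo, since its subsequent bound produces $\Vert F(v)\Vert$ as the factor and the lemma requires this to be the smaller norm), and your explicit treatment of the degenerate cases ($F(u)=0$ or $F(v)=0$, and $S\cap V_F=\emptyset$) is a welcome precision that the paper leaves implicit.
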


\begin{proof}
  If at least one of $F(u)$ and $F(v)$ is equal to zero, then the
  estimate (\ref{eq:localization}) holds trivially. Hence, we suppose
  that $u,v\in V_F$. W.l.o.g., we can assume that $\Vert F(u)\Vert\leq
  \Vert F(v)\Vert$ and calculate
  \begin{align*}
    \Vert\Psi(u)-s_{uv}\Psi(v)\Vert
	   =& \ \Vert\eta(u)F(u)-s_{uv}\eta(v)F(v)\Vert\\
    \leq& \ |\eta(u)|\cdot\Vert F(u)-s_{uv}F(v)\Vert+|\eta(u)-\eta(v)|\cdot\Vert F(v)\Vert\\
    \leq& \ \Vert F(u)-s_{uv}F(v)\Vert+\frac{d_F(u,v)\Vert F(v)\Vert}{\epsilon}.
  \end{align*}
  Applying Lemma \ref{lemma:keyForLocal} completes the proof.
\end{proof}
Note that the inequality (\ref{eq:localization}) is useful for the estimate of the numerator of the Rayleigh quotient of $\Psi$.

\subsection{Decomposition of the underlying space via orthonormal functions}

For later purposes, we work on a general measure space $(\mathcal{V},\mu)$ in this subsection, where $\mathcal{V}$ is a topological space and $\mu$ is a Borel measure. Two particular cases we have in mind are a vertex set $V$ of a finite graph with a measure $\mu: V\to \mathbb{R}^+$, and a closed Riemannian manifold with its Riemannian volume measure. We will apply the results in this subsection to the latter case in Section \ref{section:manifold case}.

On $(\mathcal{V},\mu)$, we further assume that there exist $n$ measurable functions
\begin{equation*}
f_1, f_2, \ldots, f_n: \mathcal{V}\to \mathbb{C},
\end{equation*}
which are orthonormal, i.e., for any $i,j\in [n]$,
\begin{equation*}
\langle f_i, f_j\rangle:=\int_{\mathcal{V}}f_i\overline{f_j}d\mu=\delta_{ij}.
\end{equation*}
Then the map $F: \mathcal{V}\to \mathbb{C}^n$ is given accordingly as in (\ref{fct:F}).

We consider the measure $\mu_F$ on $\mathcal{V}$ given by
\begin{equation*}
d\mu_F=\Vert F\Vert^2d\mu.
\end{equation*}
For any two points $x,y$ in $\mathcal{V}_F:=\{x\in \mathcal{V}: F(x)\neq 0\}$, we have the distance between them
\begin{equation}\label{eq:metricsingeneralspace}
d_F(x,y):=\min_{\gamma\in \Gamma}\left\Vert\frac{F(x)}{\Vert F(x)\Vert}-\gamma\frac{F(y)}{\Vert F(y)\Vert}\right\Vert.
\end{equation}
The main result of this subsection is the following theorem.
\begin{Thm}\label{thm:decomposition}
Let $(\mathcal{V}_F, d_F, \mu_F)$ be as above. There exist an absolute constant $C_0$ and a nontrivial $n$-subpartition $\{T_i\}_{[n]}$ of $\mathcal{V}_F$ such that
 \begin{itemize}
  \item [(i)] $d_F(T_p, T_q)\geq \frac{2}{C_0n^{5/2}}$, for all $p,q\in [n]$, $p\neq q$,
  \item [(ii)] $\mu_F(T_p)\geq \frac{1}{2n}\mu_F(\mathcal{V}_F)$, for all $p\in [n]$.
  \end{itemize}
\end{Thm}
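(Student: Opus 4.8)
The plan is to adapt the padded-partition / random-partition machinery that Lee--Oveis Gharan--Trevisan developed for spheres (and that Liu adapted to real projective spaces) to the metric space $(\mathcal{V}_F,d_F)$, using the doubling estimate from Proposition~\ref{pro:doubling} in place of any curvature input. The key structural fact is that the pushforward measure $\mu_F$ has total mass $n$, because $\int_{\mathcal{V}}\|F\|^2\,d\mu=\sum_{p\in[n]}\langle f_p,f_p\rangle=n$; this is what makes an $n$-subpartition with pieces of mass $\ge \frac{1}{2n}\mu_F(\mathcal{V}_F)$ a reasonable target, since we are only asking each of $n$ pieces to carry a $\frac{1}{2n}$-fraction.

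First I would recall the general combinatorial lemma (essentially \cite[Section~3]{LOT2013}, see also \cite[Lemma 2.4]{Liu13}) that says: if $(\mathbb{X},d_{\mathbb{X}})$ is a metric space with metric doubling constant $\rho$ equipped with a finite Borel measure $\nu$, then for any $m$ one can find $m$ disjoint Borel sets $T_1,\dots,T_m$, each of $\nu$-measure at least $\frac{1}{2m}\nu(\mathbb{X})$, and pairwise at distance at least $c\,\mathrm{diam}(\mathbb{X})/(m^{O(1)}\log_2\rho)$ — or more precisely, the separation one obtains scales like $\mathrm{diam}/(m^{5/2})$ once $\log_2\rho$ is absorbed. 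The proof of that lemma is the usual greedy "ball-growing" argument: repeatedly pick a heaviest remaining region, grow a random-radius ball around it, the doubling property controls how much mass the annular "collar" you must discard can contain, and iterating $m$ times leaves you with $m$ well-separated heavy cores. I would apply this with $\mathbb{X}=\mathcal{V}_F$, $d_{\mathbb{X}}=d_F$, $\nu=\mu_F$, and $m=n$.

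To turn the abstract output into the two stated properties I need two quantitative inputs. The first is that $\log_2\rho$, where $\rho$ is the doubling constant of $(\mathcal{V}_F,d_F)$, is at most $Cn$: this follows from Proposition~\ref{pro:doubling}, since $d_F$ is (by definition~(\ref{eq:metricsingeneralspace})) the pullback under $x\mapsto[F(x)/\|F(x)\|]$ of the metric $d$ on $\mathbb{S}^{2n-1}/\Gamma$, and pulling back along a map can only decrease the doubling constant of the relevant subset. The second is a lower bound on $\mathrm{diam}(\mathcal{V}_F,d_F)$, or rather a way to dispose of the diameter: because $d$ is bounded above by the (bounded) diameter of $\mathbb{S}^{2n-1}/\Gamma$, which is an absolute constant (at most $2$), the separation $c\,\mathrm{diam}/(n^{5/2}\log_2\rho)$ coming out of the ball-growing lemma is of the form $\ge \frac{2}{C_0 n^{5/2}}$ after folding the $\log_2\rho\le Cn$ bound and all absolute constants into $C_0$; the factor $n^{5/2}$ is exactly the exponent that the $m$-fold iteration of the greedy argument produces in this doubling regime. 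Matching powers of $n$ here — checking that the ball-growing iteration, with the $\log_2\rho = O(n)$ input, yields separation $\Omega(n^{-5/2})$ rather than some larger negative power — is the one place where care is needed; everything else is bookkeeping.

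The main obstacle I expect is precisely this tracking of the exponent of $n$ and verifying that $d_F$ genuinely inherits doubling from the quotient space despite $\mathcal{V}_F$ being, a priori, an arbitrary measurable subset with a pseudometric rather than a nice manifold. The cleanest route is to push the measure $\mu_F$ forward to $\mathbb{S}^{2n-1}/\Gamma$ via $x\mapsto[F(x)/\|F(x)\|]$ and run the entire ball-growing argument downstairs in $(\mathbb{S}^{2n-1}/\Gamma, d)$, where Proposition~\ref{pro:doubling} applies verbatim; then pull the resulting partition back to $\mathcal{V}_F$, noting that preimages of disjoint sets are disjoint, preimages of sets at distance $\ge\delta$ are at $d_F$-distance $\ge\delta$, and the pushforward is measure-preserving so the mass bounds transfer. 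With that reduction in hand the theorem is just the combinatorial lemma of \cite{LOT2013,Liu13} applied to $(\mathbb{S}^{2n-1}/\Gamma, d, (\text{pushforward of }\mu_F))$ with $m=n$, and $C_0$ is the absolute constant it produces.
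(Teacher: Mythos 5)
Your sketch correctly identifies the random (padded) partition machinery and the role of the doubling bound from Proposition~\ref{pro:doubling}, and the reduction to the quotient space $\mathbb{S}^{2n-1}/\Gamma$ is the right way to set things up. But there is a genuine gap: you never use the orthonormality of $f_1,\dots,f_n$ beyond the normalization $\mu_F(\mathcal{V})=n$, and that is not enough. Doubling plus total mass $n$ does not prevent $\mu_F$ from being almost entirely concentrated in a tiny ball of $\mathbb{S}^{2n-1}/\Gamma$ — indeed, if the $f_i$ were, say, all proportional, $F(x)/\Vert F(x)\Vert$ would be constant up to $\Gamma$-action and no two points of $\mathcal{V}_F$ could be $d_F$-separated at all, so no nontrivial $n$-subpartition with mass bounds exists. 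The total mass being $n$ is a normalization, not a spreading statement; it is fully compatible with all the mass sitting in a single cluster. So the step "iterating $m$ times leaves you with $m$ well-separated heavy cores" is unjustified as stated: after discarding collars and removing the first core, you have no lower bound on the remaining mass, hence no guarantee the $n$-th core is heavy.

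The ingredient you are missing is exactly the paper's spreading lemma (Lemma~\ref{lemma:spreading}): using orthonormality in the form $\int_\mathcal{V}|\langle\mathbf{z},F\rangle|^2\,d\mu=1$ for every unit $\mathbf{z}\in\mathbb{C}^n$, one shows that any set $S$ of $d_F$-diameter $\leq r<1$ has $\mu_F(S)\leq \frac{1}{n(1-r^2)}\mu_F(\mathcal{V})$. This upper bound on the mass of small-diameter pieces is what forces the padded partition of Lemma~\ref{lemma:padded_measureversion} (with $r=\frac{1}{3\sqrt n}$ and $\delta=\frac{1}{4n}$) to produce at least $n$ groups after merging light pieces, each carrying at least $\frac{1}{2n}$ of the total. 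It is also the source of the choice $r\sim n^{-1/2}$, which combined with $\alpha\sim \log_2\rho_\Gamma/\delta\sim n^2$ from Lemma~\ref{lemma:paddedpartition} yields the separation $2r/\alpha\sim n^{-5/2}$; so the exponent $5/2$ cannot be "read off" from doubling alone, it comes from this interplay. Add the spreading lemma (and the merging step that uses it) to your argument and it becomes a correct reproduction of the paper's proof; without it, the proposal does not go through.
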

The difficulty for the construction of the above $n$-subpartition is to achieve the property $(ii)$. That is, we have to find a subpartition which possesses large enough measure. When $d_F(x,y)$ is given by the spherical distance $\left\Vert\frac{F(x)}{\Vert F(x)\Vert}-\frac{F(y)}{\Vert F(y)\Vert}\right\Vert$, Theorem \ref{thm:decomposition} was proved in \cite[Lemma 3.5]{LOT2013}. In our situation, we have to deal with the metrics, given in (\ref{eq:metricsingeneralspace}), of lens spaces or complex projective spaces. We refer the reader to \cite{GrigoryanNetrusovYau} for another interesting decomposition result.

An important ingredient of the proof is the following lemma derived from the random partition theory \cite{GKL2003,LeeNaor2005}.
Note that a partition of a set $A$ can also be considered as a map $P:A\to 2^A$, where $x\in A$
is mapped to the unique set $P(x)$ of the partition that contains
$x$. A random partition $\mathcal{P}$ of $A$ is a probability measure $\nu$
on a set of partitions of $A$. Then $\mathcal{P}(x)$ is understood as a random variable from the probability space to subsets of $A$ containing $x$.


\begin{Lem}\label{lemma:paddedpartition}
  Let A be a subset of the metric space $(\mathbb{S}^{2n-1}/\Gamma,
  d)$ (for $d$ recall (\ref{eq:roughmetric})). Then for every $r>0$ and $\delta\in (0,1)$, there
  exists a random partition $\mathcal{P}$ of $A$, i.e., a distribution
  $\nu$ over partitions of $A$ such that
  \begin{itemize}
  \item [(i)] $\mathrm{diam}(S)\leq r$ for any $S$ in every partition
    $P$ in the support of $\nu$,
  \item [(ii)] $\mathbb{P}_{\nu}\left[B_{r/\alpha}(x)\subseteq
      \mathcal{P}(x)\right]\geq 1-\delta$ for all $x\in A$, where
    $\alpha=32\log_2(\rho_{\Gamma})/\delta$.
  \end{itemize}
\end{Lem}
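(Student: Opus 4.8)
The plan is to invoke the standard padded random partition (a.k.a.\ Lipschitz random partition) machinery for doubling metric spaces, specialized to the metric space $(\mathbb{S}^{2n-1}/\Gamma, d)$. The key structural input is Proposition~\ref{pro:doubling}, which tells us that $\log_2 \rho_\Gamma \leq Cn$; in particular the space is doubling with a controlled doubling constant. The subset $A \subseteq \mathbb{S}^{2n-1}/\Gamma$ inherits the doubling property with the same constant (restricting a metric to a subset cannot increase the doubling constant beyond a universal factor). So the whole lemma reduces to the known fact that any doubling metric space admits, for every scale $r>0$ and every $\delta \in (0,1)$, a random partition into pieces of diameter at most $r$ which is ``$(r/\alpha)$-padded with probability $1-\delta$'' for $\alpha = 32\log_2(\rho)/\delta$. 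This is precisely the content of the constructions of Gupta--Krauthgamer--Lee \cite{GKL2003} and Lee--Naor \cite{LeeNaor2005}, which are cited.

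First I would recall the explicit construction: fix a maximal $(r/4)$-net $\{y_i\}$ of $A$ (or a net at an appropriate fraction of $r$), choose a single random radius $R$ uniformly in $[r/4, r/2]$ (or according to a truncated exponential, depending on which reference's bound one wants to reproduce), choose a uniformly random permutation $\pi$ of the net points, and assign each $x \in A$ to the cluster of the first net point $y_{\pi(j)}$ (in the order given by $\pi$) whose ball $B_R(y_{\pi(j)})$ contains $x$. This produces a partition whose pieces all have diameter at most $2R \leq r$, giving property (i). For property (ii), the point $x$ fails to be padded, i.e.\ $B_{r/\alpha}(x) \not\subseteq \mathcal{P}(x)$, only if some net point's random ball boundary passes within distance $r/\alpha$ of $x$ \emph{and} that net point is ranked in the permutation before the net point that actually captures $x$. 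A union bound over the net points that can possibly interfere with $B_{r/4}(x)$ — of which there are at most $\rho^{O(1)}$ many by the doubling property — together with the observation that the probability a ``closer in rank'' point has its boundary in the critical annulus is $O((r/\alpha)/r) = O(1/\alpha)$, yields failure probability at most $C\log_2(\rho)/\alpha$. Choosing $\alpha = 32\log_2(\rho_\Gamma)/\delta$ makes this at most $\delta$.

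I would then make the minor adjustment needed to phrase the doubling-space lemma for an arbitrary subset $A$ rather than the whole space: one runs the argument intrinsically inside $A$ with an intrinsic net, noting that balls of $A$ are traces of balls of $\mathbb{S}^{2n-1}/\Gamma$ and hence the doubling bound with constant $\rho_\Gamma$ still controls the number of interfering net points (up to absorbing a universal constant into the exponent, which does not affect the stated $\alpha$ up to the universal $32$). Substituting $\rho = \rho_\Gamma$ into the generic statement gives exactly (i) and (ii) as stated.

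The main obstacle is essentially bookkeeping rather than conceptual: one must make sure the constants line up so that $\alpha = 32 \log_2(\rho_\Gamma)/\delta$ is literally the exponent that comes out, which means being careful about the net scale, the distribution of the random radius, and the union-bound cardinality, since different references state the padding lemma with slightly different absolute constants. In practice I would simply cite the precise formulation from \cite{LeeNaor2005} (or \cite{GKL2003}) that already yields this form — the only thing to verify is that their ``$\log$ of the doubling constant'' is our $\log_2 \rho_\Gamma$, which is immediate, and that passing to a subset $A$ is harmless, which is standard. No feature of lens spaces or complex projective spaces beyond the doubling bound of Proposition~\ref{pro:doubling} is used.
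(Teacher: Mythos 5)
Your proposal is correct and takes essentially the same approach as the paper: the paper likewise defers the full proof to \cite[Theorem 3.2]{GKL2003} and \cite[Lemma 3.11]{LeeNaor2005} (and \cite[Theorem 2.4]{Liu13}), sketching only the net--random-radius--random-permutation construction, with Proposition~\ref{pro:doubling} supplying the doubling bound that controls the union-bound cardinality. The only cosmetic difference is that the paper builds an $r/4$-net of all of $\mathbb{S}^{2n-1}/\Gamma$ (using compactness to get finiteness) and restricts the resulting partition to $A$, whereas you run the construction intrinsically inside $A$; both variants yield the stated conclusion.
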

We refer to \cite[Theorem 3.2]{GKL2003} and \cite[Lemma 3.11]{LeeNaor2005} for the proof, see also \cite[Theorem 2.4]{Liu13}.
For convenience, we describe briefly the construction of the random partition claimed in Lemma \ref{lemma:paddedpartition}. Let $\{x_i\}_{[m]}$ be a $r/4$-net of $\mathbb{S}^{2n-1}/\Gamma$, that is, $d(x_i, x_j)\geq r/4$, for any $i\neq j$, and $\mathbb{S}^{2n-1}/\Gamma=\bigcup_{i\in[m]}B_{r/4}(x_i)$. Since $(\mathbb{S}^{2n-1}/\Gamma, d)$ is compact, $m$ is a finite number. For $R\in [r/4,r/2]$, we construct a partition of $(\mathbb{S}^{2n-1}/\Gamma, d)$ as follows. A permutation $\sigma$ of the set $[m]$ provides an order for all points in the net which is used to define, for every $i\in [m]$,
\begin{equation*}
S_{i}^{R,\sigma}:=\{x\in \mathbb{S}^{2n-1}/\Gamma \mid x\in B_R(x_i)\text{ and }\sigma(i)<\sigma(j)\text{ for all } j\in [m] \text{ with } x\in B_R(x_j)\}.
\end{equation*}
That is, we have $x\in S_{i}^{R,\sigma}$ if $\sigma(i)$ is the smallest number for which $x$ is contained in $B_R(x_i)$.
Then $P^{R,\sigma}=\{S_{i}^{R,\sigma}\}_{[m]}$ constitutes a partition of $\mathbb{S}^{2n-1}/\Gamma$. Now let $\sigma$ be a uniformly random permutation of $[m]$, and $R$ be chosen uniformly random from the interval $[r/4,r/2]$. These choices define a random partition $\mathcal{P}$. If we choose $R$ uniformly from a fine enough discretization of the interval $[r/4,r/2]$, we can make $\mathcal{P}$ to be finitely supported. In fact, this random partition fulfills the two properties in Lemma \ref{lemma:paddedpartition}.


\begin{Rmk}
Lemma \ref{lemma:paddedpartition} holds true for any metric space. In particular, the finiteness of the $r/4$-net is not necessary. This is shown in \cite[Lemma 3.11]{LeeNaor2005}.
\end{Rmk}

Lemma \ref{lemma:paddedpartition} leads to the following result. Note that, the property $(ii)$ in Lemma \ref{lemma:paddedpartition} ensures the existence of at least one subpartition which captures a large fraction of the whole measure.
\begin{Lem}\label{lemma:padded_measureversion}
On $(\mathcal{V}_F, d_F, \mu_F)$, for any $r>0$ and $\delta\in (0,1)$, there exists a nontrivial subpartition $\{\widehat{S}_i\}_{[m]}$ such that
 \begin{itemize}
  \item [(i)] $\mathrm{diam}(\widehat{S}_i, d_F)\leq r$ for any $i\in [m]$,
  \item [(ii)] $d_F(\widehat{S}_i, \widehat{S}_j)\geq 2r/\alpha$, where $\alpha=32\log_2(\rho_\Gamma)/\delta$,
  \item [(iii)] $\sum_{i\in [m]}\mu_F(\widehat{S}_i)\geq (1-\delta)\mu_F(\mathcal{V}_F)$.
  \end{itemize}
\end{Lem}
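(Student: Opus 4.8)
The plan is to transport the whole problem to the compact metric space $(\mathbb{S}^{2n-1}/\Gamma,d)$, apply the padded random partition of Lemma~\ref{lemma:paddedpartition} there, and then pull one good deterministic partition back to $\mathcal{V}_F$ along the map $\bar F\colon\mathcal{V}_F\to\mathbb{S}^{2n-1}/\Gamma$, $x\mapsto[F(x)/\Vert F(x)\Vert]$. The point is that, by the very definition~(\ref{eq:metricsingeneralspace}), $d_F$ is exactly the pull-back of $d$, i.e.\ $d_F(x,y)=d(\bar F(x),\bar F(y))$; and $\bar F$ is Borel measurable (indeed continuous in the manifold application, where $\mathcal{V}_F$ is open). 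I also record that $\mu_F(\mathcal{V}_F)=\int\Vert F\Vert^2\,d\mu=\sum_{i\in[n]}\langle f_i,f_i\rangle=n>0$, so that any subfamily of positive total $\mu_F$-measure is automatically nonempty.

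First I would invoke Lemma~\ref{lemma:paddedpartition} on $A=\mathbb{S}^{2n-1}/\Gamma$ with the given $r$ and $\delta$; by Proposition~\ref{pro:doubling} the constant appearing there is precisely $\alpha=32\log_2(\rho_\Gamma)/\delta$, and compactness of $\mathbb{S}^{2n-1}/\Gamma$ lets us take the random partition $\mathcal{P}$, with distribution $\nu$, to be finitely supported. Thus every part $S$ of every $P$ in the support of $\nu$ has $d$-diameter $\le r$, and $\mathbb{P}_\nu[B_{r/\alpha}(y)\subseteq\mathcal{P}(y)]\ge 1-\delta$ for every $y$.

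The heart of the argument is an averaging step over $\nu$. For a partition $P=\{S_i\}_i$ put $S_i^{\circ}:=\{y\in S_i:B_{r/\alpha}(y)\subseteq S_i\}$ (the "padded core" of $S_i$) and, for $x\in\mathcal{V}_F$, set $g_P(x)=1$ if $\bar F(x)\in S_i^{\circ}$ for some $i$ and $g_P(x)=0$ otherwise. Exchanging the (finite) $\nu$-expectation with the $\mu_F$-integral and applying Lemma~\ref{lemma:paddedpartition}(ii) pointwise at $y=\bar F(x)$ gives
\begin{equation*}
\mathbb{E}_\nu\int_{\mathcal{V}_F}g_{\mathcal{P}}\,d\mu_F=\int_{\mathcal{V}_F}\mathbb{P}_\nu\big[B_{r/\alpha}(\bar F(x))\subseteq\mathcal{P}(\bar F(x))\big]\,d\mu_F(x)\ge(1-\delta)\,\mu_F(\mathcal{V}_F),
\end{equation*}
so there is a fixed $P=\{S_i\}_i$ in the support of $\nu$ with $\int_{\mathcal{V}_F}g_P\,d\mu_F\ge(1-\delta)\mu_F(\mathcal{V}_F)$. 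I would then set $\widehat S_i:=\bar F^{-1}(S_i^{\circ})$, discard the empty members, and relabel the survivors as $\{\widehat S_i\}_{[m]}$.

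It remains to verify the three properties, which are now essentially formal. The $\widehat S_i$ are pairwise disjoint since the $S_i$ are; property (iii) is literally $\sum_i\mu_F(\widehat S_i)=\int g_P\,d\mu_F\ge(1-\delta)\mu_F(\mathcal{V}_F)$ (which in particular forces $m\ge1$, using $\mu_F(\mathcal{V}_F)>0$); property (i) follows from $\bar F(\widehat S_i)\subseteq S_i$, whence $\mathrm{diam}(\widehat S_i,d_F)\le\mathrm{diam}(S_i,d)\le r$; and for (ii), given $i\ne j$, $x\in\widehat S_i$, $x'\in\widehat S_j$, the balls $B_{r/\alpha}(\bar F(x))\subseteq S_i$ and $B_{r/\alpha}(\bar F(x'))\subseteq S_j$ are disjoint, so since $(\mathbb{S}^{2n-1}/\Gamma,d)$ is bi-Lipschitz equivalent to the geodesic metric $d_{quot}$ (cf.~(\ref{eq:equivalentMetric})), disjointness of two $(r/\alpha)$-balls forces their centres to be at distance $\ge 2r/\alpha$, i.e.\ $d_F(x,x')=d(\bar F(x),\bar F(x'))\ge 2r/\alpha$ (the constant lost through~(\ref{eq:equivalentMetric}) being harmlessly absorbed into $\alpha$). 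The step I expect to be the real obstacle is (iii): a naive deterministic net-partition would separate the cells but could shed a large fraction of the mass near cell boundaries, and it is precisely to rescue the measure that the \emph{padded random} partition of Lemma~\ref{lemma:paddedpartition} and the expectation argument above are needed; the only other technical point is the measurability of $\bar F$ and of the cores $S_i^{\circ}$, so that the exchange of $\mathbb{E}_\nu$ and $\int d\mu_F$ is legitimate.
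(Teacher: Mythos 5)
Your proof is correct and follows essentially the same route as the paper: pull back the padded random partition from $\mathbb{S}^{2n-1}/\Gamma$ via $F$, exchange the $\nu$-expectation with the $\mu_F$-integral to find one good deterministic partition, and take the padded cores $\widehat S_i$. You additionally spell out the verification of properties (i) and (ii), which the paper leaves implicit; note only that for (ii) the direct argument (since $x'\notin B_{r/\alpha}(x)\subseteq S_i$) already gives $d_F(x,x')\ge r/\alpha$, which suffices up to the absolute constant, so the detour through the bi-Lipschitz comparison with $d_{quot}$ is not strictly needed.
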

\begin{proof}
Let $\mathcal{P}$ be the random partition on $\mathcal{V}_F$ induced from the one constructed in Lemma \ref{lemma:paddedpartition} via the map $F$. Let $I_{B_{r/\alpha}(x)\subseteq \mathcal{P}(x)}$ be the indicator function for the event that $B_{r/\alpha}(x)\subseteq \mathcal{P}(x)$ happens. Then we obtain from Lemma \ref{lemma:paddedpartition} $(ii)$
\begin{equation}
\mathbb{E}_{\mathcal{P}}\left(\int_{\mathcal{V}}I_{B_{r/\alpha}(x)\subseteq \mathcal{P}(x)}d\mu_F(x)\right)\geq (1-\delta)\mu_F(\mathcal{V})
\end{equation}
by interchanging the expectation and the integral.
On the other hand, we have
\begin{align}
&\mathbb{E}_{\mathcal{P}}\left(\int_{\mathcal{V}}I_{B_{r/\alpha}(x)\subseteq \mathcal{P}(x)}d\mu_F(x)\right)\notag\\
=&\sum_{P\in \mathcal{P}}\sum_{S\in P}\int_{S}I_{B_{r/\alpha}(x)\subseteq \mathcal{P}(x)}d\mu_F(x)\mathbb{P}_{\nu}(P)\notag\\
=&\sum_{P\in \mathcal{P}}\sum_{S\in P}\int_{\widehat{S}}d\mu_F(x)\mathbb{P}_\nu(P),
\end{align}
where $\widehat{S}:=\{x\in S: B_{r/\alpha}(x)\subseteq S\}$. Hence, there exists a partition $P=\{S_i\}_{[m]}$ of $\mathcal{V}_F$ for some natural number $m$ such that
\begin{equation}
\sum_{i\in [m]}\mu_F(\widehat{S}_i)\geq (1-\delta)\mu_F(\mathcal{V}).
\end{equation} This completes the proof.
\end{proof}


In order to prove Theorem \ref{thm:decomposition}, we also need the following result.

\begin{Lem}\label{lemma:spreading}
  If a subset $S\subseteq \mathcal{V}$ satisfies $\mathrm{diam}(S\cap \mathcal{V}_F, d_F)\leq r$
  for some $r\in (0,1)$, then
  \begin{equation}
    \mu_F(S)\leq \frac{1}{n(1-r^2)}\mu_F(\mathcal{V}).
  \end{equation}
\end{Lem}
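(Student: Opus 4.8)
The plan is to compare all of $F$ on $S$ with a single test direction coming from one chosen point, and then to extract the factor $\tfrac1n$ from the orthonormality of $f_1,\dots,f_n$. If $S\cap\mathcal{V}_F=\emptyset$ there is nothing to prove, since $\mu_F$ is supported on $\mathcal{V}_F$; so fix $x_0\in S\cap\mathcal{V}_F$ and set $v_0:=F(x_0)/\Vert F(x_0)\Vert\in\mathbb{S}^{2n-1}$. Since $\Gamma$ (being $S_k^1$ or $U(1)$) is compact, for every $x\in S\cap\mathcal{V}_F$ the minimum defining $d_F(x,x_0)$ in \eqref{eq:metricsingeneralspace} is attained at some $\gamma_x\in\Gamma$, and the diameter hypothesis gives $\bigl\Vert F(x)/\Vert F(x)\Vert-\gamma_x v_0\bigr\Vert=d_F(x,x_0)\le r$.

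Next I would turn this into a pointwise lower bound for $|\langle F(x),v_0\rangle|$. Expanding the squared norm of the difference of two unit vectors yields $\mathrm{Re}\,\langle F(x)/\Vert F(x)\Vert,\gamma_x v_0\rangle\ge 1-\tfrac{r^2}{2}$. The only point that differs from the spherical case of \cite[Lemma 3.4]{LOT2013} is the presence of the factor $\gamma_x$; but because $\Gamma\subset U(1)$ we have $|\gamma_x|=1$, so $|\langle F(x)/\Vert F(x)\Vert,\gamma_x v_0\rangle|=|\langle F(x)/\Vert F(x)\Vert,v_0\rangle|$, and hence $|\langle F(x),v_0\rangle|\ge\bigl(1-\tfrac{r^2}{2}\bigr)\Vert F(x)\Vert\ge\sqrt{1-r^2}\,\Vert F(x)\Vert$, using $(1-\tfrac{r^2}{2})^2=1-r^2+\tfrac{r^4}{4}\ge 1-r^2$. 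This harmless bookkeeping with the $\Gamma$-quotient is the only mildly subtle step; everything else is linear algebra and integration.

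Finally I would integrate $|\langle F,v_0\rangle|^2\ge(1-r^2)\Vert F\Vert^2$ over $S$ (equivalently over $S\cap\mathcal{V}_F$, since $F$ vanishes off $\mathcal{V}_F$) to get $(1-r^2)\,\mu_F(S)\le\int_S|\langle F,v_0\rangle|^2\,d\mu\le\int_{\mathcal{V}}|\langle F,v_0\rangle|^2\,d\mu$. Writing $v_0=((v_0)_1,\dots,(v_0)_n)$, the function $x\mapsto\langle F(x),v_0\rangle$ equals $\sum_j\overline{(v_0)_j}f_j$, whose $L^2(\mathcal{V},\mu)$-norm squared is $\sum_j|(v_0)_j|^2=1$ by orthonormality; similarly $\mu_F(\mathcal{V})=\int_{\mathcal V}\Vert F\Vert^2\,d\mu=\sum_j\Vert f_j\Vert^2=n$. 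Combining, $(1-r^2)\,\mu_F(S)\le 1=\tfrac1n\mu_F(\mathcal{V})$, which is the asserted inequality.
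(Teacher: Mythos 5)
Your proof is correct and follows essentially the same strategy as the paper's: fix a reference point $x_0\in S\cap\mathcal{V}_F$, use orthonormality of the $f_j$'s to compute $\mu_F(\mathcal{V})=n$ and $\int_{\mathcal V}|\langle F,v_0\rangle|^2\,d\mu=1$, and then bound $|\langle F(x),v_0\rangle|$ from below on $S$ by absorbing the $\Gamma$-factor into the modulus and using the diameter bound together with $(1-\tfrac{r^2}{2})^2\ge 1-r^2$. The paper phrases the pointwise step via $|z|^2\ge\bigl(\tfrac{z+\bar z}{2}\bigr)^2$ and the definition of $d_F$ directly; your presentation, with the explicit $\gamma_x$ and the observation that $|\gamma_x|=1$ makes the modulus of the inner product $\Gamma$-invariant, is a cosmetic rearrangement of the same calculation.
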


\begin{proof}
  W.l.o.g., we can assume that $S\subseteq \mathcal{V}_F$. Using the fact
  that $f_1, \ldots, f_n$ are orthonormal, we obtain the following two
  properties. First, we have
  \begin{equation}\label{eq:spreading1}
    \mu_F(\mathcal{V})=\int_{\mathcal{V}}\sum_{p\in [n]}|f_p|^2d\mu=n.
  \end{equation}
  Second, we have for any $\mathbf{z}:=(z_1,z_2,\ldots,z_n)\in
  \mathbb{C}^n$ with $\Vert\mathbf{z}\Vert=1$,
  \begin{equation}\label{eq:spreading2}
    \int_{\mathcal{V}}\left|\langle \mathbf{z}, F(x)\rangle\right|^2d\mu(x)
		= \int_{\mathcal{V}}\sum_{p,q\in [n]} z_p\overline{z_q}\overline{f_p(x)}f_q(x)d\mu(x)=1.
  \end{equation}
  Combining (\ref{eq:spreading1}) and (\ref{eq:spreading2}), we conclude
  for any $y\in S$,
  \begin{align}
    \frac{\mu_F(\mathcal{V})}{n}
		=& \int_{\mathcal{V}}\left|\left\langle
									\frac{F(y)}{\Vert F(y)\Vert}, F(x)
							 \right\rangle\right|^2d\mu(x)\notag\\
		=& \int_{\mathcal{V}} \left|\left\langle
													\frac{F(y)}{\Vert F(y)\Vert}, \frac{F(x)}{\Vert F(x)\Vert}
												\right\rangle\right|^2d\mu_F(x).
  \end{align}
  Since $|z|^2\geq \left(z+\overline{z}\right)^2/4$ for each $z\in \mathbb{C}$, we obtain that for any $\gamma\in
  \Gamma$:
  \begin{align}
    \left|\left\langle \frac{F(y)}{\Vert F(y)\Vert}, \frac{F(x)}{\Vert F(x)\Vert}\right\rangle\right|^2
		=& \ \left|\left\langle
				\frac{F(y)}{\Vert F(y)\Vert}, \gamma\frac{F(x)}{\Vert F(x)\Vert}
			 \right\rangle\right|^2\notag\\
     \geq& \ \frac{1}{4}\left(2-\left\Vert\frac{F(y)}{\Vert F(y)\Vert}-\gamma\frac{F(x)}{\Vert F(x)\Vert}\right\Vert^2\right)^2.
  \end{align}
  Recalling (\ref{eq:metricsingeneralspace}), the definition of $d_F$, we arrive at
  \begin{equation}
    \frac{\mu_F(\mathcal{V})}{n}\geq\int_{S}\left(1-\frac{1}{2}d_F(y,x)^2\right)^2d\mu_F(x)\geq (1-r^2)\mu_F(S).
  \end{equation}
\end{proof}

\begin{proof}[Proof of Theorem \ref{thm:decomposition}]
With Lemma \ref{lemma:padded_measureversion} and Lemma \ref{lemma:spreading} at hand, Theorem \ref{thm:decomposition} can be proved similarly as
\cite[Lemma 3.5]{LOT2013}, see also \cite[Lemma 6.2]{Liu13}. For convenience, we recall it here. Let $\{\widehat{S}_i\}_{[m]}$ be the subpartition constructed in Lemma \ref{lemma:padded_measureversion}. Then by Lemma \ref{lemma:spreading}, we have for each $i\in [m]$,
\begin{equation}
\mu_F(\widehat{S}_i)\leq\frac{1}{n(1-r^2)}\mu_F(\mathcal{V}).
\end{equation}
We apply the following procedure to $\{\widehat{S}_i\}_{[m]}$. If we can find two of them, say $\widehat{S}_i$ and $\widehat{S}_j$, such that
\begin{equation*}
\mu_F(\widehat{S}_i)\leq \frac{1}{2n}\mu_F(\mathcal{V}),\,\,\,\, \mu_F(\widehat{S}_j)\leq \frac{1}{2n}\mu_F(\mathcal{V}),
\end{equation*}
then replace them by $\widehat{S}_i\cup\widehat{S}_j$. Thus, when we stop, we obtain the sets $T_1, T_2, \ldots, T_l$ for some number $l$, such that
\begin{equation*}
\mu_F(T_i)\leq\frac{1}{n(1-r^2)}\mu_F(\mathcal{V}), \,\,\,\forall\, i\in [l],
\end{equation*}
and
\begin{equation*}
\mu_F(T_i)\geq\frac{1}{2n}\mu_F(\mathcal{V}), \,\,\,\forall\, i\in [l-1].
\end{equation*}
Setting $r=\frac{1}{3\sqrt{n}}$ and $\delta=\frac{1}{4n}$, we check that
\begin{equation}
(n-1)\cdot\frac{1}{n(1-r^2)}<1-\delta-\frac{1}{2n}.
\end{equation}
This implies that $l\geq n$. Moreover, if we redefine $T_n:=\bigcup_{j=n}^lT_j$, we have
\begin{equation}
\mu_F(T_n)\geq\frac{1}{2n}\mu_F(\mathcal{V}).
\end{equation}
Thus the subpartition $\{T_i\}_{[n]}$ satisfies the property $(ii)$. One can then verify the property $(i)$ by Proposition \ref{pro:doubling}  and Lemma \ref{lemma:padded_measureversion}.
\end{proof}

%

\subsection{Proof of Theorem \ref{thm:HigherOrder CheegerInequality}}\label{section:proofof higher}

We first prove the upper bound of (\ref{eq:HigherOrder Cheeger inequality}). Let $\{T_i\}_{[n]}$ be the subpartition of $V_F$ obtained from Theorem \ref{thm:decomposition}. Choosing $\epsilon=\frac{1}{C_0n^{5/2}}$, we define the cut-off
  functions $\eta_{p}$ as in (\ref{eq:cutoff}) (replacing
  the set $S$ there by $T_p$). Then the maps $\Psi_p:=\eta_{p}F$,
  $p\in [n]$, have pairwise disjoint support. Recalling that
  $\Psi_p|_{T_p}=F|_{T_p}$, and applying Lemmas
  \ref{lemma:localization} as well as fact $(ii)$ of Theorem \ref{thm:decomposition}, we obtain that
  for any $p\in [n]$,
  \begin{align}
    \mathcal{R}_{\mu}^s(\Psi_p)&\leq \left(1+\frac{1}{\epsilon}\right)^2\frac{\sum_{\{u,v\}\in E}w_{uv}\Vert F(u)-s_{uv}F(v)\Vert^2}{\sum_{u\in T_p}\mu(u)\Vert F(u)\Vert^2}\notag\\
    &\leq 2n(1+C_0n^{5/2})^2\mathcal{R}_{\mu}^s(F)\leq Cn^6\mathcal{R}_{\mu}^s(F),
  \end{align}
  where $C$ is an absolute constant.
For every $p\in [n]$, the map $\Psi_p$ has
at least one coordinate function $\psi_p$ that satisfies $\mathcal{R}_{\mu}^s(\psi_p)
\leq \mathcal{R}_{\mu}^s(\Psi_p)$. In particular, we find functions $\psi_p$,
$p\in [n]$, with pairwise disjoint support and an absolute constant $C$ such that
\begin{equation}
  \mathcal{R}_{\mu}^s(\psi_p)\leq Cn^6 \mathcal{R}_{\mu}^s(F).
\end{equation}
Now inequality (\ref{eq:forfinalproof}) and Lemma
\ref{lemma:clusteringI} for $\Gamma = S_k^1$ or Lemma
\ref{lemma:clusteringIU(1)} for $\Gamma = U(1)$ yield the desired
upper bound of (\ref{eq:HigherOrder Cheeger inequality}).

Now we prove the lower bound of (\ref{eq:HigherOrder Cheeger inequality}).
Suppose that the $n$-way Cheeger constant $h_n^{s}(\mu)$ is achieved by the
nontrivial $n$-subpartition $\{\widetilde{V}_p\}_{[n]}$ and that the function
$\widetilde{\tau}_p: \widetilde{V}_p\to \Gamma$ achieves the frustration
index $\iota^s(\widetilde V_p)$ for each $p\in[n]$. Moreover, consider
functions $\widetilde{f}_p: V \to {\mathbb C}$ with pairwise disjoint support given
for $p\in [n]$ by:
\begin{equation}
 \widetilde{f}_p(u):=\begin{cases}
                   		\widetilde{\tau}_p(u),	& \text{if $u\in \widetilde{V}_p$;} \\
                   		0, 						& \text{otherwise.}
                 \end{cases}
\end{equation}
By the min-max principle (\ref{eq:minmax}), we know
\begin{equation}\label{eq:Higher lower1}
  \lambda_n(\Delta_{\mu}^s)\leq \max_{a_1,\ldots, a_n}\mathcal{R}_{\mu}^s(\widetilde{f}_a),
\end{equation}
where the maximum is taken over all complex numbers $a_1, \ldots , a_n \in \C$ such that
$\widetilde{f}_a:=\sum_{p\in [n]}a_p\widetilde{f}_p$ is a nontrivial
linear combination of $\widetilde{f}_1,\ldots, \widetilde{f}_n$. This implies
\begin{equation}\label{eq:Higher lower2}
 \sum_{u\in V}\mu(u)|\widetilde{f}_a(u)|^2=\sum_{p\in [n]}|a_p|^2\mathrm{vol}_{\mu}(\widetilde{V}_p).
\end{equation}
We now want to relate \eqref{eq:Higher lower1} and \eqref{eq:Higher lower2} to the frustration
index and the boundary measure. To that direction, we set $B_{uv}:=w_{uv}|\widetilde{f}_a(u)-s_{uv}\widetilde{f}_a(v)|^2$
and obtain
\begin{equation*}
  \sum_{\{u,v\}\in E} B_{uv}
		= \frac{1}{2} \sum_{p,q \in [n]} \sum_{\substack{u \in \widetilde{V}_p\\v \in \widetilde{V}_q}} B_{uv}
		  + \sum_{p\in [n]}\sum_{\substack{u \in \widetilde{V}_p\\v \in V^*}} B_{uv}
		  + \frac{1}{2}\sum_{u,v \in V^*} B_{uv},
\end{equation*}
where $V^* = \left( \bigcup_{p \in [n]} \widetilde{V}_p \right)^c$.
For $u,v \in \widetilde{V}_p$, $p \in [n]$, we have
\begin{equation}
  |\widetilde{f}_a(u)-s_{uv}\widetilde{f}_a(v)|^2=|a_p|^2 \cdot |\widetilde{\tau}_p(u)-s_{uv}\widetilde{\tau}_p(v)|^2,
\end{equation}
while for $u\in \widetilde{V}_p$ and $v\in \widetilde{V}_q$ with $p,q \in [n]$ and
$p\neq q$ we have
\begin{equation}
  |\widetilde{f}_a(u)-s_{uv}\widetilde{f}_a(v)|^2=|a_p\widetilde{\tau}_p(u)-s_{uv}a_q\widetilde{\tau}_q(v)|^2\leq 2(|a_p|^2+|a_q|^2).
\end{equation}
Now the definition of the frustration index and of the boundary measure yield
\begin{align}
  \sum_{\{u,v\}\in E} B_{uv}
  \leq& \, \sum_{p\in [n]}|a_p|^2\,
				\left(2\iota^s(\widetilde{V}_p)
					  \, +\, 2\big|E(\widetilde{V}_p,\bigcup_{q\neq p}\widetilde{V}_q)\big|
					  \, +\, \left|E(\widetilde{V}_p, V^*)\right|\right)\notag\\
  \leq& \  2\, \sum_{p\in [n]}|a_p|^2\,
					\left(\iota^s(\widetilde{V}_p)
						\, +\, \left|E(\widetilde{V}_p,\widetilde{V}_p^c)\right|\right).\label{eq:Higher lower3}
\end{align}
If we now combine the estimates (\ref{eq:Higher lower1}), (\ref{eq:Higher lower2}), and (\ref{eq:Higher lower3}), we arrive at
\begin{equation}
  \lambda_n(\Delta_{\mu}^s)\leq 2\max_{p\in [n]}\phi_{\mu}^s(\widetilde{V}_p)=2h_n^s(\mu).
\end{equation}

\section{Application: Spectral clustering on oriented graphs and
  mixed graphs}\label{section:Direct graphs}

In this section, we discuss an application of the Cheeger inequalities
(and their proofs) in the case $\Gamma = S_k^1$. These results indicate
algorithms to find interesting substructures in an oriented graph or a
mixed graph.

\subsection{Generalization of Harary's balance theorem}

Let us first discuss an equivalent definition of the Cheeger constant
$h_1^s(\mu)$ if $\Gamma=S_k^1$. For a nonempty subset $\widetilde V$ of $V$,
let $\widetilde V_{0},\ldots,\widetilde V_{k-1}$ be an \emph{ordered $k$-partition}
of $\widetilde V$, that is, $\widetilde V_{i}$ are pairwise disjoint sets
and their union is $\widetilde V$. In contrast to a
nontrivial $k$-partition, all but one $\widetilde V_{i}$ may be empty. We
write $\mathscr V_k(\widetilde V)$ for an ordered $k$-partition $\widetilde V_{0},\ldots,\widetilde V_{k-1}$
of~$\widetilde V$.

Given an ordered $k$-partition $\mathscr V_k(\widetilde V)$ of $\widetilde V \subseteq V$, we define,
for $0\leq i,j \leq k-1$ and $l\in \Z$,
\begin{equation}
  |E^l(\widetilde V_i, \widetilde V_j)|
	:=\sum_{u\in \widetilde V_i}\ \sum_{\substack{v\in \widetilde V_j \text{ s.t.} \\ s_{uv}=\xi^l}}w_{uv}
\end{equation}
as the (weighted) cardinality of oriented edges with signature~$\xi^l$ that begin in~$\widetilde V_i$ and
terminate in $\widetilde V_j$.

\begin{Def} Let $G$ be a finite graph with signature $s:E^{or}\to S_k^1$.
	For any nonempty subset $\widetilde V$ of $V$, the \emph{$k$-partiteness ratio}
	of an ordered $k$-partition $\mathscr V_k(\widetilde V)$ of $\widetilde V$ is defined as
  \begin{equation}\label{eq:kpartite ratio}
    \beta^s_{\mu}\left(\mathscr V_k(\widetilde V)\right)
		= \frac{\frac{1}{2}\sum_{i,j=0}^{k-1} \sum_{l=1}^{k-1}
								|1-\xi^l| \cdot |E^{i-j+l}(\widetilde V_i, \widetilde V_j)| + |E(\widetilde V,\widetilde V^c)|}
			   {\mathrm{vol}_{\mu}(\widetilde V)}.
  \end{equation}
  The \emph{minimal $k$-partiteness ratio} $\beta_{\mu}^{s}(\widetilde V,k)$ of $\widetilde V$ is
  defined as
  \begin{equation}\label{eq:minkpartite ratio}
    \beta_{\mu}^{s}(\widetilde V,k)
		:= \min_{\mathscr V_k(\widetilde V)} \beta^s_{\mu} \left(\mathscr V_k(\widetilde V)\right),
  \end{equation}
  where the minimum is taken over all ordered $k$-partitions $\mathscr V_k(\widetilde V)$ of $\widetilde V$.
\end{Def}

The next goal is to prove that the Cheeger constant for $\Gamma = S_k^1$ can also
be expressed in terms of the k-partiteness ratio, see Corollary \ref{h-bipart} below.

\begin{Lem}\label{lemma:Cheeger constantnew}
  Let $G$ be a finite graph with signature $s:E^{or} \to S_k^1$.
  For any nonempty $\widetilde V \subseteq V$, we have
  \begin{equation}
    \phi_{\mu}^s(\widetilde V)=\beta_{\mu}^s(\widetilde V, k).
  \end{equation}
\end{Lem}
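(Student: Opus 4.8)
The plan is to reduce the claimed identity to a statement purely about numerators: both $\phi_\mu^s(\widetilde V)$ and $\beta_\mu^s(\widetilde V,k)$ are fractions with denominator $\mathrm{vol}_\mu(\widetilde V)$ and with the common summand $|E(\widetilde V,\widetilde V^c)|$ in the numerator, so it suffices to prove
\begin{equation*}
  \iota^s(\widetilde V)
  = \min_{\mathscr V_k(\widetilde V)} \frac12 \sum_{i,j=0}^{k-1}\sum_{l=1}^{k-1}
      |1-\xi^l|\cdot\bigl|E^{i-j+l}(\widetilde V_i,\widetilde V_j)\bigr|.
\end{equation*}
The right-hand side is a minimum over ordered $k$-partitions of $\widetilde V$, while $\iota^s(\widetilde V)$ is, by \eqref{eq:frustration index}, a minimum over switching functions $\tau\colon\widetilde V\to S_k^1$; the first step is therefore to match these two index sets. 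Since $S_k^1=\{\xi^0,\xi^1,\dots,\xi^{k-1}\}$, every $\tau$ yields an ordered $k$-partition by setting $\widetilde V_i:=\tau^{-1}(\xi^i)$ for $i=0,\dots,k-1$ (parts may be empty, which is permitted), and this assignment is a bijection onto the set of ordered $k$-partitions of~$\widetilde V$.

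Under this correspondence I would prove the edgewise identity
\begin{equation*}
  \sum_{\{u,v\}\in E_1} w_{uv}\,|\tau(u)-s_{uv}\tau(v)|
  = \frac12 \sum_{i,j=0}^{k-1}\sum_{l=1}^{k-1} |1-\xi^l|\cdot\bigl|E^{i-j+l}(\widetilde V_i,\widetilde V_j)\bigr|,
\end{equation*}
where $E_1$ is the edge set of the subgraph induced by $\widetilde V$; taking the minimum over $\tau$ on both sides then gives the displayed formula, and hence the lemma. To see the edgewise identity, fix an oriented edge $(u,v)$ with $u\in\widetilde V_i$, $v\in\widetilde V_j$ and $s_{uv}=\xi^m$. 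Then $|\tau(u)-s_{uv}\tau(v)|=|\xi^i-\xi^{m+j}|=|1-\xi^{m-i+j}|=|1-\xi^l|$, where $l\in\{0,1,\dots,k-1\}$ is the residue of $m-i+j$ modulo $k$, and this same edge contributes its weight $w_{uv}$ exactly to the term $|E^{i-j+l}(\widetilde V_i,\widetilde V_j)|$ on the right-hand side; the value $l=0$ is harmless because $|1-\xi^0|=0$, which is precisely why the inner sum over $l$ in \eqref{eq:kpartite ratio} may run only over $\{1,\dots,k-1\}$.

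The remaining point, and the only place where a little care is needed, is the factor $\tfrac12$ and the double counting of unoriented edges. An unoriented edge $\{u,v\}\in E_1$ splits into the two oriented edges $(u,v)$ and $(v,u)$; if $(u,v)$ is accounted for by the triple $(i,j,l)$ as above, then $(v,u)$ (with $v\in\widetilde V_j$, $u\in\widetilde V_i$, $s_{vu}=\xi^{-m}$) is accounted for by $(j,i,l')$ where $l'$ is the residue of $-l$ modulo $k$, and its contribution is $w_{uv}|1-\xi^{l'}|=w_{uv}|1-\xi^{-l}|=w_{uv}|1-\xi^l|$, since $1-\xi^{-l}=\xi^{-l}(\xi^l-1)$ gives $|1-\xi^{-l}|=|1-\xi^l|$. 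Thus each unoriented edge of $E_1$ is counted twice on the right-hand side, each time with weight $w_{uv}|\tau(u)-s_{uv}\tau(v)|$, and the $\tfrac12$ exactly compensates. Everything else is bookkeeping of finite sums, so no further subtleties arise.
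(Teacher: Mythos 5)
Your proposal is correct and follows essentially the same route as the paper: reduce to the numerators, match switching functions $\tau\colon\widetilde V\to S_k^1$ with ordered $k$-partitions via $\widetilde V_i=\tau^{-1}(\xi^i)$, and verify the edgewise identity before minimizing. The paper states the edgewise identity with a terse ``we can check that,'' whereas you spell out the verification (including the double count over $E^{or}$ that the factor $\tfrac12$ absorbs and the harmlessness of $l=0$), but the underlying argument is identical.
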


\begin{proof}
  For any function $\tau: V_1\to S_k^1$, we have a natural $k$-partition $\mathscr V_k(\widetilde V)$
  of $\widetilde V$ given by
  \begin{equation}\label{eq:kpartition}
    \widetilde V_i := \{ u\in \widetilde V \mid \tau(u)=\xi^i\}
  \end{equation}
  for $i=0,1,\ldots,k-1$. We can check that
  \begin{equation}
    \sum_{\{u,v\}\in \widetilde E}
		w_{uv} |\tau(u)-s_{uv}\tau(v)|
	= \frac{1}{2} \sum_{i,j=0}^{k-1} \sum_{l=1}^{k-1} |1-\xi^l| \cdot |E^{i-j+l}(\widetilde V_i, \widetilde V_j)|.
  \end{equation}
  Observe that the correspondence between the set of $S_k^1$-valued
  functions on $\widetilde V$ and the set of ordered $k$-partitions of $\widetilde V$
  given by (\ref{eq:kpartition}) is one-to-one. Hence, we obtain by definition of the
  frustration index
  \begin{equation}
    \iota^s(\widetilde V)
		= \min_{\mathscr V_k(\widetilde V)}\
			\frac{1}{2} \sum_{i,j=0}^{k-1} \sum_{l=1}^{k-1}
							|1-\xi^l|\cdot|E^{i-j+l}(\widetilde V_i, \widetilde V_j)|.
  \end{equation}
  This proves the lemma.
\end{proof}

\begin{Cor} \label{h-bipart}
  Let $G$ be a finite graph with signature $s: E^{or} \to S_k^1$. Then
  \begin{equation}\label{eq:Cheeger constantnew}
    h_1^s(\mu)=\min_{\emptyset\neq \widetilde V \subseteq V}\beta_{\mu}^s(\widetilde V, k).
  \end{equation}
\end{Cor}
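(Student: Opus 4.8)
The plan is to combine Lemma~\ref{lemma:Cheeger constantnew} with the definition of the Cheeger constant~$h_1^s(\mu)$ from Definition~\ref{def:Cheeger consant discrete}. Recall that $h_1^s(\mu) = \min_{\emptyset \neq V_1 \subseteq V} \phi_\mu^s(V_1)$, and Lemma~\ref{lemma:Cheeger constantnew} tells us that for each nonempty $\widetilde V \subseteq V$ we have the pointwise identity $\phi_\mu^s(\widetilde V) = \beta_\mu^s(\widetilde V, k)$. Taking the minimum of both sides over all nonempty subsets $\widetilde V$ of $V$ immediately yields
\begin{equation*}
  h_1^s(\mu) = \min_{\emptyset \neq \widetilde V \subseteq V} \phi_\mu^s(\widetilde V) = \min_{\emptyset \neq \widetilde V \subseteq V} \beta_\mu^s(\widetilde V, k),
\end{equation*}
which is exactly~(\ref{eq:Cheeger constantnew}).

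Since Lemma~\ref{lemma:Cheeger constantnew} has already been established, there is essentially no obstacle: the corollary is a one-line consequence of substituting the lemma's identity into the definition of~$h_1^s(\mu)$. The only point worth making explicit is that the minimum over nonempty subsets on the left-hand side of the Cheeger constant definition is the same index set as the minimum appearing on the right-hand side of~(\ref{eq:Cheeger constantnew}), so no reindexing or additional argument is needed; one simply observes that equality of two functions on a common domain implies equality of their minima over that domain.

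If a fully self-contained statement is desired, one could alternatively unfold the definition of $\beta_\mu^s(\widetilde V, k)$ via~(\ref{eq:minkpartite ratio}) and note that $\min_{\emptyset \neq \widetilde V} \min_{\mathscr V_k(\widetilde V)} \beta_\mu^s(\mathscr V_k(\widetilde V))$ can be viewed as a single minimization over all pairs consisting of a nonempty subset together with one of its ordered $k$-partitions; but this is not necessary, as the compact form above suffices. I expect the proof to occupy no more than two or three lines in the final text.
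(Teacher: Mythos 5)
Your proof is correct and takes exactly the route the paper intends: the paper gives no separate argument for the corollary because it is precisely the observation you make, namely that the pointwise identity $\phi_\mu^s(\widetilde V)=\beta_\mu^s(\widetilde V,k)$ from Lemma~\ref{lemma:Cheeger constantnew} turns the minimum in Definition~\ref{def:Cheeger consant discrete} into the right-hand side of~(\ref{eq:Cheeger constantnew}). Nothing is missing.
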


This enables us to prove the following structural balance theorem.

\begin{Thm}\label{thm:balanceThm}
  Let $G$ be a finite connected graph with a signature $s: E^{or} \to
  S_k^1$. Then the following statements are equivalent:
  \begin{itemize}
  \item[(i)] The signature $s$ is balanced.
  \item[(ii)] There exists an ordered $k$-partition $V_0, \ldots, V_{k-1}$ of $V$
	such that all edges that begin in~$V_i$ and terminate in $V_j$ have
	signature $\xi^{i-j}$ for all $0\leq i,j\leq k-1$.
  \end{itemize}
\end{Thm}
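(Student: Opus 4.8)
The plan is to deduce the equivalence directly from the characterization of balance via the frustration index, i.e.\ from equation~(\ref{eq:frustrationproperty}) and Corollary~\ref{h-bipart}, rather than re-running a switching argument from scratch. First I would observe that, since $G$ is connected, $s$ is balanced if and only if $\iota^s(V)=0$, by~(\ref{eq:frustrationproperty}). By Lemma~\ref{lemma:Cheeger constantnew} (applied with $\widetilde V = V$, ignoring the boundary term which vanishes), this is equivalent to $\beta_\mu^s(V,k)=0$, hence to the existence of an ordered $k$-partition $\mathscr V_k(V)$ of $V$ whose $k$-partiteness ``interior'' cost $\tfrac12\sum_{i,j}\sum_{l=1}^{k-1}|1-\xi^l|\,|E^{i-j+l}(V_i,V_j)|$ is zero.

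Next I would unpack what it means for that sum to vanish. Each summand is a product of the strictly positive weight $|1-\xi^l|$ (for $l\in[1,k-1]$) with the nonnegative quantity $|E^{i-j+l}(V_i,V_j)|$; since all weights $w_{uv}$ are positive, the whole sum is zero if and only if $|E^{i-j+l}(V_i,V_j)|=0$ for every $0\le i,j\le k-1$ and every $l\in\{1,\dots,k-1\}$. Reading off the definition of $|E^{i-j+l}(V_i,V_j)|$, this says precisely that there is no oriented edge $(u,v)$ with $u\in V_i$, $v\in V_j$ and signature $s_{uv}=\xi^{-(i-j+l)} = \xi^{j-i-l}$ for any $l\not\equiv 0 \bmod k$ — equivalently, every oriented edge from $V_i$ to $V_j$ must have signature $\xi^{j-i}$. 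Here I should be a little careful with the sign/index bookkeeping: the exponents live in $\Z/k\Z$, and I want to match the normalization in the definition of $|E^l(\widetilde V_i,\widetilde V_j)|$ (where $s_{uv}=\xi^l$) against the exponent $i-j+l$ appearing in~(\ref{eq:kpartite ratio}); after the substitution $l' = i-j+l$ the condition $l\neq 0$ becomes $l'\neq i-j$, so the surviving constraint is exactly ``$s_{uv}=\xi^{i-j}$ whenever $u\in V_i$, $v\in V_j$''. This is statement~(ii) (up to the harmless convention of whether one writes $\xi^{i-j}$ or $\xi^{j-i}$, which only reflects the orientation convention $e=(u,v)$ vs.\ $\bar e=(v,u)$ together with~(\ref{eq:signatureKEY})), so (i)$\Rightarrow$(ii) and (ii)$\Rightarrow$(i) both follow.

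For the converse direction one can also argue more conceptually, and I would mention this as an alternative: given an ordered $k$-partition as in~(ii), define $\tau:V\to S_k^1$ by $\tau(u):=\xi^{i}$ for $u\in V_i$; then for any edge $\{u,v\}$ with $u\in V_i$, $v\in V_j$ one computes $\tau(u)^{-1}s_{uv}\tau(v)=\xi^{-i}\xi^{i-j}\xi^{j}=1$, so $s^\tau\equiv s_1$ is the trivial signature, whence $s$ is switching equivalent to $s_1$ and therefore balanced by Proposition~\ref{Pro:switching lemma}; conversely a balanced $s$ is switched to $s_1$ by some $\tau$, and the level sets $V_i:=\{u\mid \tau(u)=\xi^i\}$ give the desired partition. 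The main obstacle is purely one of indexing discipline: keeping the exponents consistent modulo $k$, matching the definition of $|E^l(\widetilde V_i,\widetilde V_j)|$ with the shifted exponent $i-j+l$ in~(\ref{eq:kpartite ratio}), and fixing once and for all the orientation convention so that~(ii) comes out with the exponent $\xi^{i-j}$ rather than its inverse; there is no analytic content beyond Lemma~\ref{lemma:Cheeger constantnew} and positivity of the weights and of $|1-\xi^l|$.
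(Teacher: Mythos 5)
Your argument is essentially the paper's own proof, just unfolded a step further: the paper invokes the characterization $h_1^s(\mu)=0 \Leftrightarrow s$ balanced together with Corollary~\ref{h-bipart}, whereas you go directly through $\iota^s(V)=0$ and Lemma~\ref{lemma:Cheeger constantnew} with $\widetilde V=V$ and then use positivity of the weights $w_{uv}$ and of $|1-\xi^l|$ --- the same content, and your route has the small advantage of bypassing the Cheeger constant and the (implicit) step that the minimizing $\widetilde V$ must be all of $V$ on a connected graph. Two small slips worth noting, neither of which affects the conclusion: (a) when you read off the vanishing of $|E^{i-j+l}(V_i,V_j)|$ you first write that it excludes $s_{uv}=\xi^{-(i-j+l)}$ and conclude ``signature $\xi^{j-i}$,'' which has the sign backwards (by the paper's definition $|E^m(V_i,V_j)|$ counts oriented edges from $V_i$ to $V_j$ with $s_{uv}=\xi^{m}$, so what is excluded is $s_{uv}=\xi^{i-j+l}$ for $l\not\equiv 0$), though you do correctly land on $s_{uv}=\xi^{i-j}$ in the next sentence; (b) with the paper's Definition~\ref{def:switching}, $s^\tau(e)=\tau(u)\,s_{uv}\,\tau(v)^{-1}$, so your computation $\tau(u)^{-1}s_{uv}\tau(v)=1$ with $\tau(u)=\xi^i$ shows $s^{\tau^{-1}}\equiv s_1$ rather than $s^{\tau}\equiv s_1$ (or, equivalently, take $\tau(u):=\xi^{-i}$ on $V_i$); either way $s$ is switching equivalent to $s_1$ and the desired conclusion via Proposition~\ref{Pro:switching lemma} holds.
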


\begin{proof}
  Recall that $h_1^s(\mu)=0$ if and only if the signature is
  balanced. The theorem is then a direct consequence of
  (\ref{eq:Cheeger constantnew}).
\end{proof}

\begin{Rmk}
  Harary's balance theorem \cite{Harary53} states that a signature
  $s:E^{or}\to \{\pm1\}$ is balanced if and only if there exists a
  bipartition $V_0, V_1$ of $V$ such that an edge has signature $-1$
  if and only if it has one end point in $V_0$ and one in
  $V_1$. Theorem \ref{thm:balanceThm} is a natural generalization of
  Harary's theorem.
\end{Rmk}

\begin{figure}[b]
\hspace{0.05\linewidth}
\begin{minipage}[t]{0.4\linewidth}
\centering
\includegraphics[width=0.9\textwidth]{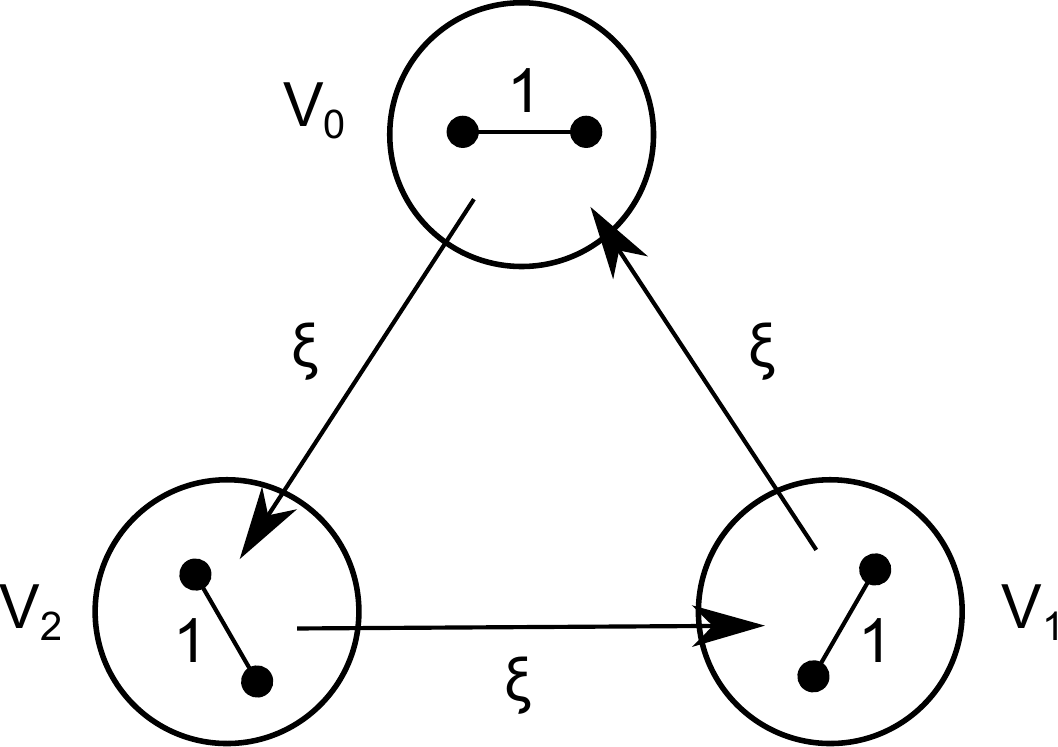}
\end{minipage}
\hfill
\begin{minipage}[t]{0.4\linewidth}
\centering
\includegraphics[width=0.9\textwidth]{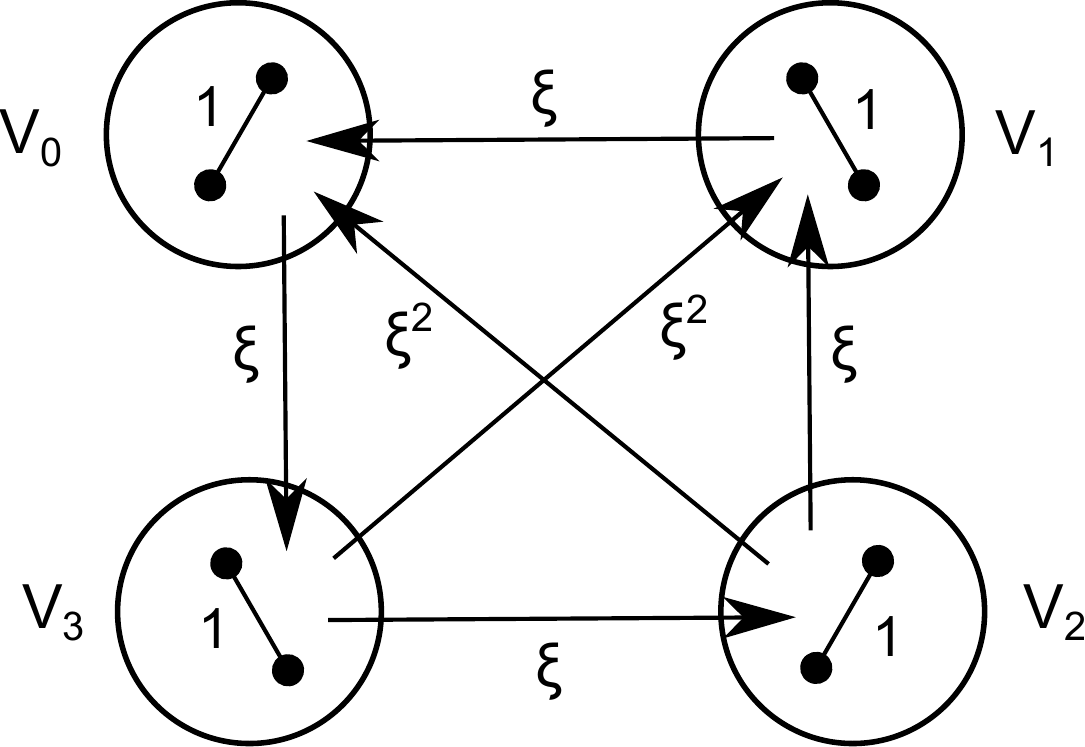}
\end{minipage}
\hspace{0.1\linewidth}
\caption{Schematic illustration of Theorem~\ref{thm:balanceThm} for $k = 3$ (left)
and $k= 4$ (right).}\label{fig:balanced}
\end{figure}

In Figure~\ref{fig:balanced}, we schematically illustrate the situation of
Theorem~\ref{thm:balanceThm} if $k\in\{ 3, 4\}$. The class of edges that
begin and terminate in ~$V_i$ are represented by one unoriented edge labeled
by~$\xi^0=1$. For distinct $i,j$, the class of
edges with endpoints in $V_i$ and $V_j$ are represented by an oriented edge
that begins in $V_i$ and terminates in~$V_j$ with~$i<j$. These oriented edges
 are labeled by $\xi^{i-j}$.

\subsection{Finding a good substructure}\label{subsec:finding_substructure}

The proof of Cheeger's inequality in Section~\ref{section:Cheeger inequality},
especially Lemma \ref{lemma:clusteringI}, actually indicates an algorithm to
find a subset $\widetilde V \subseteq V$ with a constant $\phi_{\mu}^s(\widetilde V)$
close to the Cheeger constant $h_1^s(\mu)$ of~$G$. In other words,  $\phi_{\mu}^s(\widetilde V)$
is not larger than the upper bound for $h_1^s(\mu)$ given in Cheeger's inequality
(Theorem \ref{thm:CheegerInequality}): for every nonzero function $f: V \to {\mathbb C}$,
Lemma \ref{lemma:clusteringI} provides a nonempty subset $\widetilde V := V^f(\sqrt{t'}) \subseteq V$
satisfying \eqref{eq:phisR}. If we choose $f$ to be the eigenfunction corresponding
to $\lambda_1(\Delta_\mu^s)$, we see that $\widetilde V$ is a nonempty subset of~$V$
with the required property.

Now consider a finite graph $G$ with a $k$-cyclic signature $s$. From Lemma~\ref{lemma:Cheeger constantnew},
we know that $\phi_\mu^s(\widetilde V)$ agrees with
the minimum of the $k$-partiteness ratios of all ordered $k$-partitions~$\mathscr V_k(\widetilde V)$.
Having found a nonempty subset $\widetilde V := V^f(\sqrt{t'}) \subseteq V$
satisfying \eqref{eq:phisR}, we explain in this subsection, how to find
a finer substructure of $\widetilde V$, namely an ordered $k$-partition $\mathscr V_k(\widetilde V)$
with a $k$-partiteness ratio that is at most the upper bound given in \eqref{eq:phisR}.
The precise statement is given in Proposition \ref{prop:clusteringImodify} below.

Recall the notation $Q_{j}^{\theta}$ and $V^f(t)$ of (\ref{eq:notationQ}) and
(\ref{eq:notationVft}), respectively. Given $t\in [0,1]$ and $\theta\in [0,2\pi)$,
we define an ordered $k$-partition $\mathscr V_k(V^f(\sqrt{t},\theta))$ of $V^f(\sqrt{t},\theta)\subseteq V$ by
\begin{equation}\label{eq:kpartite algorithm}
  V_j^f(\sqrt{t}, \theta):=\{u\in V \mid \sqrt{t} \leq |f(u)| \text{ and } f(u)\in Q_{j}^{\theta}\}
\end{equation}
for $0\leq j \leq k-1$ and modify Lemma \ref{lemma:clusteringI} into the following result.

\begin{Pro}\label{prop:clusteringImodify}
  Let $s: E^{or}\to S_k^1$ be a signature of~$G$. For any nonzero function
  $f:V\to \mathbb{C}$ with $\max_{u\in V}|f(u)|=1$, there exist
  $t'\in [0, 1]$ and $\theta'\in [0,2\pi)$ such that
  \begin{equation}
    \beta^s_{\mu}\left(\mathscr V_k(V^f(\sqrt{t'},\theta')\right) \leq 2\sqrt{2d_{\mu}\mathcal{R}_{\mu}^s(f)}.
  \end{equation}
\end{Pro}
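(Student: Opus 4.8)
The plan is to mimic the proof of Lemma \ref{lemma:clusteringI}, but now tracking the \emph{ordered $k$-partitions} induced by the functions $Y_{\sqrt{t},\theta}$ rather than just the superlevel sets $V^f(\sqrt t)$. The crucial observation is that $Y_{\sqrt t,\theta}$ does not merely cut $V$ into $V^f(\sqrt t)$ and its complement: on $V^f(\sqrt t)$ it records, via the sector $Q_j^\theta$ containing $f(u)$, a value $\xi^{j}$, which is precisely the data of a switching function $\tau=\tau_{t,\theta}\colon V^f(\sqrt t)\to S_k^1$, and hence of the ordered $k$-partition $\mathscr V_k(V^f(\sqrt t,\theta))$ defined in \eqref{eq:kpartite algorithm} through the correspondence \eqref{eq:kpartition}. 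So I would first restate Lemma \ref{lemma:key} and its consequence Lemma \ref{lemma:coarea} in this sharper form: for the signature $s\colon E^{or}\to S_k^1$ and $\max_{u\in V}|f(u)|=1$,
\begin{equation*}
  \frac{1}{2\pi}\int_0^{2\pi}\!\!\int_0^1 \sum_{\{u,v\}\in E} w_{uv}\bigl|Y_{\sqrt t,\theta}(f(u))-s_{uv}Y_{\sqrt t,\theta}(f(v))\bigr|\,dt\,d\theta
  \le 2\sum_{\{u,v\}\in E} w_{uv}\,|f(u)-s_{uv}f(v)|\,(|f(u)|+|f(v)|),
\end{equation*}
which is exactly what Lemma \ref{lemma:key} gives after summing over edges and using $s_{uv}Y_{\sqrt t,\theta}(f(v))=Y_{\sqrt t,\theta}(s_{uv}f(v))$.

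Next I would identify the left-hand integrand, for fixed $(t,\theta)$, with the numerator of $\beta^s_\mu$ evaluated on the ordered $k$-partition $\mathscr V_k(V^f(\sqrt t,\theta))$. Indeed, splitting the edge sum into edges with both endpoints in $V^f(\sqrt t,\theta)$ and edges crossing to the complement: the crossing edges contribute exactly $|E(V^f(\sqrt t,\theta),(V^f(\sqrt t,\theta))^c)|$ (here one endpoint of $Y$ is $0$, so $|Y(f(u))-s_{uv}Y(f(v))|=1$), while for an interior edge with $f(u)\in Q_i^\theta$, $f(v)\in Q_j^\theta$ the contribution is $w_{uv}|\xi^{i}-s_{uv}\xi^{j}|=w_{uv}|1-\xi^{-i}s_{uv}\xi^{j}|$, which matched against the index bookkeeping in \eqref{eq:kpartite ratio} (using $s_{uv}=\xi^{l}$ and the class $|E^{i-j+l}|$) reassembles precisely $\tfrac12\sum_{i,j}\sum_{l=1}^{k-1}|1-\xi^l|\,|E^{i-j+l}(V^f_i,V^f_j)|$. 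This is the same identity already used inside the proof of Lemma \ref{lemma:Cheeger constantnew}, just applied with $\tau=\tau_{t,\theta}$. Hence the integrand equals $\bigl(\text{numerator of }\beta^s_\mu(\mathscr V_k(V^f(\sqrt t,\theta)))\bigr)$.

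With that, I would run the averaging argument: set
\begin{equation*}
  I:=\frac{\frac{1}{2\pi}\int_0^{2\pi}\!\!\int_0^1\Bigl(\text{numerator of }\beta^s_\mu(\mathscr V_k(V^f(\sqrt t,\theta)))\Bigr)\,dt\,d\theta}{\int_0^1 \operatorname{vol}_\mu(V^f(\sqrt t))\,dt},
\end{equation*}
noting the denominator is $\theta$-independent and, by \eqref{eq:area formula}, equals $\sum_{u\in V}|f(u)|^2\mu(u)$. The numerator estimate above together with Cauchy--Schwarz and $\sum_{\{u,v\}\in E}w_{uv}(|f(u)|+|f(v)|)^2\le 2\sum_u d_u|f(u)|^2$ gives, exactly as in the proof of Lemma \ref{lemma:clusteringI}, that $I\le 2\sqrt{2d_\mu \mathcal R^s_\mu(f)}$. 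On the other hand, $I$ is an average of the ratios $\beta^s_\mu(\mathscr V_k(V^f(\sqrt t,\theta)))$ weighted by $\operatorname{vol}_\mu(V^f(\sqrt t))$ (for the $\theta$ inside, one first fixes $t$ and averages over $\theta$, observing $\operatorname{vol}_\mu(V^f(\sqrt t,\theta))=\operatorname{vol}_\mu(V^f(\sqrt t))$ is independent of $\theta$), so by the mean-value principle there exist $t'\in[0,1]$ and $\theta'\in[0,2\pi)$ with $\beta^s_\mu(\mathscr V_k(V^f(\sqrt{t'},\theta')))\le I$. Combining the two bounds yields the claim. The one point requiring a little care — and the main obstacle — is the bookkeeping in the second step: making sure the signed index $i-j+l$ in \eqref{eq:kpartite ratio} is exactly what falls out of $|\xi^i-\xi^l\xi^j|$ when one groups interior edges by the sectors of their endpoints and by $s_{uv}$, so that the integrand is genuinely the numerator of $\beta^s_\mu$ and not merely bounded by it; but this is the identity already verified in Lemma \ref{lemma:Cheeger constantnew}, now used pointwise in $(t,\theta)$, so no new idea is needed beyond careful notation.
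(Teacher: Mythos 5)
Your proposal is correct and follows essentially the same route as the paper's proof: the paper also replaces the coarea \emph{inequality} of Lemma~\ref{lemma:coarea} by the pointwise \emph{equality} identifying $\sum_{\{u,v\}\in E} w_{uv}|Y_{\sqrt t,\theta}(f(u))-s_{uv}Y_{\sqrt t,\theta}(f(v))|$ with the numerator of $\beta^s_\mu\bigl(\mathscr V_k(V^f(\sqrt t,\theta))\bigr)$, then invokes Lemma~\ref{lemma:key} and the averaging argument from Lemma~\ref{lemma:clusteringI}. Your extra care with the $i-j+l$ bookkeeping and the $\theta$-independence of $\operatorname{vol}_\mu(V^f(\sqrt t,\theta))$ is exactly what the paper leaves implicit by pointing back to Lemma~\ref{lemma:Cheeger constantnew}.
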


\begin{proof}
  Instead of inequality (\ref{eq:estimatemodify}), we consider the equality
  \begin{align}
    \frac{1}{2\pi}	\int_0^{2\pi}& \int_0^1
						\sum_{\{u,v\}\in E}
							w_{uv}\left|Y_{\sqrt{t}, \theta}(f(u))-s_{uv}Y_{\sqrt{t}, \theta}(f(v))\right|
					\, dt\, d\theta\notag\\
    = &\frac{1}{2\pi} \int_0^{2\pi} \int_0^1
							\left( \frac{1}{2}
										\sum_{i,j=0}^{k-1}
										\sum_{l=1}^{k-1}
											\left|
											1-\xi^l\right|\cdot\left|E^{i-j+l}(W_i, W_j)
											\right|
									+ \left|
										E( \widetilde V, \widetilde V^c)
									  \right|
							\right)\, dt\, d\theta.\notag
  \end{align}
  where $W_j := V_j^f(\sqrt{t}, \theta)$ and $\widetilde V := V^f(\sqrt{t})$.
%
  The remaining proof follows along similar arguments as the ones
  given in the proof of Lemma \ref{lemma:clusteringI}.
\end{proof}

This Proposition provides the following spectral clustering
algorithm to find an ordered $k$-subpartition of $V$ with a
$k$-partiteness ratio bounded above by the upper bound in Cheeger's
inequality. Firstly, find the eigenfunction $f_1: V\to \mathbb{C}$
corresponding to $\lambda_1(\Delta_{\mu}^s)$. For convenience, we can
normalize $f_1$ such that $\max_{u\in V}|f(u)|=1$. Secondly, find the
required ordered $k$-subpartion from the sets (\ref{eq:kpartite
  algorithm}) by running over fine enough discretizations of the
parameters $t$ and $\theta$.

\subsection{Applications to partially oriented graphs}\label{subsec:applications}

In this subsection, we consider \emph{mixed graphs} instead of undirected graphs
which are studied in scheduling problems, for example~\cite{Sotskov00,Ries07}.
Recall that a \emph{mixed graph} is a graph $G=(V, E_U\cup E_O)$ that consists
of unoriented edges (the set $E_U$) as well as oriented edges (the set $E_O$)
such that no two vertices $u,v \in V$ form more than one edge of $E_U \cup E_O$.
As mentioned in the introduction, we call such a graph also {\em partially oriented}.
Clearly, a partially oriented graph is an \emph{oriented graph} if and only if
$E_U=\emptyset$. The algorithm discussed in the previous subsection has interesting
applications for partially oriented graphs.

Given a partially oriented graph $G=(V, E_U\cup E_O)$ and a natural number~$k$, we
now want to find a nonempty subset $\widetilde V \subseteq  V$ and an ordered $k$-subpartition
$\mathscr{V}_k(\widetilde V)=\{V_0, V_1, \ldots, V_{k-1}\}$ of~$\widetilde V$ which
approximates the following ideal substructure:
\begin{itemize}
\item[(i)] The subset $\widetilde V$ has empty boundary.
\item[(ii)] An edge $e\in E_U \cup E_O$ with endpoints $u,v\in V_i$ for some~$0\leq i\leq k-1$
	is unoriented, that is, $e\in E_U$.
\item[(iii)] The partially oriented subgraph~$G_{\widetilde V}$ induced by~$\widetilde V$
	has the following \emph{cyclic property}: the only oriented edges of $G_{\widetilde V}$
	begin in $V_i$ and end in $V_{i-1}$ for some $0\leq i\leq k-1$ where we identify $V_{-1}$ and $V_{k-1}$.
\end{itemize}
Such ideal substructures are schematically illustrated in Figure~\ref{fig:balance_mixed}
for $k=3$ and $k=4$.

Our approach to this problem is to construct an unoriented graph $G =(V,E)$ with a
$k$-cyclic signature $s$ from a given partially oriented graph $G=(V,E_U \cup E_O)$.
More precisely, we consider the new edge set $E:=E_U\cup E_O$ where the orientations
in $E_O$ are dropped and define a signature $s: E^{or}\to S_k^1$ by assigning to every
edge $\{u,v\}\in E$ the value
\begin{equation}
s_{uv}:=\begin{cases}
           1, 		& \text{if $\{u,v\}\in E_U$;} \\
           \xi, 	& \text{if $(u,v)\in E_O$;}\\
		   \xi^{-1}	& \text{if $(v,u)\in E_O$.}
         \end{cases}
\end{equation}
This construction to transform a connected partially oriented graph $G$ is set up in such a way that the signature
is balanced if and only if $G$ has the above ideal structure.
Using the eigenfunction of the eigenvalue $\lambda_1(\Delta_{\mu}^s)$, we
apply the spectral clustering algorithm discussed in the Section~\ref{subsec:finding_substructure}
to find a $k$-subpartition $\mathscr V_k(\widetilde V)$ of
some $\widetilde V \subseteq V$ with $k$-partiteness ratio $\beta_\mu^s(\mathscr V_k(\widetilde V))$
at most the upper bound given in Cheeger's inequality. Note that the
$k$-partiteness ratio can be viewed as a measure to quantify the quality
of an approximation to the ideal case which is achieved if and only if
$\beta_\mu^s(\mathscr V_k(\widetilde V)) = 0$. By Corollary~\ref{h-bipart},
the $k$-partiteness ratio $\beta_\mu^s(\mathscr V_k(\widetilde V))$ is bounded
from below by the Cheeger constant $h_1^s(\mu)$.
\begin{figure}
\hspace{0.05\linewidth}
\begin{minipage}[t]{0.4\linewidth}
\centering
\includegraphics[width=0.9\textwidth]{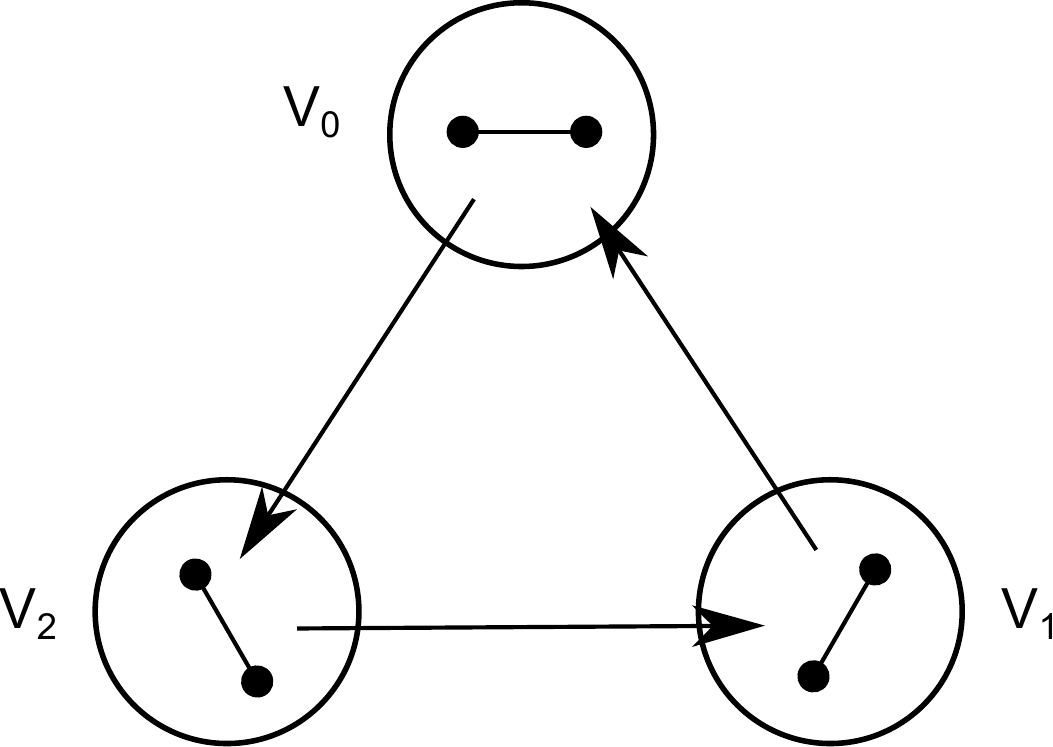}

\end{minipage}
\hfill
\begin{minipage}[t]{0.4\linewidth}
\centering
\includegraphics[width=0.9\textwidth]{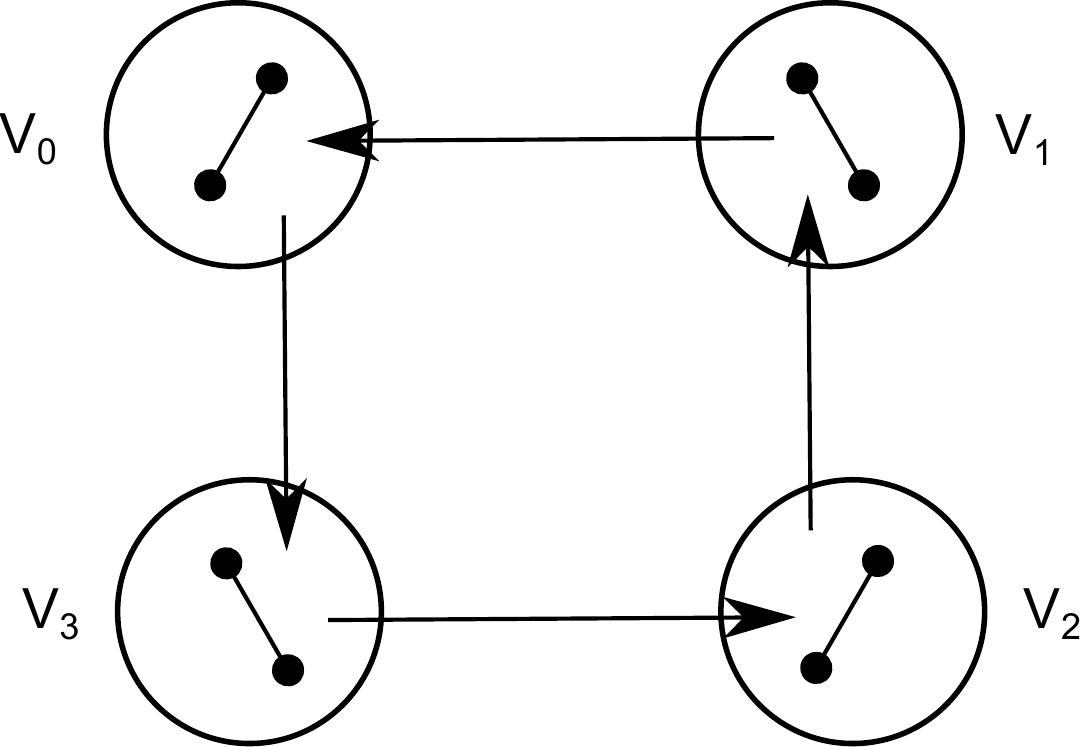}

\end{minipage}
\hspace{0.1\linewidth}
\caption{Ideal substructures of partially oriented graphs in case of $k=3$ (left)
$k=4$ (right) that are approximated in Section~\ref{subsec:applications}.}
\label{fig:balance_mixed}
\end{figure}

We remark that in the special situation were we start with an oriented graph, the ordered $k$-subpartition
$V_0, V_1, \ldots, V_{k-1}$ of~$V$ approximates
an ideal substructure with no edges having both endpoints in $V_i$ for some $0\leq i\leq k-1$.

These considerations can clearly be extended to obtain multi-way
spectral clustering algorithms. Combining the method here with the spectral clustering
via metrics on lens spaces in
Section \ref{section:higher Cheeger}, we can find $n$ subgraphs where each subgraph
defines a sparse cut and approximates an ideal substructure as
described above.


\section{Magnetic Laplacians on Riemannian
  manifolds}\label{section:manifold case}

In this section, we transfer the ideas related to Cheeger constants
and Cheeger inequalities from discrete magnetic Laplacians to the
Riemannian setting.

Let $M$ be a closed connected Riemannian manifold. We consider a real
smooth $1$-form $\boldsymbol{\alpha}$ and the corresponding
\emph{magnetic Laplacian} $\Delta^{\boldsymbol{\alpha}}$ on $M$,
defined as
\begin{equation}\label{eq:quadratic form}
  \Delta^{\boldsymbol{\alpha}}=D^*D,
\end{equation}
where the operator $D:=d+i\boldsymbol{\alpha}$ maps smooth complex
valued functions to smooth complex valued $1$-forms and $D^*$ is the
formal adjoint of $D$ w.r.t. the $L^2$ inner product of functions and
$1$-forms:
\begin{equation}
  \int_M\langle Df, \boldsymbol{\eta}\rangle dx=\int_Mf\overline{D^*\boldsymbol{\eta}}dx.
\end{equation}
The $1$-form $\boldsymbol{\alpha}$ is called the \emph{magnetic
  potential}. One can check that for any smooth function $f: M\to
\mathbb{C}$,
\begin{equation}
  \Delta^{\boldsymbol{\alpha}}f:=\Delta f-2i\langle df, \boldsymbol{\alpha}\rangle+\left( i d^*\boldsymbol{\alpha} + |\boldsymbol{\alpha}|^2\right)f,
\end{equation}
where $d$ is the exterior differential, $d^*$ its formal adjoint,
$\Delta:=d^*d$ is the Laplace-Beltrami operator, $\langle\cdot,
\cdot\rangle$ the Hermitian inner product in the cotangent bundle
$T^*M$ induced by the Riemannian metric, and
$|\boldsymbol{\alpha}|^2:=\langle\boldsymbol{\alpha},\boldsymbol{\alpha}\rangle$.

We recall some basic spectral properties of the magnetic Laplacian
from \cite{Shigekawa87} (see also \cite[Section 4]{Paternain01}).  The
operator $\Delta^{\boldsymbol{\alpha}}$ is essentially self-adjoint as
an operator defined on smooth complex valued functions (with compact
support). Its self-adjoint extension is defined on a dense subset of
the Hilbert space $L^2(M, \mathbb{C})$ of complex valued square
integrable functions w.r.t the Riemannian measure. In the sequel, we
will use the same notation for both the essentially self-adjoint
operator and its closed self-adjoint extension. Since $M$ is compact,
$\Delta^\alpha$ has only discrete spectrum, and the eigenvalues can be
listed with multiplicity as follows (see \cite[Theorem
2.1]{Shigekawa87})
\begin{equation}
  0\leq \lambda_1(\Delta^{\boldsymbol{\alpha}})\leq \lambda_2(\Delta^{\boldsymbol{\alpha}})\leq \cdots \nearrow\infty.
\end{equation}
Due to (\ref{eq:quadratic form}), the corresponding Rayleigh quotient
of a smooth function $f:M\to \mathbb{C}$ is given by
\begin{equation}
  \mathcal{R}^{\boldsymbol{\alpha}}(f):=\frac{\int_M|(d+i\boldsymbol{\alpha})f|^2dx}{\int_M|f|^2dx}.
\end{equation}
The min-max principle (\ref{eq:minmax}) still holds in this
setting. In particular, we have
\begin{equation}
  \lambda_1(\Delta^{\boldsymbol{\alpha}})=\inf_{\substack{f\in C^{\infty}(M,\mathbb{C})\\\text{s.t. } f\not\equiv 0}} \mathcal{R}^{\boldsymbol{\alpha}}(f),
\end{equation}
where $C^{\infty}(M,\mathbb{C})$ is the set of smooth complex valued
functions.

Consider $U(1)$ as a subset $\{z\in \mathbb{C}\mid |z|=1\}$ of
$\mathbb{C}$ and denote the set of smooth maps from $M$ to $U(1)$
by $C^{\infty}(M, U(1))$. For $\tau \in C^{\infty}(M, U(1))$, we
then define by
\begin{equation}
  \boldsymbol{\alpha}_{\tau}:=\frac{d\tau}{i\tau}
\end{equation}
a smooth $1$-form. The set $\mathfrak{B}:=\{ \boldsymbol{\alpha}_{\tau}\mid \tau\in C^{\infty}(M, U(1))\}$
has the following characterization due to Shigekawa, \cite[Proposition 3.1 and Theorem 4.2]{Shigekawa87}. Since $a\boldsymbol{\alpha}_{\tau}=\boldsymbol{\alpha}_{\tau^a}$ for $a\in \mathbb{R}$ and $\boldsymbol{\alpha}_{\tau}+\boldsymbol{\alpha}_{\tau'}=\boldsymbol{\alpha}_{\tau\tau'}$, $\mathfrak{B}$ is in fact a real vector space.

\begin{Thm}[Shigekawa]\label{thm:Shigekawa}
  The following statements are equivalent:
  \begin{itemize}
  \item [(i)] $\lambda_1(\Delta^{\boldsymbol{\alpha}})=0$;
  \item [(ii)] $\boldsymbol{\alpha}\in \mathfrak{B}$;
  \item [(iii)] $d\boldsymbol{\alpha}=0$ and
    $\int_C\boldsymbol{\alpha}=0\mod 2\pi$, for any closed curve $C$
    in $M$.
  \end{itemize}
\end{Thm}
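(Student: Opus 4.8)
The plan is to prove the two implications (ii)$\Rightarrow$(i) and (i)$\Rightarrow$(ii) together with the equivalence (ii)$\Leftrightarrow$(iii), so that all three statements become equivalent. Conceptually this is the manifold counterpart of the discrete dichotomy in Corollary~\ref{cor:Cheeger constamt}, with (iii) playing the role of ``balanced'' and (ii) the role of ``gauge equivalent to the trivial potential''. The one genuinely structural observation behind the whole argument is that a function of vanishing magnetic energy has \emph{constant modulus}, which forces a harmonic section of $D$ to be (a constant multiple of) a global $U(1)$-gauge.

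First I would do (ii)$\Rightarrow$(i): if $\boldsymbol{\alpha}=\boldsymbol{\alpha}_{\tau}=d\tau/(i\tau)$ for some $\tau\in C^{\infty}(M,U(1))$, test the Rayleigh quotient with $f:=\tau^{-1}$. A direct computation gives $Df=df+i\boldsymbol{\alpha}f=-\tau^{-2}\,d\tau+i\cdot\frac{d\tau}{i\tau}\cdot\tau^{-1}=0$, so $\mathcal{R}^{\boldsymbol{\alpha}}(f)=0$, and since the spectrum is nonnegative this yields $\lambda_1(\Delta^{\boldsymbol{\alpha}})=0$. For the converse (i)$\Rightarrow$(ii), compactness of $M$ gives an eigenfunction $f\not\equiv 0$ realizing the eigenvalue $0$, hence $\int_M|Df|^2\,dx=0$ and thus $df=-i\boldsymbol{\alpha}f$ pointwise. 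Since $\boldsymbol{\alpha}$ is real-valued, $d|f|^2=(df)\overline f+f\,\overline{df}=-i\boldsymbol{\alpha}|f|^2+i\boldsymbol{\alpha}|f|^2=0$, so $|f|$ is a nonzero constant; after rescaling, $f\in C^{\infty}(M,U(1))$, and then $\tau:=f^{-1}$ satisfies $d\tau=-f^{-2}\,df=i f^{-1}\boldsymbol{\alpha}=i\tau\boldsymbol{\alpha}$, i.e. $\boldsymbol{\alpha}=d\tau/(i\tau)=\boldsymbol{\alpha}_{\tau}\in\mathfrak{B}$.

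Next, for (ii)$\Rightarrow$(iii) I would argue locally: on a simply connected chart write $\tau=e^{i\psi}$ with $\psi$ real and smooth, so $\boldsymbol{\alpha}=d\psi$ there and hence $d\boldsymbol{\alpha}=0$ globally; for a closed curve $C$, lift $\tau\circ C$ to $\widetilde\psi\colon[0,1]\to\R$ with $\tau(C(t))=e^{i\widetilde\psi(t)}$, so that $\int_C\boldsymbol{\alpha}=\widetilde\psi(1)-\widetilde\psi(0)\in 2\pi\Z$ because $e^{i\widetilde\psi(1)}=e^{i\widetilde\psi(0)}$. For (iii)$\Rightarrow$(ii), fix a basepoint $p_0\in M$ (here connectedness of $M$ is used) and set $\tau(p):=\exp\bigl(i\int_{\gamma}\boldsymbol{\alpha}\bigr)$ for any path $\gamma$ from $p_0$ to $p$; two such paths differ by a closed loop whose $\boldsymbol{\alpha}$-period lies in $2\pi\Z$ by hypothesis, so $\tau\colon M\to U(1)$ is well defined, and closedness of $\boldsymbol{\alpha}$ makes it locally of the form $e^{i(c+\psi)}$ with $\boldsymbol{\alpha}=d\psi$, hence smooth with $d\tau=i\tau\boldsymbol{\alpha}$, i.e. $\boldsymbol{\alpha}=\boldsymbol{\alpha}_{\tau}\in\mathfrak{B}$.

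The part that needs the most care is (i)$\Rightarrow$(ii): one must first upgrade $\Delta^{\boldsymbol{\alpha}}f=0$ to the pointwise identity $Df=0$ (via vanishing of $\int_M|Df|^2\,dx$), and then crucially invoke the reality of $\boldsymbol{\alpha}$ to deduce $d|f|^2=0$; without reality of $\boldsymbol{\alpha}$ the modulus need not be constant and the harmonic section need not produce a $U(1)$-gauge. Everything else is a standard monodromy/de~Rham computation; the only routine points to double-check are the signs in the gauge identities and the smoothness of $\tau$ across overlapping charts on which the local primitive $\psi$ is chosen.
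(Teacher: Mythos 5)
The paper itself does not supply a proof of this theorem: it is cited directly from Shigekawa's article (\cite[Proposition 3.1 and Theorem 4.2]{Shigekawa87}). Your reconstruction is correct and follows the standard argument one finds there and in the surrounding literature: the implication (ii)$\Rightarrow$(i) by exhibiting $\tau^{-1}$ as a zero mode (equivalently, by the gauge conjugation $\bar\tau\Delta^{\boldsymbol{\alpha}_\tau}\tau=\Delta$ applied to the constant function), and the converse (i)$\Rightarrow$(ii) by upgrading $\Delta^{\boldsymbol{\alpha}}f=0$ to $Df=0$ via $\int_M|Df|^2=0$, then using the reality of $\boldsymbol{\alpha}$ to get $d|f|^2=0$, so that after normalization $f\in C^{\infty}(M,U(1))$ and $\tau=f^{-1}$ gauges $\boldsymbol{\alpha}$ away. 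Your (ii)$\Leftrightarrow$(iii) is the usual period/monodromy computation: $d\boldsymbol{\alpha}=0$ follows locally from $\boldsymbol{\alpha}=d\psi$, the integer periods come from lifting $\tau$ along a loop, and conversely condition (iii) makes $\tau(p)=\exp(i\int_{p_0}^{p}\boldsymbol{\alpha})$ well defined on the connected manifold and smooth by Poincar\'e's lemma.

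Two minor points worth being explicit about, both routine. First, in (i)$\Rightarrow$(ii) one should invoke elliptic regularity so that the zero eigenfunction $f$ is genuinely smooth before differentiating $|f|^2$; you implicitly assume this. Second, the step $d|f|^2=0$ uses not only that $\boldsymbol{\alpha}$ is real but also that $f\bar f=|f|^2$ is real so the two terms cancel; you state this correctly but it is the crux, since without a real potential $|f|$ need not be constant and the whole reduction to a $U(1)$-valued $\tau$ breaks down. Your proof identifies this as the key structural observation, which is exactly right.
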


This result can be compared with Corollary \ref{cor:Cheeger constamt}: the set $\mathfrak{B}$
is comparable to the set of balanced signatures in the discrete setting. Locally, we can find a
smooth real-valued function $\theta$ such that $\tau=e^{i\theta}$ and $\boldsymbol{\alpha}_{\tau}=d\theta$.

In the discrete setting, Laplacians~$\Delta_\mu^s$ with switching equivalent signatures are unitarily equivalent
by (\ref{unitary_equivalence_of_laplace}) while magnetic Laplacians $\Delta^{\boldsymbol{\alpha}}$
are unitarily equivalent under \emph{gauge transformations} in the smooth setting. Recall that a
gauge transformation
\begin{equation}\label{eq:gaugeTrans}
 \boldsymbol{\alpha}\mapsto \boldsymbol{\alpha}+\boldsymbol{\alpha}_{\tau}
\end{equation}
is associated to any $\tau\in C^{\infty}(M, U(1))$. We have (\cite[Proposition 3.2]{Shigekawa87})
\begin{equation}
  \overline{\tau}\Delta^{\boldsymbol{\alpha}}\tau=\Delta^{\boldsymbol{\alpha}+\boldsymbol{\alpha}_{\tau}}.
\end{equation}
In particular, if $\boldsymbol{\alpha}\in \mathfrak{B}$,
then $\Delta^{\boldsymbol{\alpha}}$ is unitarily equivalent to $\Delta$. In
other words, $\mathfrak{B}$ is the set of magnetic potentials which
``can be gauged away''.

\begin{Def}\label{def:frustration manifold}
  Let $\boldsymbol{\alpha}$ be a magnetic potential on $M$. For any
  nonempty Borel subset $\Omega\subseteq M$, the \emph{frustration
    index} $\iota^{\boldsymbol{\alpha}}(\Omega)$ of $\Omega$ is
  defined as
  \begin{equation}
    \iota^{\boldsymbol{\alpha}}(\Omega) = \inf_{\tau\in C^{\infty}(\Omega, U(1))}\int_{\Omega}|(d+i\boldsymbol{\alpha})\tau|dx = \inf_{\boldsymbol{\eta}\in
      \mathfrak{B}_{\Omega}}\int_{\Omega}|\boldsymbol{\eta}+\boldsymbol{\alpha}|dx,\label{eq:frustration
      index manifold}
  \end{equation}
  where $\mathfrak{B}_{\Omega}:=\{\boldsymbol{\alpha}_{\tau}|\tau\in
  C^{\infty}(\Omega, U(1))\}$.
\end{Def}

Clearly, the frustration index $\iota^{\boldsymbol{\alpha}}(\Omega)$
is invariant under gauge transformations of the potential
$\boldsymbol{\alpha}$. Roughly speaking, the frustration index measures how far the potential $\boldsymbol{\alpha}$ is from the set $\mathfrak{B}_{\Omega}$.

For any Borel subset $\Omega\subseteq M$, we denote by
$\mathrm{vol}(\Omega)$ its Riemannian volume. Its boundary measure
$\mathrm{area}(\partial\Omega)$ is defined as
\begin{equation}
  \mathrm{area}(\partial\Omega):=\liminf_{r\to 0}\frac{\mathrm{vol}(\Omega_r)-\mathrm{vol}(\Omega)}{r},
\end{equation}
where $\Omega_r$ is the open $r$-neighborhood of $\Omega$. Let us
denote
\begin{equation}
  \phi^{\boldsymbol{\alpha}}(\Omega):=\frac{\iota^{\boldsymbol{\alpha}}(\Omega)+\mathrm{area}(\partial \Omega)}{\mathrm{vol}(\Omega)}.
\end{equation}

\begin{Def}\label{def:Cheeger constant manifold}
  Let $M$ be a closed Riemannian manifold with a magnetic potential~$\boldsymbol{\alpha}$. The
  \emph{$n$-way Cheeger constant} $h_n^{\boldsymbol{\alpha}}$ is
  defined as
  \begin{equation}
    h_n^{\boldsymbol{\alpha}}:=\inf_{\{\Omega_p\}_{[n]}}\ \max_{p\in [n]}\ \phi^{\boldsymbol{\alpha}}(\Omega_p),
  \end{equation}
  where the infimum is taken over all $n$-subpartitions $\{\Omega_p\}_{[n]}$
  with $\mathrm{vol}(\Omega_p)>0$ for every $p\in [n]$.
\end{Def}

In particular, the Cheeger constant $h_1^{\boldsymbol{\alpha}}$
vanishes if and only if $\boldsymbol{\alpha}\in \mathfrak{B}$. We
prove the following lower bound for the first eigenvalue
$\lambda_1(\Delta^{\boldsymbol{\alpha}})$.

\begin{Thm}\label{thm:CheegerManifold}
  Let $\boldsymbol{\alpha}$ be a magnetic potential on a closed connected
  Riemannian manifold $M$. Then we have
  \begin{equation}\label{eq:Cheeger inequality manifold}
    h_1^{\boldsymbol{\alpha}}\leq 2\sqrt{2\lambda_1(\Delta^{\boldsymbol{\alpha}})}.
  \end{equation}
\end{Thm}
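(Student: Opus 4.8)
The plan is to mirror the proof of the discrete Cheeger inequality (Theorem~\ref{thm:CheegerInequality}), with the combinatorial coarea inequality (Lemma~\ref{lemma:coarea}) replaced by a Riemannian coarea inequality for the superlevel sets of an eigenfunction. Fix a first eigenfunction $f$, so that $\Delta^{\boldsymbol{\alpha}}f=\lambda_1(\Delta^{\boldsymbol{\alpha}})f$; by elliptic regularity $f\in C^{\infty}(M,\mathbb{C})$ and $\mathcal{R}^{\boldsymbol{\alpha}}(f)=\lambda_1(\Delta^{\boldsymbol{\alpha}})$. The basic pointwise tool is the \emph{diamagnetic identity}: writing $f=|f|e^{i\varphi}$ locally where $f\neq 0$ gives $(d+i\boldsymbol{\alpha})f=e^{i\varphi}\bigl(d|f|+i|f|(d\varphi+\boldsymbol{\alpha})\bigr)$, and hence, on $\{f\neq 0\}$,
\begin{equation*}
  |(d+i\boldsymbol{\alpha})f|^{2}=\bigl|\nabla|f|\bigr|^{2}+|f|^{2}\,\bigl|(d+i\boldsymbol{\alpha})(f/|f|)\bigr|^{2}.
\end{equation*}
Globally one has $|\nabla(|f|^{2})|=2\bigl|\mathrm{Re}\,(\bar f\,(d+i\boldsymbol{\alpha})f)\bigr|\le 2|f|\,|(d+i\boldsymbol{\alpha})f|$, and from the identity $\bigl|\nabla|f|\bigr|\le|(d+i\boldsymbol{\alpha})f|$ and $|f|\,\bigl|(d+i\boldsymbol{\alpha})(f/|f|)\bigr|\le|(d+i\boldsymbol{\alpha})f|$ wherever $f\neq 0$. (The nodal set $\{f=0\}$ has measure zero by unique continuation, but it is harmless below anyway since everything is phrased via the globally smooth function $|f|^2$.)

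For $t\ge 0$ set $\Omega^{f}(\sqrt t):=\{x\in M\mid |f(x)|^{2}\ge t\}$. For $t>0$ the function $\tau:=f/|f|$ is smooth and $U(1)$-valued on a neighbourhood of $\Omega^f(\sqrt t)$, hence an admissible competitor in Definition~\ref{def:frustration manifold}, so $\iota^{\boldsymbol{\alpha}}(\Omega^{f}(\sqrt t))\le\int_{\{|f|^{2}>t\}}\bigl|(d+i\boldsymbol{\alpha})(f/|f|)\bigr|\,dx$; integrating in $t$ and using Tonelli ($\int_{0}^{|f(x)|^{2}}dt=|f(x)|^{2}$) yields $\int_{0}^{\infty}\iota^{\boldsymbol{\alpha}}(\Omega^{f}(\sqrt t))\,dt\le\int_{M}|f|^{2}\,\bigl|(d+i\boldsymbol{\alpha})(f/|f|)\bigr|\,dx$. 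Since $|f|^{2}$ is globally smooth, Sard's theorem gives that for a.e.\ $t>0$ the level set $\{|f|^{2}=t\}$ is a smooth hypersurface equal to $\partial\Omega^{f}(\sqrt t)$, so $\mathrm{area}(\partial\Omega^{f}(\sqrt t))=\mathcal{H}^{n-1}(\{|f|^{2}=t\})$, and the Riemannian coarea formula gives $\int_{0}^{\infty}\mathrm{area}(\partial\Omega^{f}(\sqrt t))\,dt=\int_{M}|\nabla(|f|^{2})|\,dx$. Adding the two contributions and bounding the integrand pointwise — writing $a=\bigl|\nabla|f|\bigr|$, $b=|f|\,\bigl|(d+i\boldsymbol{\alpha})(f/|f|)\bigr|$, so that $a^{2}+b^{2}=|(d+i\boldsymbol{\alpha})f|^{2}$ and $|f|^2|(d+i\boldsymbol\alpha)(f/|f|)|+|\nabla(|f|^2)|=|f|(b+2a)\le\sqrt5\,|f|\sqrt{a^{2}+b^{2}}$ by Cauchy--Schwarz in $\mathbb{R}^2$ — I obtain the coarea inequality
\begin{equation*}
  \int_{0}^{\infty}\Bigl(\iota^{\boldsymbol{\alpha}}(\Omega^{f}(\sqrt t))+\mathrm{area}(\partial\Omega^{f}(\sqrt t))\Bigr)\,dt\;\le\;2\sqrt2\int_{M}|f|\,|(d+i\boldsymbol{\alpha})f|\,dx,
\end{equation*}
which is the manifold analogue of Lemma~\ref{lemma:coarea} (i.e.\ Lemma~\ref{lemma:coarea manifold}).

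It remains to average and apply Cauchy--Schwarz, exactly as in Section~\ref{section:Cheeger inequality}. Since $\int_{0}^{\infty}\mathrm{vol}(\Omega^{f}(\sqrt t))\,dt=\int_{M}|f|^{2}\,dx>0$, dividing the last displayed inequality by this quantity produces some $t'\in(0,\max_{M}|f|^{2})$, for which $\mathrm{vol}(\Omega^{f}(\sqrt{t'}))>0$ and
\begin{equation*}
  \phi^{\boldsymbol{\alpha}}\bigl(\Omega^{f}(\sqrt{t'})\bigr)\;\le\;\frac{2\sqrt2\int_{M}|f|\,|(d+i\boldsymbol{\alpha})f|\,dx}{\int_{M}|f|^{2}\,dx}\;\le\;2\sqrt2\,\sqrt{\mathcal{R}^{\boldsymbol{\alpha}}(f)}\;=\;2\sqrt{2\lambda_{1}(\Delta^{\boldsymbol{\alpha}})},
\end{equation*}
the middle step being Cauchy--Schwarz. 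As $\Omega^{f}(\sqrt{t'})$ is an admissible competitor for $h_{1}^{\boldsymbol{\alpha}}$, this proves the claim. The main obstacle is the coarea inequality: one must justify that the Minkowski-content quantity $\mathrm{area}(\partial\Omega)$ coincides with $\mathcal{H}^{n-1}$ of the level set for a.e.\ level (via Sard applied to the smooth $|f|^{2}$, which also circumvents the fact that $|f|$ itself is only Lipschitz across $\{f=0\}$), and verify that $f/|f|$ is genuinely admissible in Definition~\ref{def:frustration manifold} on each superlevel set together with the finiteness and measurability of $t\mapsto\iota^{\boldsymbol{\alpha}}(\Omega^{f}(\sqrt t))$. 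A further, quantitative subtlety is that estimating the frustration and boundary terms separately only yields the constant $3$, which exceeds $2\sqrt2$; it is the \emph{combined} pointwise bound $b+2a\le\sqrt5\sqrt{a^{2}+b^{2}}$, made possible by the diamagnetic identity, that brings the constant down to (in fact below) $2\sqrt2$. Everything else is a routine transcription of the argument in Section~\ref{section:Cheeger inequality}.
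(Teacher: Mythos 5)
Your proof is correct and follows essentially the same route as the paper's Lemma~\ref{lemma:coarea manifold} together with Lemma~\ref{lemma:prepare for higher}: the diamagnetic splitting $|(d+i\boldsymbol{\alpha})f|^2=\bigl|d|f|\bigr|^2+|f|^2\bigl|(d+i\boldsymbol{\alpha})(f/|f|)\bigr|^2$, admissibility of $f/|f|$ in the frustration index, the Riemannian coarea formula for the boundary term, an averaging argument, and Cauchy--Schwarz. Two small but genuine refinements: applying Sard and the coarea formula to the globally smooth $|f|^2$ avoids the paper's $\varepsilon$-regularization of the nodal set (the paper works with $f_0=|f|$ and assumes $f_0>0$ WLOG), and the optimal joint Cauchy--Schwarz estimate $2a+b\le\sqrt{5}\sqrt{a^2+b^2}$ actually yields the coarea inequality with constant $\sqrt{5}$ (hence $h_1^{\boldsymbol{\alpha}}\le\sqrt{5\lambda_1(\Delta^{\boldsymbol{\alpha}})}$), sharper than the paper's $2\sqrt{2}$, which arises from first using $a+b\le\sqrt{2}\sqrt{a^2+b^2}$ and then the lossy bound $2a+b\le 2(a+b)$.
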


We first prove the following Lemma which is an analogue of Lemma~\ref{lemma:coarea}.

\begin{Lem}[Coarea inequality]\label{lemma:coarea manifold}
  Let $\boldsymbol{\alpha}$ be a magnetic potential on $M$. For any
  nonzero smooth function $f: M\to \mathbb{C}$, we have
  \begin{equation}\label{eq:coarea inequality manifold}
    \int_0^{\infty}\left(\iota^{\boldsymbol{\alpha}}(\Omega^{f}(\sqrt{t}))+\mathrm{area}(\partial \Omega^f(\sqrt{t}))\right)dt\leq 2\sqrt{2}\int_M|f|\cdot|(d+i\boldsymbol{\alpha})f|dx,
  \end{equation}
  where we use the notation $\Omega^f(\sqrt{t}):=\{x\in
  M\mid \sqrt{t} \leq |f(x)|\}$.
\end{Lem}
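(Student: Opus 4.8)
The plan is to carry over the proof of the discrete coarea inequality, Lemma~\ref{lemma:coarea}, to the Riemannian setting: the role of the functions $Y_{\sqrt t,\theta}$ is now played by the one–parameter family $x\mapsto \frac{f(x)}{|f(x)|}\,\mathbf 1_{\{|f(x)|\ge\sqrt t\}}$ (there is no angular parameter, because the structure group is the full circle $U(1)$, so $z\mapsto z/|z|$ already lands in $U(1)$), and the sum over edges is replaced by a coarea integral. Since $|f|$ is only Lipschitz across the zero set of $f$, I would work throughout with the globally smooth function $|f|^2=f\overline f$, whose superlevel set at level $t$ is precisely $\Omega^f(\sqrt t)=\{|f|^2\ge t\}$, so that the classical coarea formula applies directly.

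The starting point is a pointwise decomposition on the open set $\{f\neq 0\}$. Writing $g:=f/|f|$ and $D:=d+i\boldsymbol{\alpha}$, the Leibniz rule gives $Df=D(|f|\,g)=(d|f|)\,g+|f|\,Dg$. Since $|g|\equiv1$, the $1$-form $\overline g\,Dg$ is purely imaginary, which makes $(d|f|)\,g$ and $|f|\,Dg$ pointwise orthogonal for the Hermitian inner product on $1$-forms; this yields the diamagnetic-type identity
\[
  |Df|^2=\big|d|f|\big|^2+|f|^2\,|Dg|^2\qquad\text{on }\{f\neq 0\},
\]
and in particular $\big|d|f|\big|\le|Df|$ and $|f|\,|Dg|\le|Df|$. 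One also computes $d(|f|^2)=2\,\mathrm{Re}(\overline f\,Df)=2|f|\,d|f|$, so $|\nabla(|f|^2)|=2|f|\,\big|d|f|\big|$.

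Next I would estimate the two summands on the left-hand side of~\eqref{eq:coarea inequality manifold} separately. For the boundary term, Sard's theorem gives that for a.e.\ $t>0$ the set $\partial\Omega^f(\sqrt t)=\{|f|^2=t\}$ is a smooth hypersurface whose Minkowski content equals its $(\dim M-1)$-dimensional Hausdorff measure, and the coarea formula for the smooth function $|f|^2$ gives
\[
  \int_0^\infty\mathrm{area}\big(\partial\Omega^f(\sqrt t)\big)\,dt=\int_M|\nabla(|f|^2)|\,dx=2\int_M|f|\,\big|d|f|\big|\,dx .
\]
For the frustration term, for each $t>0$ the function $g$ is smooth on a neighbourhood of the compact set $\Omega^f(\sqrt t)\subseteq\{f\neq 0\}$, hence is an admissible competitor in Definition~\ref{def:frustration manifold}, so $\iota^{\boldsymbol{\alpha}}(\Omega^f(\sqrt t))\le\int_{\Omega^f(\sqrt t)}|Dg|\,dx$; integrating in $t$ and applying Tonelli's theorem together with $\int_0^\infty\mathbf 1_{\{|f(x)|^2\ge t\}}\,dt=|f(x)|^2$ gives
\[
  \int_0^\infty\iota^{\boldsymbol{\alpha}}\big(\Omega^f(\sqrt t)\big)\,dt\le\int_M|f|^2\,|Dg|\,dx=\int_M|f|\,\big(|f|\,|Dg|\big)\,dx .
\]
Adding the two displays and bounding the integrand pointwise via Cauchy--Schwarz, $2a+b\le 2(a+b)\le 2\sqrt2\,\sqrt{a^2+b^2}$ with $a:=\big|d|f|\big|$, $b:=|f|\,|Dg|$ and $a^2+b^2=|Df|^2$, produces
\[
  \int_0^\infty\Big(\iota^{\boldsymbol{\alpha}}(\Omega^f(\sqrt t))+\mathrm{area}(\partial\Omega^f(\sqrt t))\Big)\,dt\le\int_M|f|\,\big(2\big|d|f|\big|+|f|\,|Dg|\big)\,dx\le 2\sqrt2\int_M|f|\,|Df|\,dx ,
\]
which is~\eqref{eq:coarea inequality manifold}.

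The differential-geometric identities above are routine once the twisted Leibniz rule is in place; I expect the main obstacle to be the measure-theoretic housekeeping around the zero set of $f$ and the level sets of $|f|^2$ --- in particular, justifying that the Minkowski content defining $\mathrm{area}(\partial\Omega^f(\sqrt t))$ agrees with the Hausdorff measure of the level set for a.e.\ $t$, that the coarea formula applies, and that $f/|f|$ is a legitimate $C^\infty$ competitor for $\iota^{\boldsymbol{\alpha}}$ on each (not necessarily smoothly bounded) superlevel set. The same computation in fact gives the sharper constant $\sqrt5$; the value $2\sqrt2$ is the one that feeds directly into the proof of Theorem~\ref{thm:CheegerManifold} via Cauchy--Schwarz applied to the first eigenfunction.
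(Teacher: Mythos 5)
Your proposal is correct and rests on the same two pillars as the paper's proof: the pointwise diamagnetic decomposition $|(d+i\boldsymbol{\alpha})f|^2=\big|d|f|\big|^2+|f|^2\,|(d+i\boldsymbol{\alpha})(f/|f|)|^2$ on $\{f\neq 0\}$, and testing the frustration index against the competitor $\tau=f/|f|$. The differences are organizational but genuinely improve the argument. First, you apply the coarea formula to the globally smooth function $|f|^2$ rather than to $f_0=|f|$; the paper works with $f_0$, which forces the (somewhat awkward) reduction ``w.l.o.g.\ $f_0>0$, otherwise approximate by integrating over $\Omega^f(\varepsilon)$,'' whereas your route sidesteps the zero set cleanly. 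Second, the paper performs the pointwise inequality $|Df|\ge\tfrac{1}{\sqrt2}(|df_0|+f_0|\boldsymbol{\eta}_f+\boldsymbol{\alpha}|)$ \emph{before} integrating and then discards a factor $2$ when converting $\int_M f_0^2|\boldsymbol{\eta}_f+\boldsymbol{\alpha}|\,dx$ into a layer-cake integral in order to get a common coefficient on the area and frustration terms. You integrate first (coarea and Tonelli, with no loss) and only then estimate the asymmetric integrand $2a+b$ with $a=\big|d|f|\big|$, $b=|f|\,|Dg|$; this is exactly where the paper's wasted factor of $2$ reappears as an improvement, since the sharp bound $2a+b\le\sqrt5\,\sqrt{a^2+b^2}$ gives the constant $\sqrt5$ in place of $2\sqrt2$, as you note. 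Your side remarks on the measure-theoretic housekeeping (Sard's theorem, Minkowski content versus Hausdorff measure for a.e.\ level set, admissibility of $f/|f|$ on superlevel sets) identify precisely the same points that the paper leaves implicit, so nothing is missing.
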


\begin{proof}
  For convenience, we denote $f_0:=|f|$. W.l.o.g., we assume that
  $f_0(x)>0$, for any $x\in M$. Otherwise, we first consider
  integration over $\Omega^f(\varepsilon)$ in the right hand side of
  (\ref{eq:coarea inequality manifold}), $\varepsilon>0$, and then let
  $\varepsilon\to 0$.

  For the function $f$, we have the following associated $1$-form in
  $\mathfrak{B}$:
  \begin{equation}
    \boldsymbol{\eta}_f := \boldsymbol{\alpha}_{\frac{f}{f_0}}.
  \end{equation}
  Locally, there is a smooth real-valued function $\theta$ such that
  $f/f_0=e^{i\theta}$ and $\boldsymbol{\eta}_f=d\theta$. Therefore, we
  have locally
  \begin{equation}
    |(d+i\boldsymbol{\alpha})f|= |(d+i\boldsymbol{\alpha})(f_0e^{i\theta})|=|df_0+if_0(d\theta+\boldsymbol{\alpha})|.
  \end{equation}
  This implies that
  \begin{equation}
    |(d+i\boldsymbol{\alpha})f|=|df_0+if_0(\boldsymbol{\eta}_f+\boldsymbol{\alpha})|.
  \end{equation}
  Note that both $df_0$ and
  $f_0(\boldsymbol{\eta}_f+\boldsymbol{\alpha})$ are real-valued
  $1$-forms. We estimate
  \begin{equation}\label{eq:coarea Est1}
    |(d+i\boldsymbol{\alpha})f|=\sqrt{|df_0|^2+|f_0(\boldsymbol{\eta}_f+\boldsymbol{\alpha})|^2}\geq\frac{1}{\sqrt{2}}\left(|df_0|+|f_0(\boldsymbol{\eta}_f+\boldsymbol{\alpha})|\right).
  \end{equation}
  By the co-area formula, we have
  \begin{equation}\label{eq:coarea Est2}
    \int_M\, f_0\, |df_0|\, dx=\int_0^{\infty}t\cdot\mathrm{area}(\partial \Omega^{f_0}(t))\, dt.
  \end{equation}
  We also have
  \begin{align}\label{eq:coarea Est3}
    \int_Mf_0^2\, |\boldsymbol{\eta}_f+\boldsymbol{\alpha}|\, dx
		&= 2\int_0^{\infty} t\int_{\Omega^{f_0}(t)}|\boldsymbol{\eta}_f+\boldsymbol{\alpha}|\, dx\, dt\notag\\
		&\geq \int_0^{\infty} t\int_{\Omega^{f_0}(t)}|\boldsymbol{\eta}_f+\boldsymbol{\alpha}|\, dx\, dt.
  \end{align}
  Combining (\ref{eq:coarea Est1}), (\ref{eq:coarea Est2}), and (\ref{eq:coarea Est3}), we obtain
  \begin{align*}
    \int_M|f|\cdot|(d+i\boldsymbol{\alpha})f|dx\geq&\frac{1}{2\sqrt{2}}\int_0^{\infty}2t\left(\mathrm{area}(\partial \Omega^f(t))+\int_{\Omega^f(t)}|\boldsymbol{\eta}_f+\boldsymbol{\alpha}|dx\right)dt\\
    =&\frac{1}{2\sqrt{2}}\int_0^{\infty}\left(\mathrm{area}(\partial
      \Omega^f(\sqrt{t}))+\int_{\Omega^f(\sqrt{t})}|\boldsymbol{\eta}_f+\boldsymbol{\alpha}|dx\right)dt
  \end{align*}
  Recalling the definition of the frustration index
  (\ref{eq:frustration index manifold}), this proves the lemma.
\end{proof}

Similarly as in Section \ref{section:Cheeger inequality} for the discrete setting, we derive the following lemma from the coarea inequality, which is the continuous analogue of Lemma \ref{lemma:clusteringIU(1)}.
\begin{Lem}\label{lemma:prepare for higher}
Let $\boldsymbol{\alpha}$ be a magnetic potential on $M$. For any
  nonzero smooth function $f: M\to \mathbb{C}$, there exists $t'\in [0,\max_{x\in M}|f(x)|^2]$ such that
  \begin{equation}\label{eq:phiRayleigh}
     \phi^{\boldsymbol{\alpha}}(\Omega^f(\sqrt{t'}))\leq 2\sqrt{2\mathcal{R}^{\boldsymbol{\alpha}}(f)}.
  \end{equation}
\end{Lem}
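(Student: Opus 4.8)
The plan is to imitate the proof of Lemma~\ref{lemma:clusteringI} (more precisely its $U(1)$-version Lemma~\ref{lemma:clusteringIU(1)}), with the discrete coarea inequality replaced throughout by its manifold counterpart Lemma~\ref{lemma:coarea manifold}. First I would record the continuous \emph{area formula}: applying Fubini's theorem to the indicator function of $\{(x,t)\in M\times(0,\infty): t\le |f(x)|^2\}$ yields
\[
  \int_0^\infty \vol\big(\Omega^f(\sqrt t)\big)\,dt
    = \int_0^\infty \vol\big(\{x\in M: |f(x)|^2\ge t\}\big)\,dt
    = \int_M |f|^2\,dx,
\]
which is finite because $M$ is compact and strictly positive because $f\not\equiv 0$. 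Since $\Omega^f(\sqrt t)=\emptyset$ for $t>\max_{x\in M}|f(x)|^2$, both sides of Lemma~\ref{lemma:coarea manifold} and the area formula are in fact integrals over $[0,\max_{x\in M}|f(x)|^2]$, and by continuity of $|f|$ one has $\vol(\Omega^f(\sqrt t))>0$ for $t$ in a subset of $(0,\max_{x\in M}|f(x)|^2)$ of full Lebesgue measure.

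Next I would form the ratio
\[
  I := \frac{\displaystyle\int_0^\infty\Big(\iota^{\boldsymbol{\alpha}}(\Omega^f(\sqrt t))+\mathrm{area}(\partial\Omega^f(\sqrt t))\Big)\,dt}
            {\displaystyle\int_0^\infty\vol(\Omega^f(\sqrt t))\,dt}
\]
and argue by a standard averaging (mean-value) argument that there exists $t'\in[0,\max_{x\in M}|f(x)|^2]$ with $\vol(\Omega^f(\sqrt{t'}))>0$ and $\phi^{\boldsymbol{\alpha}}(\Omega^f(\sqrt{t'}))\le I$: otherwise $\iota^{\boldsymbol{\alpha}}(\Omega^f(\sqrt t))+\mathrm{area}(\partial\Omega^f(\sqrt t))>I\cdot\vol(\Omega^f(\sqrt t))$ would hold for almost every $t$ with positive volume, and integrating in $t$ would contradict the definition of $I$. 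It then remains to bound $I$ from above. By Lemma~\ref{lemma:coarea manifold} the numerator is at most $2\sqrt2\int_M|f|\,|(d+i\boldsymbol{\alpha})f|\,dx$, and Cauchy--Schwarz bounds this by $2\sqrt2\big(\int_M|f|^2\,dx\big)^{1/2}\big(\int_M|(d+i\boldsymbol{\alpha})f|^2\,dx\big)^{1/2}$. Dividing by $\int_M|f|^2\,dx$ (the area formula) and recalling that $\mathcal{R}^{\boldsymbol{\alpha}}(f)=\big(\int_M|(d+i\boldsymbol{\alpha})f|^2\,dx\big)\big/\big(\int_M|f|^2\,dx\big)$ gives $I\le 2\sqrt2\sqrt{\mathcal{R}^{\boldsymbol{\alpha}}(f)}=2\sqrt{2\mathcal{R}^{\boldsymbol{\alpha}}(f)}$, and combining with $\phi^{\boldsymbol{\alpha}}(\Omega^f(\sqrt{t'}))\le I$ finishes the proof.

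I do not expect a genuine obstacle here: the real content has been placed upstream in Lemma~\ref{lemma:coarea manifold} (the pointwise orthogonal splitting of $(d+i\boldsymbol{\alpha})f$ into $d|f|$ and the real one-form $|f|(\boldsymbol{\eta}_f+\boldsymbol{\alpha})$, combined with the classical coarea formula). The only points needing a little care are bookkeeping ones: making sure the selected level $t'$ lies in $[0,\max_{x\in M}|f(x)|^2]$ and that $\Omega^f(\sqrt{t'})$ has positive volume so that $\phi^{\boldsymbol{\alpha}}(\Omega^f(\sqrt{t'}))$ is well defined (both handled by the vanishing of the two integrands for larger $t$), and noting the elementary identity $2\sqrt2\sqrt{x}=2\sqrt{2x}$ which makes the constant come out exactly as stated.
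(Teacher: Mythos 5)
Your proposal is correct and follows exactly the route the paper takes: the paper's proof likewise notes the existence of a level $t'$ with $\phi^{\boldsymbol{\alpha}}(\Omega^f(\sqrt{t'}))$ bounded by the ratio of the two $t$-integrals, records the area formula $\int_0^\infty\mathrm{vol}(\Omega^f(\sqrt t))\,dt=\int_M|f|^2\,dx$, and then invokes Lemma~\ref{lemma:coarea manifold} together with Cauchy--Schwarz. Your version merely spells out the averaging step and the bookkeeping (that $t'$ lies in the stated interval and $\Omega^f(\sqrt{t'})$ has positive volume) more explicitly than the paper does.
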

\begin{proof}
First observe that there exists $t'$ such that
\begin{equation}
  \phi^{\boldsymbol{\alpha}}(\Omega^f(\sqrt{t'}))\leq \frac{\int_0^{\infty}\left(\iota^{\boldsymbol{\alpha}}(\Omega^{f}(\sqrt{t}))+\mathrm{area}(\partial \Omega^f(\sqrt{t}))\right)dt}{\int_0^\infty\mathrm{vol}(\Omega^f(\sqrt{t}))dt}.
\end{equation}
Note that $\int_M|f(x)|^2dx=\int_0^\infty\mathrm{vol}(\Omega^f(\sqrt{t}))dt$.
Then the lemma follows from applying the coarea inequality and Cauchy-Schwarz inequality.
\end{proof}

Theorem \ref{thm:CheegerManifold} is proved by applying Lemma \ref{lemma:prepare for higher} to the corresponding eigenfunction of $\lambda_1(\Delta^{\boldsymbol{\alpha}})$.
We also have the following higher order Cheeger inequalities for the
magnetic Laplacian $\Delta^{\boldsymbol{\alpha}}$.

\begin{Thm}\label{thm:higherManifold}
There exists an absolute constant $C>0$ such that for any closed connected Riemannian manifold
	$M$ with a magnetic
	potential~$\boldsymbol{\alpha}$ and $n \in {\mathbb N}$, we have
  \begin{equation}
    h_n^{\boldsymbol{\alpha}}\leq Cn^3\sqrt{\lambda_n(\Delta^{\boldsymbol{\alpha}})}.
  \end{equation}
\end{Thm}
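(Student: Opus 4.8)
The plan is to transfer the argument of Section~\ref{section:proofof higher} to the Riemannian setting, relying on the fact that Theorem~\ref{thm:decomposition} was set up for a general measure space $(\mathcal{V},\mu)$. First I would fix an $L^2(M,\mathbb{C})$-orthonormal family $f_1,\dots,f_n$ of eigenfunctions of $\Delta^{\boldsymbol{\alpha}}$ belonging to $\lambda_1(\Delta^{\boldsymbol{\alpha}}),\dots,\lambda_n(\Delta^{\boldsymbol{\alpha}})$ and form $F=(f_1,\dots,f_n)\colon M\to\mathbb{C}^n$. Since $\int_M|(d+i\boldsymbol{\alpha})f_p|^2\,dx=\lambda_p(\Delta^{\boldsymbol{\alpha}})$ and $\int_M|f_p|^2\,dx=1$, we have $\mu_F(M_F)=\int_M\Vert F\Vert^2\,dx=n$ and
\begin{equation}
  \mathcal{R}^{\boldsymbol{\alpha}}(F)=\frac{\sum_{p\in[n]}\int_M|(d+i\boldsymbol{\alpha})f_p|^2\,dx}{\sum_{p\in[n]}\int_M|f_p|^2\,dx}=\frac{1}{n}\sum_{p\in[n]}\lambda_p(\Delta^{\boldsymbol{\alpha}})\le\lambda_n(\Delta^{\boldsymbol{\alpha}}).
\end{equation}
Applying Theorem~\ref{thm:decomposition} to $(\mathcal{V},\mu)=(M,\mathrm{vol})$ with this family yields an absolute constant $C_0$ and a nontrivial $n$-subpartition $\{T_p\}_{[n]}$ of $M_F=\{F\neq 0\}$ with $d_F(T_p,T_q)\ge\frac{2}{C_0n^{5/2}}$ for $p\neq q$ and $\mu_F(T_p)\ge\frac{1}{2n}\mu_F(M_F)=\frac{1}{2}$.

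Next I would localize $F$ as in Section~\ref{subsec:localization}: set $\epsilon=\frac{1}{C_0n^{5/2}}$, let $\eta_p(x)=\max\{0,1-\tfrac{1}{\epsilon}d_F(x,T_p)\}$ for $x\in M_F$ and $\eta_p=0$ otherwise, and put $\Psi_p=\eta_pF$. Because $d_F(T_p,T_q)\ge 2\epsilon$, the $\Psi_p$ have pairwise disjoint supports, and $\Psi_p|_{T_p}=F|_{T_p}$ gives $\int_M\Vert\Psi_p\Vert^2\,dx\ge\mu_F(T_p)\ge\frac{1}{2}$. The decisive ingredient---the continuous counterpart of Lemmata~\ref{lemma:keyForLocal} and~\ref{lemma:localization}---is the pointwise localization estimate
\begin{equation}\label{eq:proposalLoc}
  \Vert(d+i\boldsymbol{\alpha})\Psi_p\Vert(x)\le\left(1+\frac{C}{\epsilon}\right)\Vert(d+i\boldsymbol{\alpha})F\Vert(x)\qquad\text{for a.e. }x\in M.
\end{equation}
By $(d+i\boldsymbol{\alpha})(\eta_pF)=(d\eta_p)F+\eta_p(d+i\boldsymbol{\alpha})F$ and $0\le\eta_p\le1$, this reduces to $|d\eta_p|(x)\,\Vert F(x)\Vert\le\frac{C}{\epsilon}\Vert(d+i\boldsymbol{\alpha})F\Vert(x)$, which I would prove by writing $f_0=\Vert F\Vert$, $G=F/f_0\colon M_F\to\mathbb{S}^{2n-1}$, decomposing $(d+i\boldsymbol{\alpha})F=(df_0)G+f_0(dG+i\boldsymbol{\alpha}G)$ and splitting $dG+i\boldsymbol{\alpha}G$ into its component along the vertical direction $iG$ of the $\Gamma$-action and its Hermitian-orthogonal horizontal component. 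Mutual orthogonality of the three pieces gives $\Vert(d+i\boldsymbol{\alpha})F\Vert\ge f_0\,|d[F]|_{d_{quot}}$, where $[F]\colon M_F\to\mathbb{S}^{2n-1}/\Gamma$ and $d_{quot}$ is the Fubini--Study (respectively lens) metric; the point is that the extra summand $i\boldsymbol{\alpha}G$ is vertical and hence does not alter the horizontal part of $dG$. Since $\eta_p$ is $\frac{1}{\epsilon}$-Lipschitz for $d_F$ and $d\le c_2\,d_{quot}$ by~\eqref{eq:equivalentMetric}, we get $|d\eta_p|\le\frac{c_2}{\epsilon}|d[F]|_{d_{quot}}\le\frac{c_2}{\epsilon f_0}\Vert(d+i\boldsymbol{\alpha})F\Vert$, which is exactly the required bound; here the $\Gamma$-invariance built into the metric~\eqref{eq:roughmetric} (see Proposition~\ref{pro:reasonfor metric}) is what makes the magnetic phase irrelevant.

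Granting~\eqref{eq:proposalLoc}, I would conclude as in Section~\ref{section:proofof higher}. Combining~\eqref{eq:proposalLoc} with $\int_M\Vert\Psi_p\Vert^2\,dx\ge\frac{1}{2}$ and $\int_M\Vert(d+i\boldsymbol{\alpha})F\Vert^2\,dx=n\,\mathcal{R}^{\boldsymbol{\alpha}}(F)\le n\lambda_n(\Delta^{\boldsymbol{\alpha}})$ gives, with $\epsilon^{-1}=C_0n^{5/2}$,
\begin{equation}
  \mathcal{R}^{\boldsymbol{\alpha}}(\Psi_p)\le 2\left(1+\frac{C}{\epsilon}\right)^2\int_M\Vert(d+i\boldsymbol{\alpha})F\Vert^2\,dx\le 2n\,(1+CC_0n^{5/2})^2\,\lambda_n(\Delta^{\boldsymbol{\alpha}})\le C'n^6\,\lambda_n(\Delta^{\boldsymbol{\alpha}}).
\end{equation}
For each $p$, some coordinate function $\psi_p$ of $\Psi_p$ satisfies $\mathcal{R}^{\boldsymbol{\alpha}}(\psi_p)\le\mathcal{R}^{\boldsymbol{\alpha}}(\Psi_p)$ (the Rayleigh quotient of a vector-valued map is a weighted average of those of its coordinates), and the $\psi_p$ are nonzero with pairwise disjoint supports. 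Since the coarea formula---hence Lemma~\ref{lemma:coarea manifold} and Lemma~\ref{lemma:prepare for higher}---extends to Lipschitz functions, applying Lemma~\ref{lemma:prepare for higher} to each $\psi_p$ produces $t_p'\in(0,\max_{x\in M}|\psi_p(x)|^2]$ with $\phi^{\boldsymbol{\alpha}}(\Omega^{\psi_p}(\sqrt{t_p'}))\le 2\sqrt{2\,\mathcal{R}^{\boldsymbol{\alpha}}(\psi_p)}\le Cn^3\sqrt{\lambda_n(\Delta^{\boldsymbol{\alpha}})}$. The sets $\Omega_p:=\Omega^{\psi_p}(\sqrt{t_p'})$ lie inside the pairwise disjoint supports of the $\psi_p$ and have positive volume, so $\{\Omega_p\}_{[n]}$ is a nontrivial $n$-subpartition of $M$, whence $h_n^{\boldsymbol{\alpha}}\le\max_{p\in[n]}\phi^{\boldsymbol{\alpha}}(\Omega_p)\le Cn^3\sqrt{\lambda_n(\Delta^{\boldsymbol{\alpha}})}$.

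The hard part is the localization estimate~\eqref{eq:proposalLoc}: unlike in the discrete case, where Lemma~\ref{lemma:keyForLocal} is an elementary two-point inequality, here one must control the Riemannian differential of the distance-to-$T_p$ function for the lens- (respectively Fubini--Study-) pseudometric induced by $F$ in terms of the magnetic gradient $(d+i\boldsymbol{\alpha})F$, which is a Riemannian-submersion computation identifying $i\boldsymbol{\alpha}$ as a vertical contribution relative to the $U(1)$-fibration $\mathbb{S}^{2n-1}\to\mathbb{S}^{2n-1}/U(1)$. The remaining steps are routine: verifying that the cut-offs are Lipschitz of order $\epsilon^{-1}$ for $d_F$ (so that Lemma~\ref{lemma:prepare for higher} applies after its evident extension to Lipschitz functions), and that $t_p'$ can be chosen strictly positive, so that the superlevel sets $\Omega_p$ stay pairwise disjoint.
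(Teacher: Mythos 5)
Your overall scheme matches the paper's proof of Theorem~\ref{thm:higherManifold}: take orthonormal eigenfunctions $f_1,\dots,f_n$, form $F=(f_1,\dots,f_n)$, apply Theorem~\ref{thm:decomposition} to $(M_F,d_F,\Vert F\Vert^2 dx)$ to get well-separated $T_p$'s of comparable $\mu_F$-mass, localize $F$ by the $d_F$-Lipschitz cut-offs $\eta_p$, pick a good coordinate $\psi_p$ of $\Psi_p=\eta_p F$, and finish with Lemma~\ref{lemma:prepare for higher}. The arithmetic $\mathcal{R}^{\boldsymbol{\alpha}}(\Psi_p)\le 2n(1+\mathrm{O}(n^{5/2}))^2\lambda_n(\Delta^{\boldsymbol{\alpha}})=\mathrm{O}(n^6)\lambda_n(\Delta^{\boldsymbol{\alpha}})$ and the final bound $\mathrm{O}(n^3)\sqrt{\lambda_n}$ are the same.

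Where you genuinely depart from the paper is in the decisive localization estimate. The paper's Lemma~\ref{lemma:localization manifold} is proved by an explicit phase correction: along a geodesic $\sigma(t)$ one multiplies $F(\sigma(t))$ by $\gamma(t)=\exp\bigl(i\int_0^t\langle\boldsymbol{\alpha}(\sigma(s)),\sigma'(s)\rangle\,ds\bigr)$ so that the magnetic derivative of $F$ along $\sigma$ becomes the ordinary derivative of $G(t)=\gamma(t)F(\sigma(t))$, and then a two-point estimate in the spirit of Lemma~\ref{lemma:keyForLocal} (using $\Gamma$-invariance of $d$) controls $|d\eta|\Vert F\Vert$. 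You instead use the Hopf fibration $\mathbb{S}^{2n-1}\to\mathbb{C}P^{n-1}$ as a Riemannian submersion: write $F=f_0G$, decompose $(d+i\boldsymbol{\alpha})F$ into radial, vertical (along $iG$), and horizontal parts, observe that the term $i\boldsymbol{\alpha}G$ is purely vertical so that $\Vert(d+i\boldsymbol{\alpha})F\Vert\ge f_0\,|d[F]|_{d_{quot}}$, and then pass from $d_{quot}$ to the chordal metric $d$ via~\eqref{eq:equivalentMetric}. This is the same underlying idea -- the $U(1)$-quotient kills the magnetic contribution, which is exactly why the chordal metric $d$ and Proposition~\ref{pro:reasonfor metric} are used -- but your version is more conceptual (one global submersion computation instead of a pointwise gauge trick along geodesics), and it even yields a marginally better localization constant $\bigl(1+\mathrm{O}(1/\epsilon)\bigr)$ for the non-squared gradient, versus $\sqrt{2(1+4/\epsilon^2)}$. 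You are also right to flag, more explicitly than the paper does, that Lemma~\ref{lemma:prepare for higher} must be applied to Lipschitz (not smooth) functions $\psi_p$ and that $t_p'$ must be taken strictly positive so that the $\Omega_p$ stay inside the pairwise disjoint open sets $\{\psi_p\neq 0\}$; both are routine but worth saying. One tiny cosmetic point: the aside ``(respectively lens) metric'' is out of place in the manifold setting, since $\Gamma=U(1)$ is the only case that arises there and the $S_k^1$-quotient is a discrete quotient with no vertical direction; dropping it avoids suggesting the argument also covers a case that does not occur.
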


For the proof, first consider Lemma \ref{lemma:localization manifold} below which is
an analogue of Lemma~\ref{lemma:localization}.  Let $F: M \to {\mathbb C}$ be the map given
by
\begin{equation}
F(x)=(f_1(x), f_2(x), \ldots, f_n(x))\in \mathbb{C}^n,
\end{equation}
where $f_i$ are orthonormal eigenfunctions that correspond to the eigenvalues
$\lambda_i(\Delta^{\boldsymbol{\alpha}})$ for $i \in [n]$. The
pseudometric $d_F$ on $M_F:=\{x\in M \mid F(x)\neq 0\}$ is defined by (\ref{eq:pseudometric}) via
\begin{equation}\label{eq:metricManifoldcase}
  d_F(x,y):=\inf_{\gamma\in U(1)}\left\Vert\frac{F(x)}{\Vert F(x)\Vert}-\gamma\frac{F(y)}{\Vert F(y)\Vert}\right\Vert.
\end{equation}
For $\epsilon > 0$, the cut-off function $\eta$ from (\ref{eq:cutoff}) is directly transferred to the manifold setting
and yields a localized function $\eta F$.

\begin{Lem}\label{lemma:localization manifold}
  For almost every $x\in M$, we have
  \begin{equation}\label{eq:localization manifold}
    \Vert (d+i\boldsymbol{\alpha})(\eta F)(x)\Vert^2\leq 2\left(1+\frac{4}{\epsilon^2}\right)\Vert (d+i\boldsymbol{\alpha})F(x)\Vert^2.
  \end{equation}
\end{Lem}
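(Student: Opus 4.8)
The plan is to transfer the discrete localization estimate (Lemma~\ref{lemma:localization}, together with its key ingredient Lemma~\ref{lemma:keyForLocal}) to an infinitesimal, almost-everywhere statement, replacing difference quotients along edges by derivatives along curves. Since $\eta$ is real valued and $F$ is $\mathbb{C}^n$ valued, the Leibniz rule applied componentwise gives $(d+i\boldsymbol{\alpha})(\eta f_j)=(d\eta)f_j+\eta\,(d+i\boldsymbol{\alpha})f_j$, hence $(d+i\boldsymbol{\alpha})(\eta F)=(d\eta)\otimes F+\eta\,(d+i\boldsymbol{\alpha})F$ as a $\mathbb{C}^n$ valued $1$-form. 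Using $\Vert a+b\Vert^2\le 2\Vert a\Vert^2+2\Vert b\Vert^2$ together with $0\le\eta\le 1$ and $\Vert(d\eta)\otimes F\Vert^2=|d\eta|^2\Vert F\Vert^2$, one gets, for a.e.\ $x$,
\[
 \Vert(d+i\boldsymbol{\alpha})(\eta F)(x)\Vert^2\le 2\,|d\eta(x)|^2\,\Vert F(x)\Vert^2+2\,\Vert(d+i\boldsymbol{\alpha})F(x)\Vert^2 .
\]
So everything reduces to the pointwise estimate $|d\eta(x)|^2\Vert F(x)\Vert^2\le\tfrac{4}{\epsilon^2}\Vert(d+i\boldsymbol{\alpha})F(x)\Vert^2$ (in fact the cleaner constant $\tfrac1{\epsilon^2}$ is true).

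For this, note first that $\eta$ restricted to $M_F$ is $\tfrac1\epsilon$-Lipschitz for the pseudometric $d_F$, since $x\mapsto d_F(x,S\cap V_F)$ is $1$-Lipschitz for $d_F$; as $d_F$ is dominated by the Riemannian distance on compact subsets of $M_F$, $\eta$ is differentiable a.e.\ there. It therefore suffices to prove the continuous analogue of Lemma~\ref{lemma:keyForLocal}: for a.e.\ $x\in M_F$, every $v\in T_xM$, and any $C^1$ curve $c$ with $c(0)=x$, $\dot c(0)=v$, one has $\limsup_{t\to 0}d_F(c(t),c(0))/|t|\le\Vert\bigl((d+i\boldsymbol{\alpha})F\bigr)_x(v)\Vert/\Vert F(x)\Vert$. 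Here the gauge invariance of $d_F$ (Proposition~\ref{pro:reasonfor metric}) is the decisive point. Trivialise the magnetic connection along $c$ by the holonomy factor $\gamma(t)=\exp\!\bigl(i\int_0^t\boldsymbol{\alpha}_{c(s)}(\dot c(s))\,ds\bigr)\in U(1)$ and put $G(t)=\gamma(t)F(c(t))$; a direct computation gives $G'(t)=\gamma(t)\,\bigl((d+i\boldsymbol{\alpha})F\bigr)_{c(t)}(\dot c(t))$ and $G(0)=F(x)$. Since $F(c(t))/\Vert F(c(t))\Vert$ and $F(c(0))/\Vert F(c(0))\Vert$ differ from $G(t)/\Vert G(t)\Vert$ and $G(0)/\Vert G(0)\Vert$ only by the unit scalar $\gamma(t)$, Proposition~\ref{pro:reasonfor metric} and the radial--projection inequality~(\ref{eq:keyest1}) give
\[
 d_F(c(t),c(0))\le\Bigl\Vert\tfrac{G(t)}{\Vert G(t)\Vert}-\tfrac{G(0)}{\Vert G(0)\Vert}\Bigr\Vert\le\frac{\Vert G(t)-G(0)\Vert}{\min(\Vert G(t)\Vert,\Vert G(0)\Vert)} ,
\]
and letting $t\to0$ yields the claim, since $\Vert G(t)\Vert,\Vert G(0)\Vert\to\Vert F(x)\Vert$ and $\Vert G(t)-G(0)\Vert=|t|\,\Vert\bigl((d+i\boldsymbol{\alpha})F\bigr)_x(v)\Vert+o(|t|)$. (Equivalently, since $d_F$, $\Vert F\Vert$ and $(d+i\boldsymbol{\alpha})F$ are all gauge invariant, one may assume $\boldsymbol{\alpha}(x)=0$ and invoke the pointwise identity $\Vert(d+i\boldsymbol{\alpha})F\Vert^2=\bigl|d\Vert F\Vert\,\bigr|^2+\Vert F\Vert^2\,\Vert(d+i\boldsymbol{\alpha})(F/\Vert F\Vert)\Vert^2$, which follows from Leibniz and $\mathrm{Re}\sum_j\overline{g_j}\,(d+i\boldsymbol{\alpha})g_j=0$ for $g=F/\Vert F\Vert$.)

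Combining the two steps, taking the supremum over unit $v$ gives $|d\eta(x)|\,\Vert F(x)\Vert\le\tfrac1\epsilon\sup_{|v|=1}\Vert\bigl((d+i\boldsymbol{\alpha})F\bigr)_x(v)\Vert\le\tfrac1\epsilon\Vert(d+i\boldsymbol{\alpha})F(x)\Vert$, the last inequality because the operator norm of the real-linear map $\bigl((d+i\boldsymbol{\alpha})F\bigr)_x\colon T_xM\to\mathbb{C}^n$ is at most its Hilbert--Schmidt norm, which equals $\Vert(d+i\boldsymbol{\alpha})F(x)\Vert$. Substituting into the first display proves~(\ref{eq:localization manifold}), with the constant $2(1+\tfrac1{\epsilon^2})\le 2(1+\tfrac4{\epsilon^2})$. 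On the nodal set $\{F=0\}$, which has measure zero by unique continuation for eigenfunctions of the elliptic operator $\Delta^{\boldsymbol{\alpha}}$ --- or, more elementarily, at its density points, where the Lipschitz map $\eta F$ vanishes and so has vanishing differential --- the inequality is immediate.

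The step I expect to be the main obstacle is the second one: turning the bound on the $d_F$-Lipschitz speed of $\eta$ --- where $d_F$ is an intrinsic pseudometric pulled back from a lens space or complex projective space --- into a bound by the Riemannian norm of $(d+i\boldsymbol{\alpha})F$, while simultaneously carrying the magnetic term $i\boldsymbol{\alpha}$. The resolution is that $d_F$ only sees the $U(1)$-orbit of $F$, so the magnetic holonomy along a curve is absorbed for free; this is precisely the property isolated in Proposition~\ref{pro:reasonfor metric}, which played the same role in the discrete localization lemma.
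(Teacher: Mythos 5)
Your proof is correct and follows essentially the same approach as the paper: a Leibniz split of $(d+i\boldsymbol{\alpha})(\eta F)$ into $(d\eta)\otimes F+\eta\,(d+i\boldsymbol{\alpha})F$, followed by the gauge-invariance of $d_F$ via the holonomy factor $\gamma(t)$ along a geodesic to bound $|d\eta|\Vert F\Vert$ by $\Vert(d+i\boldsymbol{\alpha})F\Vert$. Your only deviation is the use of the sharper radial-projection bound $\bigl\Vert\frac{G(t)}{\Vert G(t)\Vert}-\frac{G(0)}{\Vert G(0)\Vert}\bigr\Vert\leq\Vert G(t)-G(0)\Vert/\min(\Vert G(t)\Vert,\Vert G(0)\Vert)$ together with $\Vert G(t)\Vert,\Vert G(0)\Vert\to\Vert F(x)\Vert$ as $t\to 0$, which eliminates the paper's extra factor of $2$ (the paper uses the cruder $\leq 2\Vert G(t)-G(0)\Vert/\Vert G(0)\Vert$ since it is not known a priori which of $\Vert G(0)\Vert$, $\Vert G(t)\Vert$ is smaller) and hence gives the improved constant $2(1+1/\epsilon^2)\leq 2(1+4/\epsilon^2)$.
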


\begin{proof}
  If $F(x)=0$, the estimate (\ref{eq:localization manifold}) follows
  directly from $|\eta|\leq 1$. We therefore assume $F(x)\neq 0$
  in the following and set $f_{p,0}:=|f_p|$ for every $p\in [n]$. Then there
  is a real-valued function $\theta_p$ that is defined in a small neighborhood
  of $x\in M$ such that $f_p=f_{p,0}e^{i\theta_p}$. We now obtain at~$x$
  \begin{align}
    \Vert (d+i\boldsymbol{\alpha})(\eta F)\Vert^2
		=& \ \sum_{p\in [n]}
				|(d+i\boldsymbol{\alpha})(\eta f_{p,0}e^{i\theta_p})|^2 \notag\\
    	=& \ \sum_{p\in [n]}
				|f_{p,0}\; d\eta+\eta df_{p,0}+i(\eta f_{p,0})(\boldsymbol{\alpha}+d\theta_p)|^2 \notag\\
     \leq& \ \sum_{p\in [n]}
				\left( 2f_{p,0}^2|d\eta|^2+2|\eta|^2|df_{p,0}|^2+|f_{p,0}(\boldsymbol{\alpha}+d\theta_p)|^2 \right) \notag\\
    \leq & \ 2 |d\eta|^2 \sum_{p\in [n]} f_{p,0}^2
			 + 2 \sum_{p\in [n]} |df_{p,0}+if_{p,0}(\boldsymbol{\alpha}+d\theta_p)|^2 \notag\\
    	=& \ 2|d\eta|^2 \Vert F\Vert^2+2\Vert(d+i\boldsymbol{\alpha})F\Vert^2.
		\label{eq:loc waitforfurther}
  \end{align}
  There exist a unit tangent vector $\sigma'(0)\in T_xM$ such that
  \begin{equation}
    |d\eta(x)|=\lim_{t\to 0}\frac{|\eta(\sigma(t))-\eta(\sigma(0))|}{t},
  \end{equation}
  where $\sigma(t):=\mathrm{exp}_x(t\sigma'(0))$ is the geodesic with
  $\sigma(0)=x$. Since we have
  \begin{equation}
    |\eta(\sigma(t))-\eta(\sigma(0))|\leq
    \frac{1}{\epsilon}\cdot d_F(\sigma(t), \sigma(0)),
  \end{equation}
  we conclude
  \begin{equation}\label{eq:detaF}
    |d\eta(x)|\cdot \Vert F(x)\Vert\leq \frac{1}{\epsilon}\cdot\lim_{t\to 0}\frac{d_{F}(\sigma(t), \sigma(0))\cdot \Vert F(x)\Vert}{t}.
  \end{equation}
  Using (\ref{eq:metricManifoldcase}) and setting
  \begin{equation}
  \gamma(t):=e^{i\int_0^t\langle
    \boldsymbol{\alpha}(\sigma(t)), \sigma'(t)\rangle dt},
  \end{equation}
  we obtain
  \begin{align}
    d_{F}(\sigma(t), \sigma(0))\Vert F(x)\Vert
		\leq& \ \left\Vert \gamma(t)\frac{F(\sigma(t))}{\Vert F(\sigma(t))\Vert}-\frac{F(\sigma(0))}{\Vert F(\sigma(0))\Vert} \right\Vert
				\cdot \Vert F(x)\Vert \notag\\
    =& \ \left\Vert\frac{G(t)}{\Vert G(t)\Vert}-\frac{G(0)}{\Vert G(0)\Vert} \right\Vert
				\cdot \Vert G(0)\Vert,\label{eq:dFF}
  \end{align}
  where $G(t):=\gamma(t)F(\sigma(t))$. Now we can carry out similar
  estimates as in Lemma~\ref{lemma:keyForLocal}. Although we do not
  know whether $\Vert G(0)\Vert$ is smaller than $\Vert G(t)\Vert$, we
  still obtain
  \begin{align}
    \left\Vert\frac{G(t)}{\Vert G(t)\Vert}-\frac{G(0)}{\Vert G(0)\Vert}\right\Vert\Vert G(0)\Vert
		\leq & \ \left\Vert\frac{\Vert G(0)\Vert}{\Vert G(t)\Vert}\cdot G(t)-G(t)\right\Vert+\Vert G(t)-G(0) \Vert\notag\\
    	\leq & \ 2\cdot\Vert G(t)-G(0)\Vert.\label{eq:Gt}
  \end{align}
  Inserting (\ref{eq:dFF}) and (\ref{eq:Gt}) into (\ref{eq:detaF}), we
  obtain
  \begin{align}
    |d\eta(x)\, |\cdot\Vert F(x)\Vert
		\leq& \ \frac{2}{\epsilon} \cdot \lim_{t\to 0}\frac{\Vert G(t)-G(0)\Vert}{t} \notag\\
    	   =& \ \frac{2}{\epsilon} \cdot \lim_{t\to 0}\frac{\sqrt{\sum_{p\in [n]} |\gamma(t)f_p(\sigma(t))-\gamma(0)f_p(\sigma(0))|^2}}{t} \notag\\
    	   =& \ \frac{2}{\epsilon} \cdot \sqrt{\sum_{p\in [n]} \left|\lim_{t\to 0}\frac{\gamma(t)f_p(\sigma(t))-\gamma(0)f_p(\sigma(0))}{t}\right|^2} \notag\\
    	   =& \ \frac{2}{\epsilon} \cdot \sqrt{\sum_{p\in [n]} \left|\langle (d+i\boldsymbol{\alpha})f_p(x), \sigma'(0) \rangle\right|^2}.
  \end{align}
  In the last equality above, we used the fact that
  $\frac{d}{dt}|_{t=0}\gamma(t)=i\langle\boldsymbol{\alpha}(x),\sigma'(0)\rangle$. Since
  $|\sigma'(0)|=1$, we conclude
  \begin{equation}\label{eq:locmanifold final}
    |d\eta(x)|\cdot \Vert F(x)\Vert\leq \frac{2}{\epsilon}\Vert(d+i\boldsymbol{\alpha})F(x)\Vert.
  \end{equation}
  Combining (\ref{eq:locmanifold final}) and (\ref{eq:loc
    waitforfurther}), we finally obtain (\ref{eq:localization
    manifold}).
\end{proof}

Note that the pseudometric (\ref{eq:metricManifoldcase}) induced from
the metric on a complex projective space played an important role in
the proof.
\begin{proof}[Proof of Theorem \ref{thm:higherManifold}] Applying Theorem \ref{thm:decomposition} to $(M_F, d_F, \Vert F(x)\Vert^2dx)$, we obtain a subpartition $\{T_i\}_{[n]}$ of $M_F$, such that
\begin{itemize}
  \item [(i)] $d_F(T_p, T_q)\geq \frac{2}{C_0n^{5/2}}$, for all $p,q\in [n]$, $p\neq q$,
  \item [(ii)] $\int_{T_p}\Vert F(x)\Vert^2dx\geq \frac{1}{2n}\int_M\Vert F(x)\Vert^2dx$, for all $p\in [n]$,
  \end{itemize}
where $C_0$ is an absolute constant.
Employing further Lemma \ref{lemma:prepare for higher} and Lemma \ref{lemma:localization manifold}, the proof of the theorem can be done via the same arguments as in Section
\ref{section:proofof higher}.
\end{proof}


\section*{Acknowledgements}
We like to express our gratitude to Afonso S. Bandeira for pointing out
the relation between magnetic and connection Laplacians and useful references.
SL is very grateful to Alexander Grigor'yan for inspiring discussions about decompositions of spaces.
CL, SL and NP acknowledge the support of the EPSRC Grant EP/K016687/1
``Topology, Geometry and Laplacians of Simplicial Complexes''. CL also
acknow\-ledges the support of the SFB TRR109 ``Discretization in
Geometry and Dynamics'', the kind hospitality of the Department of
Mathematical Sciences of Durham University and of the Grey College.

\end{document}